\documentclass[reqno]{amsart}

\usepackage{stmaryrd}
\SetSymbolFont{stmry}{bold}{U}{stmry}{m}{n}
\usepackage{color}
\usepackage[dvipsnames]{xcolor}
\usepackage{ifpdf}
\ifpdf 
\usepackage[pdftex]{graphicx}   
\pdfcompresslevel=9 
\usepackage[pdftex,     
plainpages=false,   
breaklinks=true,    
colorlinks=true,
linkcolor=red,
citecolor=green,
pdftitle=My Document
pdfauthor=My Good Self
]{hyperref} 
\else 
\usepackage{graphicx}       
\usepackage{hyperref}       
\fi 

\hypersetup{colorlinks, linkcolor=red!50!black, citecolor=green!50!black, urlcolor=[rgb]{0.3, 0.2, 0.5}}

\usepackage{subfig}
\usepackage{glossaries}
\usepackage{glossary-mcols}

\usepackage{aurical}
\usepackage{amsfonts,amsmath}	
\usepackage{amssymb}
\usepackage{verbatim}
\usepackage{amsopn}
\usepackage[english]{babel}
\usepackage{amsthm}
\usepackage{enumerate}
\usepackage{mathrsfs}	
\usepackage{mathtools}
\usepackage{esint}
\usepackage{todonotes}
\usepackage{enumitem}

\usepackage{bm}
\usepackage{bbm}
\usepackage{caption}

\usepackage{graphicx}
\usepackage{subfig}

\usetikzlibrary{intersections,calc}

\captionsetup{format=hang,labelfont={sf,bf}}



\setlength{\topmargin}{-1cm}
\setlength{\textwidth}{16cm}
\setlength{\textheight}{23cm}
\setlength{\oddsidemargin}{0pt}
\setlength{\evensidemargin}{0pt}

\date{\today}


\theoremstyle{definition} \newtheorem{definition}{Definition}[section]
\theoremstyle{definition} \newtheorem{remark}[definition]{Remark}
\theoremstyle{plain} \newtheorem{lemma}[definition]{Lemma}
\theoremstyle{plain} \newtheorem{proposition}[definition]{Proposition}
\theoremstyle{plain} \newtheorem{theorem}[definition]{Theorem}
\theoremstyle{plain} \newtheorem{corollary}[definition]{Corollary}
\theoremstyle{definition} 
\theoremstyle{plain} 
\theoremstyle{definition} 
\theoremstyle{plain}

\DeclareMathOperator{\sign}{sign}
\DeclareMathOperator{\BV}{BV}
\DeclareMathOperator{\FV}{FV}

\let\d\relax
\newcommand{\d}{\partial}

\DeclareMathOperator{\dive}{div}

\DeclareMathOperator{\supp}{supp}

\DeclareMathOperator{\Lip}{Lip}

\DeclareMathOperator{\Tan}{{\mathsf {Tan}}}
\DeclareMathOperator{\ext}{{\mathsf {ext}}}

\newcommand{\R}{\mathbb{R}}

\newcommand{\N}{\mathbb{N}}

\newcommand{\e}{\varepsilon}
\newcommand{\eps}{\varepsilon}
\newcommand{\fhi}{\varphi}

\newcommand{\loc}{\text{\rm loc}}

\newcommand{\M}{\bm{M}}

\renewcommand{\L}{\mathscr L}

\newcommand{\1}{\mathbbm 1}

\newcommand{\meas}[1]{\left|#1\right|}

\usepackage{nicefrac}
\usepackage{faktor}

\renewcommand{\L}{\mathscr L}
\renewcommand{\H}{\mathscr H}
\newcommand{\rest}{\llcorner}
\renewcommand{\M}{\mathscr M}

\numberwithin{equation}{section}

\theoremstyle{plain} \newtheorem*{theorem*}{Theorem}
\theoremstyle{plain} 
\theoremstyle{plain} \newtheorem*{mthm*}{Main Theorem}
\theoremstyle{plain} \newtheorem*{conjecture*}{Conjecture}
\theoremstyle{plain} 
\theoremstyle{plain} \newtheorem*{problem*}{Problem}


\title[]{On the structure of divergence-free measures on $\R^2$}

\author{Paolo Bonicatto}
\address{Mathematics Institute, University of Warwick, Coventry CV4 7AL, UK}
\email{Paolo.Bonicatto@warwick.ac.uk}

\author{Nikolay A. Gusev}
\address{Moscow Institute of Physics and Technology,
	9 Institutskiy per., Dolgoprudny, Moscow Region, 141700;
	Steklov Mathematical Institute of Russian Academy of Sciences,
	8 Gubkina St, Moscow, 119991;}
\email{n.a.gusev@gmail.com}

\setcounter{tocdepth}{1}

\begin{document}
\begin{abstract}
We consider the structure of divergence-free vector measures on the plane. We show that such measures can be decomposed into measures induced by closed simple curves. More generally, we show that if the divergence of a planar vector-valued measure is a signed measure, then the vector-valued measure can be decomposed into measures induced by simple curves (not necessarily closed). As an application we generalize certain rigidity properties of divergence-free vector fields to vector-valued measures. Namely, we show that if a locally finite vector-valued measure has zero divergence, vanishes in the lower half-space and the normal component of the unit tangent vector of the measure is bounded from below (in the upper half-plane), then the measure is identically zero.\\

\noindent \textsc{Keywords}: \emph{vector-valued measures, divergence-free measures, Superposition Principle.} \\

\noindent \textsc{MSC (2010): 	49Q15, 58A25 }

\end{abstract}

	\maketitle

	\tableofcontents
	
\section{Introduction} 
In this paper we study the structure of vector-valued Borel measures $\bm \mu$ solving the equation 
\begin{equation}\label{e-div-mu-eq-rho}
\dive \bm \mu = \rho
\end{equation}
in the sense of distribution on $\R^d$, where $d \ge 1$ and $\rho$ is a given ($\R$-valued) Borel measure on $\R^d$. Many equations of the mathematical physics can be written in the form of \eqref{e-div-mu-eq-rho}, for instance the continuity equation.
A simple (but important) example of a measure satisfying \eqref{e-div-mu-eq-rho} is a \emph{measure $\bm \mu_\gamma$ induced by a Lipschitz curve $\gamma\colon [0,1]\to \R^d$}, which is defined (via Riesz-Markov-Kakutani Theorem) by 
\begin{equation}\label{e-mu-gamma}
\langle \bm \mu_\gamma, \Phi \rangle \equiv \int_{\R^d} \Phi \cdot \, d \bm \mu_\gamma := \int_0^1 \Phi(\gamma(t)) \cdot \gamma'(t) \, dt\qquad \forall \Phi \in C_0(\R^d;\R^d).
\end{equation}
Here $C_0(\R^d;\R^d)$ is the closure of the set of compactly supported continuous functions $C_c(\R^d;\R^d)$ with respect to the uniform norm. It is easy to see that $\bm \mu_\gamma$ solves \eqref{e-div-mu-eq-rho} with $\rho := \delta_{\gamma(0)} - \delta_{\gamma(1)}$,
where $\delta_x$ with $x\in \R^d$ denotes the Dirac measure concentrated at $x$. 

Clearly every finite linear combination of measures of the form $\bm \mu_\gamma$ still solves \eqref{e-div-mu-eq-rho}.
More generally, 
let $\Gamma:= \Lip([0,1];\R^d)$ denote the space of all Lipschitz functions $f\colon [0,1]\to \R^d$, endowed with the $\sup$-norm.
Let $\mathscr M(X;\R^d)$ denote the set of $\R^d$-valued Borel measures on a topological space $X$ (for $d=1$ we will simply write $\mathscr M(X) := \mathscr M(X;\R)$ and $\mathscr M_+(X)$ for the set of \emph{non-negative} Borel measures). Let $|\bm \mu|$ denote the total variation of $\bm \mu \in \mathscr M(\R^d;\R^d)$ and recall that $\|\bm \mu\|:= |\bm \mu|(\R^d)$ is a norm on $\mathscr M(\R^d;\R^d)$.
Suppose that $\eta \in \mathscr M_+(\Gamma)$ is such that $\int_\Gamma \|\bm \mu_\gamma\| \, d\eta(\gamma) < \infty$.
Then using Fubini's Theorem one can show that the measure \begin{equation*}
\bm \mu := \int_{\Gamma} \mu_\gamma \, d\eta(\gamma),
\end{equation*}
which is defined by
\begin{equation*}
\langle \bm \mu, \Phi \rangle := \int_{\Gamma} \langle \bm \mu_\gamma, \Phi \rangle \, d\eta(\gamma) \qquad \forall \Phi \in C_0(\R^d;\R^d),
\end{equation*}
solves \eqref{e-div-mu-eq-rho} with $\rho:= \int_{\Gamma} (\delta_{\gamma(0)} - \delta_{\gamma(1)})\, d\eta(\gamma)$ (which is defined similarly).
Therefore a natural question is whether the converse implication holds true, i.e. if any solution $\bm \mu \in \mathscr M(\R^d;\R^d)$ of \eqref{e-div-mu-eq-rho} (with some $\rho \in \mathscr M(\R^d)$) can be written as $\int_{\Gamma} \mu_\gamma \, d\eta(\gamma)$ for some $\eta \in \mathscr M_+(\Gamma)$. 

Decompositions of this kind were used in \cite{ST2017} in order to derive the so-called superposition principle for the measure-valued solutions of the continuity equation
(which was proved in \cite[Thm. 12]{umibologna} for Euclidean spaces).
In turn, such a superposition principle was used in \cite{BBG2016} in order to obtain certain uniqueness results for solutions of the continuity equation. The main result of the present paper can be stated as follows: 
 \begin{mthm*}\label{t-main}
Let $d=2$.
Suppose that $\rho \in \mathscr M(\R^d)$ and $\bm \mu \in \mathscr M(\R^d;\R^d)$ solve \eqref{e-div-mu-eq-rho}. Then there exists $\eta \in \mathscr M_+(\Gamma)$ such that
\begin{subequations}
\begin{equation}\label{e-mu-decomp}
\bm \mu = \int_{\Gamma} \bm \mu_\gamma \, d\eta(\gamma)
\end{equation}
\begin{equation}\label{e-|mu|-decomp}
|\bm \mu| = \int_{\Gamma} |\bm \mu_\gamma| \, d\eta(\gamma)
\end{equation}
\begin{equation}\label{e-|div-mu|-decomp}
|\dive \bm \mu| = \int_{\Gamma} |\dive \bm \mu_\gamma| \, d\eta(\gamma)
\end{equation}
\end{subequations}
and for $\eta$-a.e. $\gamma \in \Gamma$ there exists $\tilde\gamma\in \Gamma$ which is injective on $[0,1)$ such that $\bm \mu_\gamma = \bm \mu_{\tilde \gamma}$.
\end{mthm*}

For $d>2$ in general such a decomposition is not possible due to examples provided in the celebrated paper \cite{Smi93} (in particular one can consider $\bm \mu$ associated with an irrational winding of a torus).
However in \cite{Smi93} it was proved that for any $d>2$ the measure $\bm \mu$ can be decomposed into the so-called \emph{elementary solenoids} in such a way that \eqref{e-mu-decomp}--\eqref{e-|div-mu|-decomp} hold.
Recently this decomposition result was generalized for metric spaces in \cite{PS2012,PS2013}.
Note that for $d>2$ the set of elementary solenoids is strictly larger than the set of measures induced by Lipschitz curves. However, by \hyperref[t-main]{Main Theorem}, all elementary solenoids are induced by Lipschitz curves in the case $d=2$.

Following \cite{PS2012} $\bm \sigma \in \mathscr M(\R^d;\R^d)$ will be called \emph{a cycle of} $\bm \mu$ if $\dive \bm \sigma = 0$ and $\|\bm \mu\| = \|\bm \mu - \bm \sigma\| + \|\bm \sigma\|$.
Moreover $\bm \mu$ will be called \emph{acyclic} is $\bm \sigma=0$ is the only cycle of $\bm \mu$.
It is known \cite{Smi93,PS2012} that any measure $\bm \mu \in \mathscr M(\R^d;\R^d)$ can be decomposed into cyclic and acyclic parts (see e.g. Proposition 3.8 in \cite{PS2012}):

\begin{theorem}\label{t-maximal-subcycle}
For any $\bm \mu \in \mathscr M(\R^d;\R^d)$
there exists a cycle $\bm \sigma$ of $\bm \mu$ such that $\bm \mu - \bm \sigma$ is acyclic.
\end{theorem}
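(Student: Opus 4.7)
The plan is to obtain $\bm\sigma$ by a direct maximization argument. Set
\[
S := \sup\bigl\{\|\bm\sigma\| : \bm\sigma \text{ is a cycle of } \bm\mu\bigr\},
\]
which is well-defined (the zero measure is always a cycle) and satisfies $S \le \|\bm\mu\|$ by the defining identity. Choose a maximizing sequence $(\bm\sigma_n)\subset \mathscr M(\R^d;\R^d)$ of cycles of $\bm\mu$ with $\|\bm\sigma_n\|\to S$. Since $\|\bm\sigma_n\|\le \|\bm\mu\|$, the sequence is bounded, so by Banach--Alaoglu (identifying $\mathscr M(\R^d;\R^d)$ with the dual of $C_0(\R^d;\R^d)$) we may pass to a subsequence weakly-$*$ convergent to some $\bm\sigma \in \mathscr M(\R^d;\R^d)$.

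Next I would verify that the limit $\bm\sigma$ is itself a cycle of $\bm\mu$. The condition $\dive\bm\sigma = 0$ is immediate: for every $\varphi\in C_c^\infty(\R^d)$, one has $\langle \bm\sigma_n,\nabla\varphi\rangle =0$ and $\nabla\varphi \in C_0(\R^d;\R^d)$, so passing to the limit gives $\langle\bm\sigma,\nabla\varphi\rangle=0$. For the norm identity, combine lower semicontinuity of the total variation under weak-$*$ convergence with the identity $\|\bm\mu - \bm\sigma_n\| = \|\bm\mu\| - \|\bm\sigma_n\|$: this yields
\[
\|\bm\sigma\| + \|\bm\mu-\bm\sigma\| \le \liminf_n \|\bm\sigma_n\| + \liminf_n \|\bm\mu-\bm\sigma_n\| = S + (\|\bm\mu\|-S)=\|\bm\mu\|,
\]
while the reverse inequality is the triangle inequality. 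Hence $\|\bm\mu\|=\|\bm\sigma\|+\|\bm\mu-\bm\sigma\|$, and as a byproduct $\|\bm\sigma\| = S$.

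Finally I would check that $\bm\mu-\bm\sigma$ is acyclic by exploiting the maximality of $\|\bm\sigma\|$. If $\bm\tau$ is any cycle of $\bm\mu-\bm\sigma$, then $\dive(\bm\sigma+\bm\tau)=0$ and, chaining the two cycle identities,
\[
\|\bm\mu\| = \|\bm\sigma\|+\|\bm\mu-\bm\sigma\| = \|\bm\sigma\|+\|\bm\tau\|+\|\bm\mu-\bm\sigma-\bm\tau\|.
\]
Combined with the triangle inequality $\|\bm\sigma+\bm\tau\|\le\|\bm\sigma\|+\|\bm\tau\|$ this forces
\[
\|\bm\mu\|\ge \|\bm\sigma+\bm\tau\|+\|\bm\mu-\bm\sigma-\bm\tau\|,
\]
and the opposite inequality is again trivial, so $\bm\sigma+\bm\tau$ is a cycle of $\bm\mu$. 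By maximality $\|\bm\sigma+\bm\tau\|\le S = \|\bm\sigma\|$, and feeding this back into the chain of equalities yields $\|\bm\tau\|\le 0$, hence $\bm\tau=0$.

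The main obstacle is the middle step, where one has to upgrade lower semicontinuity of total variation into the \emph{equality} $\|\bm\mu\|=\|\bm\sigma\|+\|\bm\mu-\bm\sigma\|$. The trick is that one cannot simply add two $\liminf$'s, but here one has an exact \emph{identity} $\|\bm\mu-\bm\sigma_n\|=\|\bm\mu\|-\|\bm\sigma_n\|$ along the maximizing sequence, which converts the additive bound into the required one. All the remaining arguments are routine manipulations of the defining identity.
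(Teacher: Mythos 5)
Your proof is correct. Each of the three steps holds up: the maximizing sequence is bounded because $\|\bm\sigma_n\|\le\|\bm\mu\|$ follows from the cycle identity; divergence-freeness passes to the weak-$*$ limit by testing against $\nabla\varphi\in C_0(\R^d;\R^d)$; and the crucial upgrade from lower semicontinuity to equality works precisely because the cycle identity makes $\|\bm\mu-\bm\sigma_n\|=\|\bm\mu\|-\|\bm\sigma_n\|$ converge to $\|\bm\mu\|-S$, so the two separate $\liminf$'s add up to exactly $\|\bm\mu\|$ and are then squeezed by the triangle inequality. The acyclicity argument (that $\bm\sigma+\bm\tau$ is again a cycle of $\bm\mu$, whence $\|\bm\sigma\|+\|\bm\tau\|=\|\bm\sigma+\bm\tau\|\le S=\|\bm\sigma\|$) is also airtight.

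For context: the paper does not prove this statement, but simply cites Smirnov and Paolini--Stepanov (Proposition 3.8 of the latter). The standard route there is a maximal-subcycle argument, which your direct maximization via weak-$*$ compactness and lower semicontinuity realizes in a clean, self-contained way. The only thing worth emphasizing in a write-up is the point you already flag: one cannot in general add two $\liminf$'s to get an upper bound, and the argument only closes because the cycle identity converts the two $\liminf$'s into genuine limits summing to $\|\bm\mu\|$.
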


A curve $\gamma \in \Gamma$ will be called \emph{simple} if $\gamma$ is injective on $[0,1)$.
The acyclic part of $\bm \mu$ solving  \eqref{e-div-mu-eq-rho} can be decomposed into measures induced by simple Lipschitz curves (see e.g. Theorem 5.1 in \cite{PS2012}):
\begin{theorem}\label{t-acyclyc-decomposition}
If 
$\bm \mu$ is acyclic then there exists $\eta \in \mathscr M_+(\Gamma)$ such that \eqref{e-mu-decomp}--\eqref{e-|div-mu|-decomp} hold
and for $\eta$-a.e. $\gamma \in \Gamma$ there exists simple $\tilde\gamma\in \Gamma$ such that $\bm \mu_\gamma = \bm \mu_{\tilde \gamma}$.
\end{theorem}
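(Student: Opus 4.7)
\medskip
\noindent\textbf{Proof strategy.} The plan is to deduce the statement from Smirnov's decomposition of normal $1$-currents, invoke acyclicity to eliminate the genuinely solenoidal components, and finally run a loop-removal argument to secure simplicity.

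First, I would view $\bm\mu$ as a normal $1$-current on $\R^d$ with finite mass and boundary $\dive \bm\mu$, and apply the structural result of \cite{Smi93} (see also \cite{PS2012}): there exists a non-negative Borel measure $\tilde\eta$ on the space $\mathcal S$ of elementary solenoids such that the analogues of \eqref{e-mu-decomp}--\eqref{e-|div-mu|-decomp} hold, with $\bm\mu_\gamma$ replaced by the vector measure $\bm\mu_S$ associated with each elementary solenoid $S$. An elementary solenoid is either induced by a Lipschitz curve or has a non-trivial recurrent (cyclic) part. The acyclicity hypothesis on $\bm\mu$ rules out the latter: if $\tilde\eta$ charged a Borel set $\mathcal C \subset \mathcal S$ of solenoids with non-trivial cycle component, integrating their cycle parts would produce a non-trivial divergence-free measure $\bm\sigma$ satisfying $\|\bm\mu\| = \|\bm\mu - \bm\sigma\| + \|\bm\sigma\|$ (by the mass additivity contained in the analogue of \eqref{e-|mu|-decomp}), contradicting acyclicity. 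Hence $\tilde\eta$ is concentrated on curve-type solenoids, and through the natural identification this yields $\eta \in \mathscr M_+(\Gamma)$ satisfying \eqref{e-mu-decomp}--\eqref{e-|div-mu|-decomp}.

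To upgrade each $\gamma$ to a simple representative $\tilde\gamma$ with $\bm\mu_\gamma = \bm\mu_{\tilde\gamma}$, I would apply a loop-removal procedure: whenever $\gamma(s) = \gamma(t)$ with $s < t$, excise the loop $\gamma|_{[s,t]}$ and iterate (transfinitely, or via a Zorn-type maximal chain of excisions). The resulting $\tilde\gamma$ is injective on $[0,1)$; the equality of induced measures is preserved because each excised loop is itself divergence-free and, once superimposed via $\eta$, it cannot survive in the overall decomposition without violating acyclicity of $\bm\mu$. The measurability of the assignment $\gamma \mapsto \tilde\gamma$ is secured by the Kuratowski--Ryll-Nardzewski selection theorem applied to the set-valued map sending $\gamma$ to its admissible simple sub-parametrizations.

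The principal difficulty is in guaranteeing all three identities \eqref{e-mu-decomp}--\eqref{e-|div-mu|-decomp} simultaneously. Equation \eqref{e-mu-decomp} is automatic from linearity and \eqref{e-|div-mu|-decomp} follows from compatibility of the divergence with superposition, but \eqref{e-|mu|-decomp} is the rigid statement: it asserts the absence of cancellation between distinct curves when computing the total variation, and this is precisely where acyclicity must be used decisively. Without it, two curves traversing a common arc in opposite directions would cancel in $\bm\mu$ while their total variations would still add up in $\int |\bm\mu_\gamma|\,d\eta$, breaking \eqref{e-|mu|-decomp}.
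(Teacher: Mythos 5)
The paper does not prove Theorem~\ref{t-acyclyc-decomposition}: it is quoted directly as Theorem~5.1 of \cite{PS2012} (building on \cite{Smi93}), so the question is whether your sketch could stand on its own. The principal gap is in the loop-removal step. You assert that after excising loops the identity $\bm\mu_\gamma = \bm\mu_{\tilde\gamma}$ \emph{is preserved} ``because each excised loop is itself divergence-free and, once superimposed via $\eta$, it cannot survive in the overall decomposition without violating acyclicity.'' This conflates a curve-wise identity with an integrated one. Removing a non-degenerate loop $\ell$ from $\gamma$ strictly changes $\bm\mu_\gamma$ whenever $\bm\mu_\ell \neq 0$, so $\bm\mu_\gamma = \bm\mu_{\tilde\gamma}$ holds only if one first shows that for $\eta$-a.e.\ $\gamma$ \emph{every} loop of $\gamma$ carries zero measure. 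What acyclicity of $\bm\mu$ combined with \eqref{e-|mu|-decomp} can actually give is only the integrated statement $\int_\Gamma \|\bm\mu_{\ell_\gamma}\|\,d\eta(\gamma)=0$ for some measurably chosen single loop $\gamma\mapsto\ell_\gamma$; upgrading this to ``all loops of $\eta$-a.e.\ $\gamma$ are degenerate'' requires checking the disjointness $|\bm\mu_\gamma|=|\bm\mu_{\ell_\gamma}|+|\bm\mu_\gamma - \bm\mu_{\ell_\gamma}|$ so the mass additivity is inherited along the excision, a measurable iteration over the possibly uncountably many remaining loops, and a termination argument. ``Transfinite excision or a Zorn-type maximal chain'' names this difficulty but does not resolve it; this iteration is precisely the technical heart of Smirnov's Theorem~A and of \cite[Theorem~5.1]{PS2012} and cannot be outsourced to an unproved assertion.

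There is also an imprecision in the first step. Smirnov's \emph{elementary solenoids} are boundaryless by definition (they arise as long-time averages of a single Lipschitz trajectory), so a decomposition of a general normal current with $\dive\bm\mu\neq 0$ into elementary solenoids alone cannot satisfy \eqref{e-|div-mu|-decomp}. The version of Smirnov's structure theorem applicable to general normal currents decomposes them into \emph{simple curves with boundary} plus boundaryless elementary solenoids; but in that version the curve part is already simple, so your loop removal would be superfluous --- and you would in effect be citing the theorem you are trying to prove. The acyclicity argument you give for killing the boundaryless solenoids (the cyclic contribution integrates to a cycle of $\bm\mu$, which by \eqref{e-|mu|-decomp} and the equality case of the triangle inequality must then vanish) is correct in spirit, and corresponds to the cyclic/acyclic split already encapsulated in Theorem~\ref{t-maximal-subcycle}; it is, however, logically separate from and insufficient for the simplicity claim.
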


In view of Theorems~\ref{t-maximal-subcycle} and~\ref{t-acyclyc-decomposition} it is sufficient to prove \hyperref[t-main]{Main Theorem} for $\rho=0$. We provide two different proofs of this result.
Both proofs are based on a weak version of Poincar\'e Lemma: every divergence-free measure $\bm \mu$ in $\R^2$ can be represented as $\bm \mu =\nabla^\perp f$, where $\nabla^\perp = (-\d_2, \d_1)$ and $f \colon \R^2 \to \R$ is a locally integrable ``potential'' function with finite total variation.

The first proof (inspired by a remark in \cite{Smi93}) exploits functional analytic tools and relies on  Choquet's Theorem (see e.g. \cite{phelps}), in view of which it suffices to characterize the extreme points of the unit ball in the space of divergence-free measures. Using the weak version of Poincar\'e Lemma mentioned above we construct a certain space of functions with finite total variation, denoted by $\FV(\R^2)$, which is isometrically isomorphic (via the mapping $\nabla^\perp$) to the space of divergence-free measures. Then it remains to characterize the extreme points of the unit ball in $\FV(\R^2)$. In order to do this we apply the Coarea Formula and a fine analysis of sets of finite perimeter using the techniques from \cite{ACMM}. Eventually we show that the extreme points of the unit ball in $\FV(\R^2)$ are (normalized) characteristic functions of \emph{simple} sets (see Definition \ref{def:indecomposable} and Definition \ref{def:simple}). Using the results from \cite{ACMM} and \cite{BG}, we show that the divergence-free measures associated to extreme points are induced by closed simple Lipschitz curves.

In the second proof of \hyperref[t-main]{Main Theorem} we construct the appropriate measure $\eta$ directly. First we decompose the ``potential'' $f$ of $\bm \mu$ into a countable family of \emph{monotone} functions $f_k\in \FV(\R^2)$ using a modification of the result from \cite{BT} (which we prove in the Appendix).
Then we construct the desired measure $\eta_k$ for each component $f_k$ directly using the Coarea Formula and ultimately construct $\eta$ as the sum of $\eta_k$. An advantage of this approach is that it provides a more detailed description of the measure $\eta$ in view of monotonicity of $f_k$.

\subsection{Applications to rigidity properties of vector-measures} As an application of our decomposition of vector-measures into measures induced by curves, we establish a certain \emph{rigidity property} for vector-valued measures (extending one of the results from \cite{LeoSar17}). Let $\mathscr M_\loc(\R^d)$ denote the space of locally finite Borel measures on $\R^d$. Rigidity properties were introduced in the paper \cite{LeoSar17} to study fine properties of the \emph{trace} (in the Anzellotti's sense \cite{Anzellotti1983}) of bounded, divergence-free vector fields on a class of rectifiable sets. 
Here we consider the following generalization of Definition~1.1 from \cite{LeoSar17}: recall that, given $\bm \mu \in \mathscr M_\loc(\R^d;\R^d)$, by polar decomposition (see e.g. \cite{AFP}, Corollary 1.29) there exists a unique $\tau \in L^1_\loc(|\bm \mu|;\R^d)$ with $|\tau(x)| = 1$ for $|\bm \mu|$-a.e. $x\in \R^d$ such that $\bm \mu = \tau |\bm \mu|$. 

\begin{definition}\label{d-linear-rigidity}
  Let $\mathscr F \subset \mathscr M_\loc(\R^d;\R^d)$.
	We say that the \emph{linear rigidity property holds for $\mathscr F$} if for any $c>0$ and for any $\bm \nu\in \mathscr F$ such that 
	\begin{enumerate}
		\item[(i)] $\bm \nu\left(\left\{ x=(x_1, \ldots, x_d) \in \R^d: x_d \le 0 \right\}\right) = 0$;
		\item[(ii)] $\dive \bm \nu = 0$ in the distributional sense; 
		\item[(iii)] $\tau_d(x) \ge c |\tau(x)|$ for $|\bm \nu|$-a.e. $x \in \R^d$;
	\end{enumerate} 
one has that $\bm \nu =0$.
\end{definition}

For $\mathscr F$ consising of locally finite vector measures which are absolutely continuous with respect to Lebesgue measure (and have uniformly bounded density) the linear rigidity property was established in~\cite{LeoSar17}, Theorem~1.2.
Using the decomposition of vector measure into measures induced by curves we can prove the following result, which holds true in \emph{every} dimension:
\begin{theorem}\label{t-linear-rigidity}
For any $d\in \N$, the linear rigidity property holds for $\mathscr F = \mathscr M_\loc(\R^d;\R^d)$.
\end{theorem}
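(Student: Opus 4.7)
The plan is to combine the curve and elementary-solenoid decompositions mentioned in the introduction with the pointwise tangent bound~(iii) to force every elementary building block of $\bm\nu$ to be trivial. I would first split $\bm\nu$ into its cyclic and acyclic parts via Theorem~\ref{t-maximal-subcycle} and then treat each separately.

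Write $\bm\nu = \bm\sigma + \bm\mu_a$ where $\bm\sigma$ is a cycle of $\bm\nu$ and $\bm\mu_a := \bm\nu - \bm\sigma$ is acyclic; then $\dive \bm\mu_a = \dive \bm\nu - \dive \bm\sigma = 0$. The defining norm identity $\|\bm\nu\| = \|\bm\sigma\| + \|\bm\mu_a\|$, together with the subadditivity $|\bm\nu| \le |\bm\sigma| + |\bm\mu_a|$ of total variation, upgrades to the pointwise equality $|\bm\nu| = |\bm\sigma| + |\bm\mu_a|$ of positive measures. Comparing the polar decompositions of $\bm\nu$ and of $\bm\sigma + \bm\mu_a$ (using strict convexity of the Euclidean unit sphere) then shows that the unit tangents of $\bm\sigma$ and $\bm\mu_a$ both coincide with $\tau$ on the respective supports, so (i)--(iii) are inherited by each summand.

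Next I would apply Theorem~\ref{t-acyclyc-decomposition} to $\bm\mu_a$. Since $\dive \bm\mu_a = 0$, equation \eqref{e-|div-mu|-decomp} forces $\dive \bm\mu_\gamma = 0$, i.e.\ $\gamma(0) = \gamma(1)$, for $\eta$-a.e.\ $\gamma$. A standard disintegration argument based on \eqref{e-mu-decomp} and \eqref{e-|mu|-decomp} shows that the unit tangent $\gamma'/|\gamma'|$ of $\bm\mu_\gamma$ agrees $|\bm\mu_\gamma|$-a.e.\ with $\tau$, so $\gamma'_d(t) \ge c|\gamma'(t)| \ge 0$ for a.e.\ $t \in [0,1]$. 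Combined with the closedness relation $\int_0^1 \gamma'_d\,dt = \gamma_d(1) - \gamma_d(0) = 0$ this forces $\gamma' \equiv 0$ a.e., hence each $\bm\mu_\gamma = 0$ and $\bm\mu_a = 0$.

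It remains to prove $\bm\sigma = 0$. For $d=2$ the Main Theorem applies directly to $\bm\sigma$ with $\rho = 0$, and the argument of the previous paragraph runs verbatim. For arbitrary $d$ I would invoke Smirnov's decomposition~\cite{Smi93} of $\bm\sigma$ into elementary solenoids, still compatible with \eqref{e-mu-decomp}--\eqref{e-|div-mu|-decomp}; each solenoid $\bm S$ inherits the tangent condition and is realized as a weak-$\ast$ limit, as $T\to\infty$, of the normalized curve-averages $T^{-1}\bm\mu_{\gamma_T}$ associated with truncations $\gamma_T := \gamma|_{[0,T]}$ of a Lipschitz curve $\gamma:[0,\infty)\to\R^d$ satisfying $\gamma'_d \ge c|\gamma'|$ a.e. Monotonicity of $\gamma_d$ then bounds the arc length of $\gamma_T$ inside any horizontal strip $\{a<x_d<b\}$ by $(b-a)/c$ uniformly in $T$; weak-$\ast$ lower semicontinuity of total variation on open sets yields $|\bm S|(\{a<x_d<b\}) = 0$ for every $a<b$, so $\bm S = 0$ and $\bm\sigma = 0$. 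The main obstacle is precisely this last step in $d>2$, which requires Smirnov's structural description of elementary solenoids as curve-averages in order to turn the pointwise tangent inequality into the quantitative arc-length bound; in $d=2$ the Main Theorem of the present paper replaces that external input and keeps the entire argument self-contained.
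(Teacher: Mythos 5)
There is a genuine gap, and it sits exactly at the point your proposal skips over: the measure $\bm \nu$ is only assumed to lie in $\mathscr M_\loc(\R^d;\R^d)$, so it need not have finite total variation, while every decomposition you invoke --- Theorem~\ref{t-maximal-subcycle}, Theorem~\ref{t-acyclyc-decomposition}, the Main Theorem, and Smirnov's solenoidal decomposition --- is a statement about \emph{finite} vector measures. You never localize, and correspondingly you never use hypothesis (i) of Definition~\ref{d-linear-rigidity} (vanishing in the lower half-space). That hypothesis cannot be dispensed with: the measure $\bm \nu = e_d\,\L^d$ is locally finite, divergence-free and has $\tau \equiv e_d$, so it satisfies (ii) and (iii) but is nonzero. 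Hence any argument that does not use (i) cannot prove the theorem; your scheme only ever exploits (ii) and (iii). Note also that for \emph{finite} divergence-free measures the conclusion follows from (ii)--(iii) alone by testing the divergence with $\fhi(x)=\operatorname{atan}(x_d)$ (the mechanism of Corollary~\ref{cor:acyclic}), so the finite case --- the only case your decompositions cover --- is essentially trivial, and the real content of the theorem is precisely the locally finite case your proposal does not reach. A further, self-acknowledged gap is the $d>2$ step: transferring (iii) to the generating curve of an elementary solenoid and the normalized arc-length bound require structural facts from \cite{Smi93} that are neither stated in the paper nor proved in your sketch. (A smaller remark: since $\dive\bm\nu=0$, any acyclic part is automatically a cycle of itself and hence zero, so your whole treatment of $\bm\mu_a$ is vacuous rather than wrong.)

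The paper's route is different and avoids all of this. It multiplies $\bm\nu$ by a cutoff $f(x)=\omega(x/R)$ to obtain a finite measure $\bm\mu=f\bm\nu$ whose divergence is concentrated on the upper part of the annulus $\{R\le|x|\le 2R\}$ (here (i) guarantees no divergence is created below $\{x_d=0\}$). Condition (iii) then makes $\bm\mu$ \emph{acyclic} outright, by Corollary~\ref{cor:acyclic}, so only Theorem~\ref{t-acyclyc-decomposition} is needed, in every dimension --- no Main Theorem, no solenoids. The tangent identification of Lemma~\ref{lemma:orientation} and the cone condition $|\gamma_o'|\le c^{-1}\gamma_d'$ force every curve charging the truncated cone $T$ of \eqref{eq:def_T} to start inside $\overline T$; but the endpoints of the curves carry $|\dive\bm\mu|$, which by \eqref{e-|div-mu|-decomp} lives on the far-away annulus and is disjoint from $\overline T$. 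This contradiction, which is where (i) and the localization interact, yields $|\bm\mu|(T)=0$ and hence $\bm\nu=0$. If you want to salvage your approach, you must first localize; but then $\dive\bm\mu\ne 0$, the ``all curves are closed'' step disappears, and you are led back to controlling where the curve endpoints can lie --- i.e.\ to the paper's argument.
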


\section{Preliminaries and notation} 

In this section, we collect some useful and preliminary results and we set some notations that will be used throughout the paper. 

\subsection{General notation}
The $d$-dimensional Euclidean space will be denoted by $\R^d$, with $d \ge 1$. Usually, $\Omega \subset \R^d$ stands for a generic open set. The indicator (also characteristic) function of a set $A$ is denoted by $\1_A$ and the complement by $A^c$. The Lebesgue measure on $\R^d$ will be $\L^d$ while the $k$-dimensional Hausdorff measure, for $k \le d$, will be $\mathscr H^{k}$. If $(X, \Vert \cdot \Vert)$ is a normed space we will denote by $B_1^X$ the closed unit ball with center $0$ and radius $1$, i.e. 
\begin{equation*}
B_1^X := \{x \in X: \Vert x \Vert \le 1\}. 
\end{equation*}
If $U \subset X$, the notation $\bar U$ will denote the closure of $U$.
\noindent

If $\mu$ is a measure, the \emph{restriction of $\mu$} to some measurable subset $A$ is $\mu \rest_A$. The space of $p$-integrable functions (resp. locally $p$-integrable functions) on $\Omega$ will be denoted in the usual way by $L^p(\Omega)$ (resp. $L^p_\loc(\Omega)$), for $1\le p \le +\infty$, and the symbol $\Vert \cdot \Vert_p$ will stand for the usual norm in the former space. 

If $X$ is a topological space, $\mathscr M(X;\R^d)$ will denote the set of $\R^d$-valued Borel measures on~$X$. 
For $d=1$ let $\mathscr M(X) := \mathscr M(X;\R)$ and let $\mathscr M_+(X)$ denote the set of $[0,+\infty]$-valued Borel measures. For any~$\bm \mu \in \mathscr M(X;\R^d)$ let $|\bm \mu| \in \mathscr M_+(X)$ denote the associated total variation measure. Recall that
\begin{equation*}
\| \bm \mu \|_{\mathscr M} := |\bm \mu| (X)
\end{equation*}
is a norm on $\mathscr M(X; \R^d)$ with respect to which this space is complete (see e.g. \cite{bogachev}).

If $X$ is a locally compact and separable metric space then $\mathscr M(X;\R^d)$ can be identified (by Riesz-Markov-Kakutani Theorem) with the dual of $C_0(\R^d;\R^d)$, where $C_0(\R^d;\R^d)$ is the closure of the set of compactly supported continuous functions $C_c(\R^d;\R^d)$ with respect to the uniform norm. By default in this case we will endow $\mathscr M(X;\R^d)$ with the weak* topology. Note that the total variation norm on $\mathscr M(X;\R^d)$ coincides with the norm induced by duality with $C_0(X;\R^d)$ (see e.g. \cite[Thm. 1.54]{AFP}).

Recall also the definition of push-forward of a measure $\mu$ on some space $X$ through a Borel map $f \colon X \to Y$: we denote by $f_{\#} \mu$ the measure on $Y$ defined by $(f_{\#} \mu )(A):= \mu(f^{-1}(A))$ for any Borel set $A \subset Y$. It is well known that the measure $f_{\#} \mu$ satisfies the following equality for every bounded Borel function $\phi \colon Y \to \R$
\begin{equation}\label{eq:push_forward}
\int_X\phi(f(x)) \, d\mu(x) = \int_Y \phi(y)\,  d (f_{\#} \mu)(y). 
\end{equation}

\noindent
The \emph{divergence} of a vector-valued measure $\bm \mu \in \mathscr M(\R^d; \R^d)$ is understood in the sense of distributions, i.e. 
\begin{equation*}
\langle \dive \bm \mu, \phi \rangle := -\int_{\R^d} \nabla \phi(x) \, d \bm \mu(x) \qquad \forall  \phi \in C_c^\infty(\R^d).
\end{equation*}

\subsection{$\BV$ functions, perimeters, tangents} 
Let $\Omega \subset \R^d$ be an open set. 
\begin{definition}[$\BV$ functions, {\cite[Definition 3.1]{AFP}}] We say that a function $u \in L^1(\Omega)$ has \emph{bounded variation in $\Omega$} if the distributional derivative of $u$ is representable by a finite Radon measure in $\Omega$, i.e. 
	\begin{equation*}
	\int_{\Omega} u \frac{\partial \phi}{\partial x_i} \, dx = - \int_{\Omega} \phi d(D_iu) \quad \text{ for every } \phi \in C_c^\infty(\Omega) \, \text{ and for every } i= 1,\ldots, d 
	\end{equation*}
	for some $\R^d$-valued vector measure $(D_1u, \ldots, D_du)$ in $\Omega$. The space of functions of bounded variation in $\Omega$ is denoted by $\BV(\Omega)$.
\end{definition}

The space $\BV(\Omega)$ is a normed space under the norm 
\begin{equation*}
\Vert u \Vert_{\BV}:= \Vert u \Vert_{1} + \Vert D u \Vert_{\mathscr M}.
\end{equation*}

\begin{definition}[Variation, {\cite[Definition 3.4]{AFP}}]\label{def:variation} Let $u \in L^1_\loc(\Omega)$. The \emph{variation} $V(u, \Omega)$ of $u$ in $\Omega$ is defined by 
	\begin{equation*}
	V(u,\Omega):= \sup \left\{ \int_{\Omega} u(x) \dive\bm \phi(x) \, dx: \, \bm \phi \in C_c^\infty(\Omega;\R^d), \, \Vert \bm \phi \Vert_{\infty}\le 1 \right\}.
	\end{equation*}
\end{definition}

The variation enjoys several properties (see e.g. \cite[Remark 3.5]{AFP}): the map $u \mapsto V(u,\Omega)$ is l.s.c. in the $L^1_\loc(\Omega)$-topology. On the other hand, for fixed $u \in L^1_\loc(\Omega)$, it is possible to define $V(u,A)$ for any open set $A \subset \Omega$ and then, via the Cartheodory construction, extend $V(u,\cdot)$ to a Borel measure that will still be denoted by $V(u,\cdot)$.
Such measure has finite total variation in $\Omega$ if and only if $u \in \BV(\Omega)$ and in this case $V(u,\Omega)=\vert Du \vert(\Omega)$ (see \cite[Proposition 3.6]{AFP}). For simplicity, we will simply write $V(u)$ to denote the variation in the full space $V(u,\R^d)$. 

We recall that, as for Sobolev spaces, $\BV$ functions enjoy some higher  integrability properties: these are usually expressed via embedding theorems. For our purposes, the following general result will be needed. 

\begin{theorem}[$\BV$ embeddings, {\cite[Theorem 3.47]{AFP}}]\label{thm:347AFP}
	Let $d \ge 1$. For any function $u \in L^1_\loc(\R^d)$ satisfying $V(u) < \infty$ there exists a unique constant $m \in\R$ such that
	\begin{equation*}
	\Vert u - m \Vert_{L^{1^*}(\R^d)} \le \gamma V(u) 
	\end{equation*}
	for some universal constant $\gamma = \gamma(d)$, where
	\begin{equation}\label{eq:sobolev_conj}
	1^* := \begin{cases}
	\frac{d}{d-1}, & d>1 \\
	\infty, & d=1.
	\end{cases}
	\end{equation} 
	If $u \in L^1(\R^d)$ then $m = 0$, $u \in \BV(\R^d)$ and hence 	$\Vert u \Vert_{L^{1^*}(\R^d)} \le \gamma V(u)$. In particular, the embedding $\BV(\R^d) \hookrightarrow L^{1^*}(\R^d)$ is continuous.
\end{theorem}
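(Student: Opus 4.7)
\emph{Approach.} I would prove the embedding in three stages: first, derive the classical Gagliardo--Nirenberg inequality for $C^\infty_c$ functions; second, transfer it to $\BV(\R^d)$ by smooth approximation; third, treat the general $L^1_\loc$ case by extracting a suitable constant $m$ from means on large balls.

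\emph{Smooth case and the $\BV(\R^d)$ statement.} For $\fhi \in C^\infty_c(\R^d)$ with $d \ge 2$ I would run the standard iterated Hölder argument starting from $|\fhi(x)| \le \int_{-\infty}^{x_i} |\d_i \fhi|\,dt$ in each coordinate direction: multiplying these estimates over $i$ and integrating coordinate by coordinate yields $\|\fhi\|_{L^{d/(d-1)}(\R^d)} \le \gamma(d)\, \|\nabla\fhi\|_{L^1(\R^d)}$. For $d=1$ the fundamental theorem of calculus gives $\|\fhi\|_\infty \le \|\fhi'\|_1$ directly. To transfer this to $u \in \BV(\R^d)$, I would use the standard smooth approximation (Theorem 3.9 of \cite{AFP}): there exist $u_n \in C^\infty_c(\R^d)$ with $u_n \to u$ in $L^1(\R^d)$ and $\int_{\R^d} |\nabla u_n|\,dx \to V(u)$. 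Applying the smooth inequality to $u_n$ and passing to the limit by Fatou along an a.e.\ convergent subsequence yields $\|u\|_{L^{1^*}(\R^d)} \le \gamma(d)\, V(u)$. This already proves the last sentence of the theorem.

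\emph{The $L^1_\loc$ case.} Given $u \in L^1_\loc(\R^d)$ with $V(u) < \infty$, set $m_R := |B_R|^{-1} \int_{B_R} u\,dx$. A rescaling argument combined with a reflection extension across $\d B_R$ (first applied to mollifications $u * \rho_\e$ and then passed to the limit via lower semicontinuity of the variation) yields the scale-invariant Poincaré--Sobolev inequality
\begin{equation*}
\|u - m_R\|_{L^{1^*}(B_R)} \le \gamma(d)\, V(u, B_R) \le \gamma(d)\, V(u),
\end{equation*}
with $\gamma(d)$ independent of $R$. Restricting to $B_{R_1}$ with $R_1 < R_2$ and using the triangle inequality gives
\begin{equation*}
|m_{R_1} - m_{R_2}|\,|B_{R_1}|^{1/1^*} \le \|u - m_{R_1}\|_{L^{1^*}(B_{R_1})} + \|u - m_{R_2}\|_{L^{1^*}(B_{R_1})} \le 2\gamma(d)\, V(u),
\end{equation*}
so $\{m_R\}$ is Cauchy (using $|B_R|^{1/1^*} \to \infty$ since $d > 1$) and converges to some $m \in \R$. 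Applying Fatou's lemma to $|u - m_R|^{1^*} \1_{B_R}$ as $R \to \infty$ delivers the global bound $\|u - m\|_{L^{1^*}(\R^d)} \le \gamma(d)\, V(u)$. Uniqueness of $m$ follows because two admissible constants would differ by a nonzero constant lying in $L^{1^*}(\R^d)$, which is impossible. If in addition $u \in L^1(\R^d)$, then $m = u - (u - m) \in L^1(\R^d) + L^{1^*}(\R^d)$; since a nonzero constant lies in neither summand, $m = 0$, and combining $u \in L^1$ with $V(u) < \infty$ puts $u \in \BV(\R^d)$. The case $d=1$ is handled directly via the right-continuous $\BV$ representative, setting $m := \lim_{x \to -\infty} u(x)$ and using $|u(x) - m| \le V(u)$ pointwise.

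\emph{Main obstacle.} The genuinely delicate step is securing the scale-invariant Poincaré--Sobolev inequality on balls with a constant independent of $R$. One clean route is to work on cubes $Q_R$ instead of balls: odd reflection across the faces extends a mean-zero $\BV$ function to a compactly supported $\BV$ function on a larger cube (doubling the variation at each reflection), after which the smooth Gagliardo--Nirenberg inequality applies. Once scale invariance is in place, the rest of the argument is routine limiting.
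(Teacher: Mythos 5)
This is a recalled result: the paper states Theorem~\ref{thm:347AFP} with a citation to \cite{AFP} and gives no proof, so there is no internal argument to compare against. Your proof is the standard textbook one (Gagliardo--Nirenberg for $C_c^\infty$, approximation in variation for the $\BV(\R^d)$ case, then the scale-invariant Poincar\'e--Sobolev inequality on balls to produce $m$ as the limit of the means $m_R$), and the main line is sound: the Cauchy estimate for $\{m_R\}$, the Fatou passage to the global bound, and the identification $m=0$ when $u\in L^1$ (a nonzero constant is not in $L^1+L^{1^*}$ for $d>1$) are all correct. Two small caveats. First, in your closing remark the odd reflection of a mean-zero $\BV$ function across a face does \emph{not} merely double the variation: it creates a jump on the interface of size twice the boundary trace, which is controlled by $\Vert u\Vert_{\BV(Q)}$ via the trace theorem rather than by $V(u,Q)$ alone; the standard repair is an even reflection (or a bounded extension operator) combined with the $L^1$ Poincar\'e inequality to absorb the lower-order term, and scale invariance survives because the Poincar\'e constant scales like $R$ while the cutoff gradient scales like $R^{-1}$. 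Second, for $d=1$ (where $1^*=\infty$) the constant $m$ is genuinely not unique under the stated inequality (e.g.\ $u=\1_{[0,1]}$ admits several admissible $m$), so the uniqueness assertion really pertains to $d>1$; your separate treatment of $d=1$ is the right instinct, but it proves existence of a natural $m$, not uniqueness.
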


\begin{definition}[Sets of finite perimeter, {\cite[Definition 3.35]{AFP}}] A Lebesgue measurable set $E \subset \R^d$ is said to be of \emph{finite perimeter in} $\Omega \subset \R^d$ if the variation of $\1_E$ in $\Omega$ is finite and the perimeter of $E$  in $\Omega$ is 
	\begin{equation*}
	P(E,\Omega):= V(\1_E, \Omega). 
	\end{equation*}
\end{definition}
We recall also the \emph{Coarea Formula} for functions of bounded variation (see \cite[Theorem 3.40]{AFP}): for any function $u \in L^1(\Omega)$ it holds  
\begin{equation*}
V(u,\Omega) := \int_{\R} P(\{x \in \Omega: u(x)>t\}) \, dt
\end{equation*}
where the equality is understood in the sense that RHS is finite if and only if the LHS is finite and in this case their values coincide and $u \in \BV(\Omega)$. 

We also recall the \emph{isoperimetric inequality}, see \cite[Theorem 3.46]{AFP}: for any set $E\subset \R^d$, $d>1$, of finite perimeter either $E$ or $\R^d\setminus E$ has finite Lebesgue measure and there exists a universal constant $c=c(d)$ such that 
\begin{equation}\label{eq:isoperimetry}
\min\{\L^d(E), \L^d(\R^d \setminus E)\} \le c(d) P(E)^{\nicefrac{d}{d-1}}
\end{equation}

We now recall the definition of \emph{approximate tangent space} to a rectifiable set. Let $k \in \N$ with $k \ge 1$. If $\mu$ is a Radon measure on $\R^d$ and $E \subset \R^d$ is a Borel we define, for $x \in \R^d$ and $r>0$, 
\begin{equation*}
\mu_{x,r}(E) := (\Phi_{x,r})_{\#} \mu(E), \qquad \text{ where } \qquad \Phi_{x,r}(y) := \frac{y-x}{r}.
\end{equation*}
If $M$ is a locally $\mathscr H^k$-rectifiable set then we define the \emph{approximate tangent space to $M$ at $x$}, denoted by $\Tan(M,x)$, to be the set of limit points of the measures $r^{-k} \mu_{x,r}$ as $r \downarrow 0$ in the weak-$*$ topology. 
It is possible to prove (see, e.g. \cite[Theorem 10.2]{Maggi}) that for $\H^k$-a.e. $x \in M$ there exists a unique $k$-plane $\pi_x$ such that $\Tan(M,x) =\{ \H^k \rest_{\pi_x} \}$. 
We further emphasize that the approximate tangent space to a smooth set is related to the ordinary tangent space, in the sense of differential geometry. More precisely, we have the following
\begin{proposition}[{\cite[Proposition 2.88]{AFP}}]\label{prop:tang_smooth}
	Let $\phi \colon \R^k \to \R^d$ be a one-to-one Lipschitz function and let $D \subset \R^k$ be a $\L^k$-measurable set. Then $E = \Phi(D)$ satisfies 
	\begin{equation*}
	\Tan(E,x) = \{ \H^k \rest_{ d\phi_{\phi^{-1}(x)}(\R^k)}  \} \quad \text{ for } \H^{k} \text{-a.e. } x \in E, 
	\end{equation*}
	where $d\phi$ is the usual differential of $\phi$.
\end{proposition}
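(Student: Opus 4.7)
The plan is to combine Rademacher's theorem, the area formula for Lipschitz maps, and a blow-up argument. First I would identify a ``good'' subset $S\subset D$ of full $\mathscr L^k$-measure on which the pointwise geometry behaves nicely: at every $y_0\in S$ the map $\phi$ is classically differentiable (Rademacher), the Jacobian $J\phi(y_0):=\sqrt{\det(d\phi_{y_0}^T d\phi_{y_0})}$ is strictly positive so that $d\phi_{y_0}$ has full rank $k$, the point $y_0$ is a point of Lebesgue density one for $D$, and $y_0$ is a Lebesgue point of $J\phi\in L^\infty(\R^k)$. By the area formula for Lipschitz maps together with injectivity of $\phi$, one has $\mathscr H^k(\phi(\{y\in D: J\phi(y)=0\}))=0$, so the set of ``good'' points $x=\phi(y_0)$ with $y_0\in S$ exhausts $\mathscr H^k$-almost all of $E$. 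It therefore suffices to compute the blow-up at such a point.

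Fix $x_0=\phi(y_0)$ with $y_0\in S$ and a test function $\varphi\in C_c(\R^d)$. Setting $\mu=\mathscr H^k\rest_E$ and using the area formula followed by the change of variables $z=y_0+rw$, I would rewrite
\begin{equation*}
\int_{\R^d}\varphi\, d\bigl(r^{-k}\mu_{x_0,r}\bigr)
=r^{-k}\!\int_{D}\varphi\!\left(\frac{\phi(z)-\phi(y_0)}{r}\right)\!J\phi(z)\,dz
=\int_{(D-y_0)/r}\!\!\varphi(\psi_r(w))\,J\phi(y_0+rw)\,dw,
\end{equation*}
where $\psi_r(w):=(\phi(y_0+rw)-\phi(y_0))/r$. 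The three ingredients of my good-point list now do the work: by differentiability $\psi_r\to d\phi_{y_0}$ uniformly on compact sets as $r\downarrow 0$; by the Lebesgue-point property $J\phi(y_0+r\,\cdot)\to J\phi(y_0)$ in $L^1_{\mathrm{loc}}(\R^k)$; and by density one $\mathbf 1_{(D-y_0)/r}\to 1$ in $L^1_{\mathrm{loc}}(\R^k)$. A dominated-convergence argument (with the uniform bound $J\phi\le \Lip(\phi)^k$) then yields
\begin{equation*}
\int_{\R^d}\varphi\,d\bigl(r^{-k}\mu_{x_0,r}\bigr)\;\longrightarrow\;J\phi(y_0)\int_{\R^k}\varphi(d\phi_{y_0}(w))\,dw.
\end{equation*}
Finally, the area formula applied to the \emph{linear} injective map $L=d\phi_{y_0}\colon\R^k\to \R^d$ (whose Jacobian is the constant $J\phi(y_0)$) identifies the right-hand side with $\int_{L(\R^k)}\varphi\,d\mathscr H^k$, which gives exactly weak$^*$ convergence of $r^{-k}\mu_{x_0,r}$ to $\mathscr H^k\rest_{d\phi_{y_0}(\R^k)}$ and hence the desired description of $\Tan(E,x_0)$.

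The main obstacle that needs care is controlling the integrand in the region where $|w|$ is large: the compact support of $\varphi$ only forces $\psi_r(w)\in B_R(0)$, which, via the global Lipschitz estimate, does not on its own bound $|w|$. To handle the tail, I would use that $d\phi_{y_0}$ being injective of full rank provides $|d\phi_{y_0}(w)|\ge c|w|$ for some $c>0$; combined with differentiability this yields $|\psi_r(w)|\ge (c/2)|w|$ on $|w|\le \delta/r$ for all sufficiently small $r$, confining the support of the integrand for moderate $|w|$ to a fixed ball $\{|w|\le 2R/c\}$. For the truly large range $|w|\ge \delta/r$, the area formula applied to the injective Lipschitz map $\psi_r$ bounds the contribution by $\|\varphi\|_\infty \mathscr H^k(B_R\cap \psi_r(\R^k))$ uniformly in $r$, and this remainder can be absorbed by considering $\varphi$ supported in arbitrarily small balls (and appealing to locally finite $\mathscr H^k$-measure of $E$, which is the only place the local rectifiability hypothesis on $E$ is used). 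Once this tail control is in place, the dominated-convergence step goes through and the proof is complete.
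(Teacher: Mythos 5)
The paper does not actually prove this statement---it is quoted verbatim from \cite[Proposition 2.88]{AFP}---so there is no in-paper argument to compare against and I can only judge your proof on its own terms. The skeleton (Rademacher plus the area formula plus a blow-up at a point that is simultaneously a differentiability point with $J\phi>0$, a density-one point of $D$ and a Lebesgue point of $J\phi$) is the standard and correct route; the moderate-range computation, including the identification of $J\phi(y_0)\int\varphi(d\phi_{y_0}w)\,dw$ with $\int_{d\phi_{y_0}(\R^k)}\varphi\,d\H^k$ via the area formula for the linear map, is fine, as is the confinement $|\psi_r(w)|\ge (c/2)|w|$ on $|w|\le\delta/r$. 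You also correctly isolate the one genuine difficulty: parameters $z$ far from $y_0$ whose images $\phi(z)$ nevertheless land in $B_{Rr}(x_0)$ (injectivity of $\phi$ gives no continuity of $\phi^{-1}$, so this really happens, e.g.\ for spiralling curves).

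Your resolution of that difficulty, however, does not work as written. The tail is bounded by $\|\varphi\|_\infty\, r^{-k}\H^k\bigl(\phi(D\setminus B_\delta(y_0))\cap B_{Rr}(x_0)\bigr)$, and what must be proved is that this tends to $0$ as $r\downarrow 0$. The bound \virgolette{$\|\varphi\|_\infty\H^k(B_R\cap\psi_r(\R^k))$ uniformly in $r$} is essentially the same quantity (it contains the main term as well), and mere boundedness is useless; likewise \virgolette{considering $\varphi$ supported in arbitrarily small balls} changes nothing, since shrinking $R$ only replaces $B_{Rr}(x_0)$ by a smaller concentric ball while $r\to 0$ already does that. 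The missing ingredient is a density theorem: for an $\H^k$-measurable set $A$ with $\H^k\rest_A$ locally finite one has $\lim_{r\to 0}r^{-k}\H^k(A\cap B_r(x))=0$ for $\H^k$-a.e.\ $x\in\R^d\setminus A$ (see \cite[Theorem 2.56]{AFP}). Apply this to $A_{\delta_n}:=\phi(D\setminus B_{\delta_n}(y_0))$ for a fixed countable sequence $\delta_n\downarrow 0$ (so the exceptional null sets can be discarded once and for all before choosing $y_0$), noting that $x_0=\phi(y_0)\notin A_{\delta_n}$ precisely because $\phi$ is injective: this is what makes the tail vanish. (Alternatively, the upper density bound $\Theta^{*k}(\H^k\rest_E,x)\le 1$ a.e.\ on $E$ from the same theorem plus a mass-accounting argument works, but some such density input is unavoidable.) A final caveat: local finiteness of $\H^k\rest_E$ is genuinely needed and is not automatic from the hypotheses as literally stated (an injective Lipschitz curve of infinite length can accumulate inside a bounded region); in this paper it is harmless because the proposition is only applied to reduced boundaries, which have finite $\H^{d-1}$-measure.
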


\subsection{Fine properties of sets of finite perimeter}
Given a Lebesgue measurable set $E \subseteq \R^d$ we define the \emph{upper/lower densities at $x$} by
\begin{equation*}
\overline{D}(E,x) := \limsup_{r\to 0} \frac{\L^d(E \cap B_r(x))}{\L^d(B_r(x))},\quad \quad \underline{D}(E,x) := \liminf_{r\to 0} \frac{\L^d(E \cap B_r(x))}{\L^d(B_r(x))},
\end{equation*}
and $D(E,x)$ denotes the common value of $\underline{D}(E,x)$ and $\overline{D}(E,x)$ whenever they are equal. In particular, we will denote by $E^t$, for $t \in [0,1]$, the set of points of density $t$
\begin{equation*}
E^{t} := \left\{x \in \R^d \;|\; D(E,x) =t \right\}.
\end{equation*} 

The \emph{essential exterior of $E$} is $E^{0}$ and the \emph{essential interior of $E$} is $E^{1}$. Ultimately, the \emph{essential boundary of $E$} is 
\begin{equation*}
\partial^e E := \R^d \setminus (E^{0} \cup E^{1}).
\end{equation*}

Following \cite[Definition 3.54]{AFP} we define the \emph{reduced boundary} of a set $E \subset \R^d$ to be the set of points $x \in \supp |D\1_E|$ such that the limit 
\begin{equation*}
\nu_E(x):= \lim_{r \downarrow 0} \frac{ D\1_E(B_r(x)) }{|D\1_E|(B_r(x)) }
\end{equation*}
exists in $\R^d$ and satisfies $|\nu_E(x)|=1$. We will denote by $\mathcal FE$ the reduced boundary and the function $\nu_E\colon \mathcal FE \to \mathbb S^{d-1}$ is called \emph{generalized inner normal} to $E$. 

The celebrated De Giorgi's Theorem can thus be stated as follows: 

\begin{theorem}[De Giorgi, {\cite[Theorem 3.59]{AFP}}] \label{thm:de_giorgi} Let $E$ be a Lebesgue measurable subset of $\R^d$ of finite perimeter in $\R^d$. Then $\mathcal F E$ is countably $(d-1)$-rectifiable and $\vert D\1_E \vert = \H^{d-1}\rest_{\mathcal F E}$. In addition, the approximate tangent space to $E$ at $x$ coincide with the orthogonal hyperplane to $\nu_E(x)$ for $\H^{d-1}$-a.e. $x \in \mathcal F E$, i.e. 
	\begin{equation*}
	\Tan(\mathcal F E,x) = \nu_E^\perp(x).   
	\end{equation*}
\end{theorem}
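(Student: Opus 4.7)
The plan is to follow De Giorgi's classical blow-up strategy. Fix $x \in \mathcal F E$ and set $\nu := \nu_E(x)$. Introduce the rescaled sets $E_{x,r} := (E - x)/r$ and study the limit of $\1_{E_{x,r}}$ as $r \downarrow 0$ in $L^1_\loc(\R^d)$. The goal is to show that every blow-up limit coincides with the half-space $H_\nu := \{y \in \R^d : y \cdot \nu \leq 0\}$, from which the rectifiability of $\mathcal F E$, the identity $|D\1_E| = \H^{d-1}\rest \mathcal F E$, and the tangent-space formula will all follow simultaneously.

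The first and most delicate step is to establish uniform density bounds at every $x \in \mathcal F E$: there exist constants $\alpha(d), \beta(d), \gamma(d) > 0$ such that, for all sufficiently small $r > 0$,
\[
\alpha(d)\, r^{d-1} \leq |D\1_E|(B_r(x)) \leq \beta(d)\, r^{d-1}, \qquad \gamma(d) \leq \frac{\L^d(E \cap B_r(x))}{\L^d(B_r(x))} \leq 1 - \gamma(d).
\]
The upper perimeter bound comes by comparing $E$ with $E \cup B_r(x)$ and $E \setminus B_r(x)$, reducing to a trace estimate on the spheres $\d B_r(x)$. The lower bounds use the defining property $D\1_E(B_r(x))/|D\1_E|(B_r(x)) \to \nu$ of the reduced boundary together with the isoperimetric inequality \eqref{eq:isoperimetry} applied to $E \cap B_r(x)$ and to its complement.

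From these densities, BV compactness extracts a sequence $r_k \downarrow 0$ along which $\1_{E_{x,r_k}} \to \1_F$ in $L^1_\loc(\R^d)$ for some set $F$ of locally finite perimeter. The choice of $\nu$ in the definition of $\mathcal F E$ and lower semicontinuity of the variation force $D\1_F(B_R) = |D\1_F|(B_R)\,\nu$ for every $R > 0$; in other words the generalized inner normal of $F$ equals $\nu$ almost everywhere. A Liouville-type rigidity argument then identifies $F$ with the half-space $H_\nu$ (up to a translation along $\nu^\perp$, which is ruled out by the volume density $1/2$ at the origin coming from the previous step). This classification of sets of locally finite perimeter with $|D\1_F|$-a.e.\ constant generalized normal as half-spaces is the step I expect to be the main obstacle, since it combines the compactness, the subtle normalization built into the definition of $\mathcal F E$, and a nontrivial rigidity argument.

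Once the blow-up is identified, the conclusions follow. Convergence $\1_{E_{x,r}} \to \1_{H_\nu}$ in $L^1_\loc(\R^d)$ upgrades, by BV lower semicontinuity matching the upper bound of the second step, to weak-$*$ convergence of the rescaled perimeter measures, giving
\[
\lim_{r \downarrow 0} \frac{|D\1_E|(B_r(x))}{r^{d-1}} = \H^{d-1}(\d H_\nu \cap B_1) \qquad \text{at every } x \in \mathcal F E.
\]
Combined with the Besicovitch differentiation theorem and a Federer-type density criterion for rectifiability (the $(d-1)$-upper density of $|D\1_E|$ is comparable to that of $\H^{d-1}$ on $\mathcal F E$ and vanishes off it), this simultaneously yields countable $(d-1)$-rectifiability of $\mathcal F E$ and the identity $|D\1_E| = \H^{d-1} \rest \mathcal F E$. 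Finally, the same blow-up convergence transferred to the measures $r^{-(d-1)} (\Phi_{x,r})_{\#} |D\1_E|$ identifies their weak-$*$ limit with $\H^{d-1} \rest \nu^\perp$, which is precisely the statement $\Tan(\mathcal F E, x) = \nu_E(x)^\perp$ for $\H^{d-1}$-a.e.\ $x \in \mathcal F E$.
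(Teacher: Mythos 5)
This is a classical result that the paper quotes from \cite[Theorem 3.59]{AFP} without proof, so there is no internal argument to compare against; your outline is precisely the standard De Giorgi blow-up proof used in that reference (uniform density estimates at points of $\mathcal F E$, compactness and classification of blow-up limits as half-spaces, then rectifiability and the identity $|D\1_E|=\H^{d-1}\rest_{\mathcal F E}$ via a density criterion and Besicovitch differentiation). The plan is sound as a roadmap, with the caveat that the two genuinely hard steps (the density estimates and the rigidity of sets with constant generalized normal) are named rather than executed, and that the normalization of the blow-up half-space through the origin is most cleanly obtained from the lower perimeter-density bound, which forces $0\in\supp|D\1_F|$, rather than from a volume density equal to $1/2$ --- at that stage of the argument the volume density is only known to lie in $[\gamma(d),1-\gamma(d)]$, the exact value $1/2$ being a consequence of the blow-up identification rather than an input to it.
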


The link between the reduced boundary, the essential boundary and the set of points of density $1/2$ is a remarkable theorem, due to Federer (see \cite[Theorem 3.61]{AFP}):
\begin{theorem}[Federer]\label{thm:de_giorgi_federer}
	If $E \subset \R^d$ has finite perimeter then 
	\begin{equation*}
	\mathcal F E \subset E^{1/2} \subset \partial^e E
	\end{equation*}
	and $\H^{d-1}(\partial^e E \setminus E^{1/2})=0$.
\end{theorem}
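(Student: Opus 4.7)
The plan is to handle the three assertions separately. The inclusion $E^{1/2} \subset \partial^e E$ is immediate from the definitions, since if $D(E,x)=1/2$ then $x \notin E^0 \cup E^1$, so $x \in \partial^e E$. For $\mathcal F E \subset E^{1/2}$, fix $x \in \mathcal F E$ and set $\nu := \nu_E(x)$. The blow-up analysis underlying De Giorgi's Theorem~\ref{thm:de_giorgi} shows that the rescaled sets $E_r := (E-x)/r$ converge in $L^1_\loc(\R^d)$ as $r \downarrow 0$ to the half-space $H_\nu^- := \{y \in \R^d : y \cdot \nu \le 0\}$; testing this convergence against $\1_{B_1(0)}$ yields
\begin{equation*}
\frac{\L^d(E \cap B_r(x))}{\L^d(B_r(x))} \;\xrightarrow[r \downarrow 0]{}\; \frac{\L^d(H_\nu^- \cap B_1(0))}{\L^d(B_1(0))} = \frac{1}{2},
\end{equation*}
so $D(E,x) = 1/2$ and hence $x \in E^{1/2}$.

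It remains to prove $\H^{d-1}(\partial^e E \setminus E^{1/2}) = 0$. In view of the first inclusion, $\partial^e E \setminus E^{1/2} \subset \partial^e E \setminus \mathcal F E$, so it is enough to show $\H^{d-1}(\partial^e E \setminus \mathcal F E) = 0$. The cornerstone is a uniform lower density estimate
\begin{equation*}
\liminf_{r \downarrow 0} \frac{|D\1_E|(B_r(x))}{r^{d-1}} \ge \theta_d > 0 \qquad \text{for every } x \in \partial^e E,
\end{equation*}
where $\theta_d$ depends only on $d$. I would derive this from the relative isoperimetric inequality applied to $E$ inside balls $B_r(x)$: since $x \in \partial^e E$, neither density $\overline D(E,x)$ nor $\overline D(E^c,x)$ is zero, so both $\L^d(E \cap B_r(x))$ and $\L^d(B_r(x) \setminus E)$ stay comparable to $r^d$ along a sequence $r \downarrow 0$, and the localized form of \eqref{eq:isoperimetry} then forces $|D\1_E|(\overline{B_r(x)}) \gtrsim r^{d-1}$ along that sequence.

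Finally, I would invoke the standard Radon-measure density comparison lemma (see e.g.\ \cite[Theorem 2.56]{AFP}): if a finite Borel measure $\mu$ on $\R^d$ satisfies $\limsup_{r \downarrow 0} \mu(B_r(x))/r^{d-1} \ge t$ for every $x$ in a Borel set $S$, then $t\,\H^{d-1}(S) \le C_d\,\mu(S)$ for a dimensional constant $C_d$. Taking $\mu := |D\1_E|$, which by Theorem~\ref{thm:de_giorgi} equals $\H^{d-1} \rest \mathcal F E$, and $S := \partial^e E \setminus \mathcal F E$, one has $\mu(S) = 0$ since $S \cap \mathcal F E = \emptyset$, whence $\H^{d-1}(\partial^e E \setminus \mathcal F E) = 0$. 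The hard part is the uniform lower bound: extracting a single constant $\theta_d$ valid at \emph{every} essential boundary point requires a careful case analysis of the densities of $E$ and $E^c$ at $x$, combined with the coarea formula applied to the radial function $r \mapsto \L^d(E \cap B_r(x))$ to relate volumes of slices to perimeters of the truncations $E \cap B_r(x)$, and to select the right radii on which the relative isoperimetric inequality yields the desired quantitative estimate.
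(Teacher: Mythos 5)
The paper does not actually prove this statement --- it quotes it from \cite[Theorem 3.61]{AFP} --- so your proposal is to be measured against the standard textbook argument, and its architecture is exactly that: the two inclusions (trivial one, plus blow-up at reduced boundary points), followed by a lower density estimate for $|D\1_E|$ on the essential boundary and a density-comparison lemma. The two inclusions are handled correctly.

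The genuine gap is in your ``cornerstone'' estimate. The bound $\liminf_{r\downarrow 0}|D\1_E|(B_r(x))/r^{d-1}\ge\theta_d>0$, with $\theta_d$ depending only on $d$ and valid at \emph{every} $x\in\partial^e E$, is false. Membership in $\partial^e E$ only guarantees $\overline D(E,x)>0$ and $\overline D(E^c,x)>0$, and these upper densities can be arbitrarily small, which degenerates the perimeter density. Concretely, in $\R^2$ take $E=\bigcup_j B_{\rho_j}(y_j)$ with $y_j=(2^{-j},0)$ and $\rho_j=\e\,2^{-j}$: then $0\in\partial^e E\setminus\mathcal F E$ while $|D\1_E|(B_r(0))/r$ is of order $\e$, so no dimensional constant can work; replacing the scales $2^{-j}$ by a lacunary sequence such as $2^{-j!}$ even forces the $\liminf$ to vanish while keeping $0\in\partial^e E$. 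Such points do not contradict Federer's theorem (they form an $\H^{d-1}$-null set), but your argument needs the estimate at every point of the very set whose nullity you are trying to prove, so you cannot discard them a priori. The correct and standard statement, which the relative isoperimetric inequality in balls yields after applying the intermediate value theorem to the continuous function $r\mapsto\L^d(E\cap B_r(x))/\L^d(B_r(x))$, is the \emph{limsup} bound $\limsup_{r\downarrow 0}|D\1_E|(B_r(x))/r^{d-1}\ge c\bigl(d,\overline D(E,x),\overline D(E^c,x)\bigr)>0$, with a point-dependent constant. This still suffices: set $S:=\partial^e E\setminus\mathcal F E$ and $S_n:=\{x\in S:\limsup_{r\downarrow 0}|D\1_E|(B_r(x))/r^{d-1}\ge 1/n\}$, so that $S=\bigcup_n S_n$; the density-comparison lemma (which is stated for upper densities anyway) gives $\H^{d-1}(S_n)\le C_d\,n\,|D\1_E|(S_n)=0$ because $|D\1_E|=\H^{d-1}\rest_{\mathcal F E}$ and $S_n\cap\mathcal F E=\emptyset$, and summing over $n$ concludes. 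So your plan is salvageable, but the uniform $\liminf$ estimate as you state it would fail and must be replaced by the limsup, point-dependent version together with this countable decomposition.
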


In particular, if $E\subset \R^d$ has finite perimeter then $\H^{d-1}(E^{1/2}) = \H^{d-1}(\mathcal F E) < \infty$. However it is known (see e.g. \cite[Theorem 6, (2)]{koliada}) that the condition $\H^{d-1}(E^{1/2})<\infty$ is not sufficient for $E\subset \R^d$ to have finite perimeter.

\begin{remark}
	Taking into account De Giorgi's Theorem \ref{thm:de_giorgi} we can write Coarea Formula for a function $u \in \BV(\R^d)$ in the following way (see e.g. \cite[Formula (3.63)]{AFP}: 
	\begin{equation}\label{eq:coarea_borel}
	|Du|(B) = \int_{\R} \H^{d-1}\left(\partial^e \{u \ge t\} \cap B\right) \, dt \qquad \text{ for every Borel set } B\subseteq \R^d.  
	\end{equation}
\end{remark}

\subsection{Indecomposable and simple sets}
From \cite{ACMM} we recall the following definitions. 
\begin{definition}[Decomposable and indecomposable sets]\label{def:indecomposable}
	A measurable set $E\subseteq \R^d$ of finite perimeter is called \emph{decomposable} if there exist two measurable sets $A,B\subseteq \R^d$ with strictly positive measure	such that $E = A \cup B$, $A \cap B = \emptyset$ and $P(E) = P(A) + P(B)$. A set $E$ which is not decomposable is called \emph{indecomposable}.
\end{definition}

Any set with finite perimeter can be decomposed into at most countably many indecomposable sets {\cite[Theorem 1]{ACMM}}: 

\begin{theorem}[Decomposition theorem]\label{thm:ACMM}
	Let $E$ be a set with finite perimeter in $\R^d$. Then there exists a unique (up to permutations) at most countable family of pairwise disjoint indecomposable sets $\{E_i\}_{i\in I}$ such that $\L^d(E_i)>0$, $E=\bigcup_{i\in I} E_i$ and $P(E) = \sum_{i} P(E_i)$. 
	Moreover, for any indecomposable $F \subseteq E$ with $\L^d(F)>0$ there exists a unique $j\in I$ such that $\L^d(F\setminus E_j)=0$.
\end{theorem}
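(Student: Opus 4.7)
The plan is to construct the decomposition by a maximal/exhaustion argument: I extract disjoint indecomposable pieces of $E$ with positive measure and with additivity of perimeter, until nothing of positive measure is left over. The whole argument is powered by two observations: the budget $P(E)<\infty$, and a strict positive lower bound on the perimeter of each nontrivial indecomposable piece.

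For the second observation, if $F\subseteq\R^d$ is indecomposable with $0<\L^d(F)$ and $\L^d(\R^d\setminus F)>0$, then $P(F)>0$: otherwise $D\1_F=0$ would force $\1_F$ to coincide a.e. with a constant, contradicting the assumption. Iterating the additivity in Definition~\ref{def:indecomposable}, any disjoint family $\{F_j\}$ of indecomposable subsets of $E$ arising from successive admissible splittings satisfies $\sum_j P(F_j)\le P(E)<\infty$; combined with $P(F_j)>0$ this makes any such family at most countable. An application of Zorn's lemma to the poset of such families ordered by refinement (upper bounds along chains obtained by passing to unions and using lower semicontinuity of the perimeter in $L^1_\loc$) then produces a maximal family $\{E_i\}_{i\in I}$.

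Setting $R:=E\setminus\bigcup_i E_i$, additivity yields $P(R)<\infty$. If $\L^d(R)>0$, the isoperimetric inequality~\eqref{eq:isoperimetry} ensures that $R$ is a genuine set of finite perimeter, and I would extract from $R$ an indecomposable subset $R_0$ with $0<\L^d(R_0)$ and $P(R)=P(R_0)+P(R\setminus R_0)$ by repeating the splitting procedure inside $R$ and passing to the limit, which would contradict the maximality of $\{E_i\}$. Hence $\L^d(R)=0$, so $E=\bigcup_i E_i$ up to a null set and $P(E)=\sum_i P(E_i)$. For uniqueness, given any indecomposable $F\subseteq E$ with $\L^d(F)>0$, the pairwise disjoint family $\{F\cap E_i\}_{i\in I}$ satisfies $P(F)=\sum_i P(F\cap E_i)$ by Federer's Theorem~\ref{thm:de_giorgi_federer} together with the additivity for $\{E_i\}$; indecomposability of $F$ then forces all but one intersection $F\cap E_j$ to have zero Lebesgue measure, which yields the desired $\L^d(F\setminus E_j)=0$.

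The main technical obstacle I anticipate is the passage from the finite additivity built into Definition~\ref{def:indecomposable} to the countable additivity $P(E)=P(R)+\sum_i P(E_i)$ in the limit; this amounts to checking that the reduced boundaries $\mathcal F E_i$ are pairwise $\H^{d-1}$-essentially disjoint and that no perimeter mass cancels along infinitely many interfaces. The required fine analysis of boundaries relies on Theorems~\ref{thm:de_giorgi} and~\ref{thm:de_giorgi_federer} recalled in this section.
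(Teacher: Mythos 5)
The paper does not prove this statement: it is quoted verbatim from \cite[Theorem~1]{ACMM}, so there is no internal proof to compare against. Judged on its own merits, your exhaustion-via-Zorn plan has a genuine gap, and it is not a cosmetic one. You correctly observe that each indecomposable piece $F$ with $0<\L^d(F)$ and $\L^d(F^c)>0$ has $P(F)>0$, and that a perimeter budget $P(E)<\infty$ then caps the number of admissible pieces; the uniqueness/maximality argument at the end (split $F$ as $(F\cap E_j)\cup(F\setminus E_j)$ and use subadditivity to squeeze out $P(F)=P(F\cap E_j)+P(F\setminus E_j)$) is also essentially right, \emph{provided} one already has $P(F)=\sum_i P(F\cap E_i)$. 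The trouble is with the existence construction.

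First, the Zorn setup is not well-founded. You order ``families arising from successive admissible splittings'' by refinement and take upper bounds of chains by ``passing to unions''; but the union of a refining chain of partitions is neither a partition nor, in general, an admissible family, and the bound $\sum_j P(F_j)\le P(E)$ is simply false for an arbitrary disjoint family of indecomposable subsets of $E$ (take $E=(0,1)\subset\R$ and $F_j=(1/(j+1),1/j)$: each $P(F_j)=2$). That bound is only available for families along which perimeter is countably additive against the remainder, which is exactly the obstacle you flag at the end but do not resolve. Second, and more seriously, the step that would contradict maximality is circular: you propose to extract from the remainder $R$ an indecomposable $R_0\subseteq R$ with $\L^d(R_0)>0$ and $P(R)=P(R_0)+P(R\setminus R_0)$ by ``repeating the splitting procedure and passing to the limit'' --- but producing such an $R_0$ from an arbitrary set of positive measure and finite perimeter is precisely the content of the theorem (and of the Extraction Lemma~\ref{lemma:extraction}, which in this paper is itself proved \emph{using} Theorem~\ref{thm:ACMM}). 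Nothing prevents a splitting sequence from fragmenting $R$ into pieces whose measures all tend to zero, in which case the limit yields no indecomposable piece at all. The proof in \cite{ACMM} sidesteps both problems by a variational selection: one maximizes a strictly concave functional (of the form $\sum_i \L^d(E_i)^{\alpha}$ with $\alpha\in(0,1)$) over admissible countable partitions with $\sum_i P(E_i)\le P(E)$, proving existence of a maximizer by compactness in $\BV$ and lower semicontinuity, and then showing each piece of a maximizer must be indecomposable. To repair your argument you would need either to import that variational mechanism, or to prove the countable additivity and the extraction step directly, neither of which your write-up does.
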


\begin{definition}
	The sets $E_i$ defined above are called the $M$-connected components of $E$. The set $\{E_i\}_{i\in I}$ is denoted by $\mathcal{CC}^M(E)$, without loss of generality $I\subseteq\{0,1,2,\ldots\}$ and $0\in I$.
\end{definition}

By Theorem \ref{thm:ACMM} the $M$-connected components of $E$ are maximal in the following sense: any indecomposable $F \subseteq E$ with $\L^d(F)>0$ is contained in exactly one of the $M$-connected components of $E$, up to Lebesgue negligible subsets. We refer the reader to \cite{ACMM} for a comparison between indecomposability and the topological notion of connectedness.

The statement of Decomposition theorem can be slightly strengthened with the following simple result from \cite[Proposition~3]{ACMM} (see also Equation (10) in \cite[Remark~1]{ACMM}):

\begin{proposition}\label{p-decomposition-th+}
Let $E \subseteq \R^d$ be a set with finite perimeter. Let $\mathcal{CC}^M(E) = \{E_i\}_{i\in I}$, where $I$ is at most countable. Then $P(\bigcup_{i\in I_1 \cup I_2} E_i) = P(\bigcup_{i\in I_1} E_i) + P(\bigcup_{i\in I_2} E_i)$for any disjoint sets $I_1, I_2 \subseteq I$.
\end{proposition}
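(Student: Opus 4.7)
The plan is to combine the identity $P(E) = \sum_{i \in I} P(E_i)$ from Theorem~\ref{thm:ACMM} with the countable subadditivity of the perimeter on disjoint families, so that an \emph{a priori} chain of inequalities is forced to collapse into a chain of equalities.

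First I would partition the index set by setting $I_3 := I \setminus (I_1 \cup I_2)$ and defining $F_k := \bigcup_{i \in I_k} E_i$ for $k \in \{1,2,3\}$. Because $\{E_i\}_{i\in I}$ is pairwise disjoint and consists of sets of finite perimeter, I expect the countable subadditivity
\begin{equation*}
P(F_k) \le \sum_{i \in I_k} P(E_i), \qquad k = 1,2,3,
\end{equation*}
to hold; this I would prove by applying the sublinearity of $V(\cdot)$ to the finite truncations $F_k^{(N)} := \bigcup_{i \in I_k,\, i \le N} E_i$ (via the pointwise identity $\1_{F_k^{(N)}} = \sum_{i \in I_k,\, i \le N} \1_{E_i}$) and passing to the limit as $N\to\infty$ using the $L^1_\loc$-lower semicontinuity of $V$ recalled after Definition~\ref{def:variation}. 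Applying the same subadditivity to the splitting $E = (F_1 \cup F_2) \cup F_3$ will give $P(E) \le P(F_1 \cup F_2) + P(F_3) \le P(F_1) + P(F_2) + P(F_3)$.

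Chaining these estimates with Theorem~\ref{thm:ACMM} produces
\begin{equation*}
P(E) \le P(F_1 \cup F_2) + P(F_3) \le \sum_{k=1}^3 P(F_k) \le \sum_{k=1}^3 \sum_{i \in I_k} P(E_i) = \sum_{i \in I} P(E_i) = P(E),
\end{equation*}
so every inequality above is an equality and, subtracting the finite quantity $P(F_3)$, I recover the desired identity $P(F_1 \cup F_2) = P(F_1) + P(F_2)$. I do not anticipate a substantial obstacle: the heart of the argument is just the collapsing-inequalities trick, and the only ingredient beyond Theorem~\ref{thm:ACMM} is the countable subadditivity, which is routine from sublinearity and lower semicontinuity of $V(\cdot)$, with the finiteness $P(E)<\infty$ ensuring that all partial sums stay bounded.
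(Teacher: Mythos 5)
Your proof is correct and, since the paper itself only cites this fact (from \cite[Proposition 3]{ACMM} and \cite[Remark 1, Eq.\ (10)]{ACMM}) without supplying an argument, you are in fact filling a gap. The collapsing-inequality argument is sound: the countable subadditivity $P(F_k) \le \sum_{i\in I_k} P(E_i)$ follows from the pointwise identity $\1_{F_k^{(N)}} = \sum_{i\in I_k,\, i\le N} \1_{E_i}$ (available because the $E_i$ are pairwise disjoint), the triangle inequality for $V$, and the $L^1_\loc$-lower semicontinuity of the variation; the finiteness $P(F_3) \le P(E) < \infty$ guarantees that you may indeed subtract $P(F_3)$ at the end. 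It is worth noting that ACMM's own route is different in flavor: their Equation (10) first establishes the stronger structural fact that the reduced boundaries $\mathcal{F} E_i$ are pairwise $\H^{d-1}$-essentially disjoint and their union recovers $\mathcal{F} E$ up to an $\H^{d-1}$-null set, from which the additivity of perimeters over arbitrary subfamilies drops out immediately. Your argument is more elementary and purely measure-theoretic, but it only yields the scalar identity and not the underlying boundary decomposition; ACMM's approach gives more information at the cost of a finer analysis of the essential boundary.
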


\begin{definition}[Holes, saturation, simple sets]\label{def:simple}
	Let $E$ be an indecomposable set. Any $M$-connected component of $\R^d \setminus E$ with finite measure is called a \emph{hole} of $E$. The saturation $\mathop{\mathrm{sat}} (E)$ of $E$ is defined as union of $E$ and all its holes. The set $E$ is called \emph{saturated} if $\mathop{\mathrm{sat}} (E) = E$. Any indecomposable saturated subset of $\R^d$ is called \emph{simple}.
\end{definition}

Observe that simple sets are necessarily of finite perimeter; for $d>1$, the only simple set $E$ with $\L^d(E) = \infty$ is $E= \R^d$.

\subsection{Further facts on indecomposable and simple sets}

We finally collect in this paragraph some useful, different characterization of indecomposable and of simple sets. 
We begin by considering indecomposable sets and we present a lemma which will be useful later.

\begin{lemma}\label{lemma:equivalence_hausdorff}
	Let $F \subseteq E \subset \R^d$ be two sets of finite perimeter. Then 
	\begin{equation}\label{eq:assumptions}
	\partial^e F \subseteq \partial^e E \mod \H^{d-1} \iff \H^{d-1}(\partial^e F \cap E^1) = 0.  
	\end{equation}
	Furthermore, if $E$ is indecomposable and one (hence both) of \eqref{eq:assumptions} holds true, then $\L^d(F)=0$ or $\L^d(E \setminus F)=0$. 
\end{lemma}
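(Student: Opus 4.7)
For the equivalence, the plan is to exploit the density relations forced by $F \subseteq E$. The first observation is that $F \subseteq E$ implies $\overline D(F,x) \le \overline D(E,x)$ everywhere, hence $E^0 \subseteq F^0$ and consequently $\partial^e F \cap E^0 = \emptyset$. For the direction ($\Rightarrow$), since $E^1 \cap \partial^e E = \emptyset$ by definition, any $x \in \partial^e F \cap E^1$ automatically lies in $\partial^e F \setminus \partial^e E$, which is $\H^{d-1}$-negligible by hypothesis. Conversely, if $\H^{d-1}(\partial^e F \cap E^1) = 0$, the identity $\partial^e F \setminus \partial^e E = \partial^e F \cap (E^0 \cup E^1) = \partial^e F \cap E^1$ (using $\partial^e F \cap E^0 = \emptyset$) yields the inclusion mod $\H^{d-1}$.

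For the second part, my strategy is to show that $F$ and $G := E \setminus F$ provide a valid decomposition of $E$ in the sense of Definition \ref{def:indecomposable}, so that indecomposability forces one of them to be Lebesgue negligible. The main identity to establish is $P(E) = P(F) + P(G)$. The key step is to prove $\H^{d-1}(\partial^e F \cap \partial^e G) = 0$: at $\H^{d-1}$-a.e.\ point in this intersection, Federer's Theorem~\ref{thm:de_giorgi_federer} ensures that both $F$ and $G$ have density $1/2$, and since $E = F \sqcup G$ densities add pointwise wherever they exist, giving $D(E,x) = 1$. Hence such a point lies in $\partial^e F \cap E^1$, which is $\H^{d-1}$-negligible by hypothesis.

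Once this is established, I would verify $\partial^e E = \partial^e F \cup \partial^e G$ mod $\H^{d-1}$. The inclusion $\supseteq$ follows from the hypothesis $\partial^e F \subseteq \partial^e E$ mod $\H^{d-1}$ together with the analogous fact $\partial^e G \subseteq \partial^e E$ mod $\H^{d-1}$, which I would derive by the same density argument: a point in $\partial^e G \cap E^1$ has $D(G,x)=1/2$ by Federer, hence $D(F,x)=1/2$, placing it in $\partial^e F \cap E^1$. For $\subseteq$, if $x \in \partial^e E \setminus \partial^e F$ then the density of $F$ at $x$ must be $0$ (it cannot be $1$ since $F \subseteq E$ and $x \notin E^1$), so the densities of $G$ and $E$ coincide at $x$, forcing $x \in \partial^e G$. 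Combining disjointness mod $\H^{d-1}$ with this set identity, De Giorgi's Theorem~\ref{thm:de_giorgi} gives
\[
P(E) = \H^{d-1}(\partial^e E) = \H^{d-1}(\partial^e F) + \H^{d-1}(\partial^e G) = P(F) + P(G).
\]
Since $F$ and $G$ are measurable, disjoint, and have finite perimeter (note $P(G) \le P(E) + P(F) < \infty$), the indecomposability of $E$ forces $\L^d(F) = 0$ or $\L^d(G) = 0$.

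The main obstacle will be the careful handling of density issues at boundary points where a priori the density need not exist; the trick is to use Federer's theorem everywhere to restrict attention to the full-$\H^{d-1}$-measure set where densities take values in $\{0,1/2,1\}$, which is exactly the regime where the additivity $D(E,\cdot) = D(F,\cdot) + D(G,\cdot)$ produces the desired rigid constraints.
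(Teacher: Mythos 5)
Your proof is correct and its engine is the same as the paper's: use Federer's theorem to restrict to the $\H^{d-1}$-full measure set where densities of $F$, $G:=E\setminus F$, and $E$ take values in $\{0,1/2,1\}$, then exploit the pointwise additivity $D(E,\cdot)=D(F,\cdot)+D(G,\cdot)$ to derive rigid constraints. The one place where you diverge is in how you organize the second half. The paper proves only the single inclusion $\partial^e G\subseteq\partial^e E\setminus\partial^e F$ mod $\H^{d-1}$, which already yields $\H^{d-1}(\partial^e G)\le\H^{d-1}(\partial^e E)-\H^{d-1}(\partial^e F)$, i.e.\ $P(G)+P(F)\le P(E)$; the reverse inequality is then free from subadditivity of perimeter, so the equality $P(E)=P(F)+P(G)$ drops out without any further set-theoretic analysis. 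You instead establish the full decomposition $\partial^e E=\partial^e F\cup\partial^e G$ mod $\H^{d-1}$ together with essential disjointness of $\partial^e F$ and $\partial^e G$, and read off the perimeter equality directly. This is a bit more work — in particular you need the extra inclusion $\partial^e G\subseteq\partial^e E$ mod $\H^{d-1}$, and the $\subseteq$ direction requiring a case analysis on the value of $D(F,x)$ for $x\in\partial^e E\setminus\partial^e F$ — but it produces a slightly stronger structural statement, namely a genuine essential partition of $\partial^e E$. Both routes are sound; the paper's is the more economical one if the perimeter equality is all you need.
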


\begin{proof}
	Let us show first the equivalence. First notice that from $F \subset E$, together with the monotonicity of the Lebesgue measure, we deduce $E^0 \subset F^0$. Hence, the following equalities hold modulo $\H^{d-1}$: 
	\begin{equation}\label{eq:inclusioni_hausd}
	\partial^e F = (\partial^e F \cap E^1) \cup (\partial^e F \cap E^0) \cup (\partial^e F \cap \partial^eE) = (\partial^e F \cap E^1) \cup (\partial^e F \cap \partial^eE) .
	\end{equation}
	From \eqref{eq:inclusioni_hausd} we easily get the equivalence: on the one hand, if $\partial^e F \subseteq \partial^e E$  then we must have 
	\begin{equation*}
	 (\partial^e F \cap E^1) \cup (\partial^e F \cap \partial^eE) = \partial^e F \subset \partial^e E \qquad \mod \H^{d-1}
	\end{equation*}
	and therefore the only possibility is that $\H^{d-1}(\partial^e F \cap E^1) = 0$. 
	Viceversa, if $\H^{d-1}(\partial^e F \cap E^1) = 0$, from \eqref{eq:inclusioni_hausd} we get 
	\begin{equation*}
	\partial^e F = (\partial^e F \cap E^1) \cup (\partial^e F \cap \partial^eE) = (\partial^e F \cap \partial^eE) \subset \partial^e E \qquad \mod \H^{d-1}
	\end{equation*}
	which is what we wanted. 
	Let us now turn to prove that there are no non-trivial subsets $F \subset E$ satisfying the conditions \eqref{eq:assumptions} if $E$ is indecomposable. Let $F \subseteq E$ be a set of finite perimeter with $\partial^e F \subseteq \partial^e E \mod \H^{d-1}$. Then it is easy to check that 
	\begin{equation}\label{eq:inclusion_diff}
	\partial^e(E \setminus F) \subset \partial^e E \setminus \partial^e F \mod \H^{d-1}.
	\end{equation}
	Let us show \eqref{eq:inclusion_diff}: on the one hand, it is clear	that $\partial^e(E \setminus F) \subset \partial^e E$. On the other hand, we show that 
	\begin{equation*}
	\H^{d-1}(\partial^e(E \setminus F) \cap \partial^e F) = 0.
	\end{equation*}
	Indeed, $\H^{d-1}$-a.e. $x \in \partial^e(E \setminus F)$ satisfies $D(E\setminus F,x) = \frac{1}{2}$ by De Giorgi-Federer's Theorem \ref{thm:de_giorgi_federer}. Similarly $\H^{d-1}$-a.e. $x \in \partial^e F$ satisfies $D(F,x) = \frac{1}{2}$: thus, for $\H^{d-1}$-a.e. $x \in (\partial^e(E \setminus F) \cap \partial^e F)$ we have 
	\begin{equation*}
	D(E,x) = D(F,x) + D(E\setminus F,x) = \frac{1}{2}+\frac{1}{2}=1
	\end{equation*} 
	which contradicts the fact that $\partial^e(E \setminus F) \cap \partial^e F \subseteq \partial^e E$. Having shown \eqref{eq:inclusion_diff} we get, taking Hausdorff measure of both sides, 
	\begin{equation*}
	\H^{d-1}( \partial^e(E \setminus F)) \le \H^{d-1}( \partial^e E \setminus \partial^e F) = \H^{d-1}(\partial^e E) - \H^{d-1}(\partial^e F)
	\end{equation*}
	or equivalently 
	\begin{equation*}
	P(E\setminus F) + P(F) \le P(E).
	\end{equation*}
	The other inequality is trivial by subadditivity of the perimeter, hence  $P(E\setminus F) + P(F) = P(E)$ which implies the desired conclusion, being $E$ indecomposable.
\end{proof}

\begin{proposition}[Dolzmann-M\"uller]\label{prop:dolzmann}
	A set $E \subset \R^d$ of finite perimeter is indecomposable if and only if for any $u \in \BV_\loc(\R^d)$ with $V(u) < \infty$ the following implication holds true: 
	\begin{equation*}
	|Du|(E^{1}) = 0 \quad \Longrightarrow \quad \exists c \in \R: \, u(x) = c \text{ for a.e. } x \in E. 
	\end{equation*}
\end{proposition}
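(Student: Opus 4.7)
The plan is to prove the equivalence in two directions, the forward implication ($\Rightarrow$) being the substantive one.

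For the \emph{forward direction}, assume $E$ is indecomposable and let $u\in \BV_\loc(\R^d)$ satisfy $V(u)<\infty$ and $|Du|(E^1)=0$. Applying the Coarea Formula \eqref{eq:coarea_borel} with the Borel set $B=E^1$ gives
\begin{equation*}
0 = |Du|(E^1) = \int_{\R} \H^{d-1}\bigl(\partial^e \{u\ge t\} \cap E^1\bigr)\, dt,
\end{equation*}
so for a.e.\ $t\in\R$ the set $\{u\ge t\}$ has finite perimeter \emph{and} $\H^{d-1}(\partial^e\{u\ge t\}\cap E^1)=0$. For such $t$ set $F_t := E\cap\{u\ge t\}$, which again has finite perimeter as intersection of two finite-perimeter sets. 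A short density computation at points of $E^1$ (where the local picture of $F_t$ agrees with that of $\{u\ge t\}$) yields $\partial^e F_t\cap E^1 = \partial^e\{u\ge t\}\cap E^1$, and at points of $E^0$ the monotonicity $F_t\subseteq E$ gives $\partial^e F_t\cap E^0=\emptyset$. Together these imply $\partial^e F_t \subseteq \partial^e E$ modulo $\H^{d-1}$, so Lemma~\ref{lemma:equivalence_hausdorff} combined with the indecomposability of $E$ forces the dichotomy $\L^d(F_t)=0$ or $\L^d(E\setminus F_t)=0$ for a.e.\ $t$. To conclude, set $c:=\inf\{t : \L^d(F_t)=0\}$ (infimum over good $t$): for every good $t>c$ one has $u<t$ a.e.\ on $E$, for every good $t<c$ one has $u\ge t$ a.e.\ on $E$, and letting $t\downarrow c$ and $t\uparrow c$ respectively yields $u=c$ a.e.\ on $E$. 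Finiteness of $c$ comes from Theorem~\ref{thm:347AFP} (which gives $u-m\in L^{1^*}(\R^d)$, hence $u$ is finite a.e.), the case $\L^d(E)=0$ being trivial.

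For the \emph{converse}, assume the stated implication holds for $E$ and suppose by contradiction that $E=A\sqcup B$ with $\L^d(A),\L^d(B)>0$ and $P(E)=P(A)+P(B)$. Take $u:=\1_A$, so $V(u)=P(A)<\infty$. Since $A\subseteq E$, one has $\partial^e A\cap E^0=\emptyset$ mod $\H^{d-1}$, hence the disjoint decomposition $\partial^e A = (\partial^e A\cap E^1) \sqcup (\partial^e A\cap \partial^e E)$, and analogously for $B$; moreover a density count based on $D(A,x)+D(B,x)=D(E,x)$ yields $\partial^e A\cap E^1 = \partial^e B\cap E^1$ and $\partial^e E \subseteq \partial^e A\cup\partial^e B$ modulo $\H^{d-1}$. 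Summing the two decompositions and using the covering bound $P(E)\le \H^{d-1}(\partial^e A\cap\partial^e E) + \H^{d-1}(\partial^e B\cap\partial^e E)$, the hypothesis $P(E)=P(A)+P(B)$ forces $\H^{d-1}(\partial^e A\cap E^1)=0$, i.e.\ $|D\1_A|(E^1)=0$. The assumed implication then gives $\1_A\equiv c$ a.e.\ on $E$, contradicting $\L^d(A),\L^d(B)>0$.

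The main obstacle is the inclusion $\partial^e F_t\subseteq \partial^e E$ modulo $\H^{d-1}$ in the forward direction: it is precisely the bridge between the measure-theoretic hypothesis $|Du|(E^1)=0$ (localised via Coarea) and the perimeter-additivity hypothesis of Lemma~\ref{lemma:equivalence_hausdorff}, which in turn unlocks the indecomposability of $E$. The delicate point is the density identification $\partial^e F_t\cap E^1 = \partial^e\{u\ge t\}\cap E^1$, which must be carried out carefully since $F_t$ is a product of two finite-perimeter sets whose essential boundary a priori contains contributions from both factors.
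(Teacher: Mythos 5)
Your forward direction follows essentially the same strategy as the paper's proof: apply the Coarea Formula \eqref{eq:coarea_borel} restricted to $E^1$, conclude that for a.e.\ $t$ the set $F_t := E\cap\{u\ge t\}$ satisfies $\H^{d-1}(\partial^e F_t\cap E^1)=0$, and then invoke Lemma~\ref{lemma:equivalence_hausdorff} together with indecomposability to obtain the dichotomy $\L^d(F_t)=0$ or $\L^d(E\setminus F_t)=0$. The only cosmetic difference is that you identify $\partial^e F_t\cap E^1 = \partial^e\{u\ge t\}\cap E^1$ via a density computation at points of $E^1$, whereas the paper sets $v := u\1_E$ and uses the elementary inclusion $\partial^e(A\cap B)\subseteq\partial^e A\cup\partial^e B$; both routes are correct, and the paper's avoids the blow-up argument entirely. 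Your final step, pinning down the constant via $c:=\inf\{t : \L^d(F_t)=0\}$ and invoking Theorem~\ref{thm:347AFP} for finiteness, supplies a careful justification of what the paper dismisses as ``easily seen''. Finally, you also supply the converse direction, which the paper's printed proof omits; your argument (taking $u=\1_A$, splitting the essential boundaries along $E^1$ vs.\ $\partial^e E$ by density counting, and comparing with $P(A)+P(B)=P(E)$) is correct, though it can be compressed by observing that $\1_A + \1_B = \1_E$ and the perimeter additivity immediately force $|D\1_A|+|D\1_B|=|D\1_E|$ as measures, whence $|D\1_A|(E^1)\le|D\1_E|(E^1)=0$ by De Giorgi's Theorem, without any pointwise density bookkeeping.
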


\begin{proof}
Let $E$ be indecomposable and let $u \in \BV_{\loc}(\R^d)$ be a function with $|Du|(E^1) = 0$. Set $v:= u \1_E \in \BV_{\loc}(\R^d)$ and observe that, by Coarea Formula \eqref{eq:coarea_borel}, we have 
\begin{equation*}
| Dv | (E_1) = \int_{\R} \H^{d-1}\left (\partial^e (\{u \ge t \}\cap E) \cap E^1\right) \, dt \le  \int_{\R} \H^{d-1}\Big ( (\partial^e \{u \ge t \} \cup \partial^e E) \cap E^1\Big) \, dt, 
\end{equation*}
where we have used the elementary inclusion $\partial^e (\{u \ge t \}\cap E)  \subset \partial^e \{u \ge t \} \cup \partial^e E$. Taking into account that $\partial^e E \cap E^1 = \emptyset$ we can continue the above chain of inequalities as follows:
\begin{equation*}
| Dv | (E_1)  \le  \int_{\R} \H^{d-1}\left ( \partial^e \{u \ge t \} \cup \partial^e E) \cap E^1\right) \, dt = \int_{\R} \H^{d-1}\left ( \partial^e \{u \ge t \} \cap E^1\right) \, dt = |Du|(E^1) = 0
\end{equation*}
by Coarea Formula applied on $u$. Thus we have $\H^{d-1}\left (\partial^e (\{u \ge t \} \cap E) \cap E^1\right) = 0$ for a.e. $t \in \R$. Now we apply Lemma \ref{lemma:equivalence_hausdorff} to $F:=\{u \ge t \} \cap E\subset E$: since $E$ is indecomposable, we deduce 
\begin{equation*}
\L^d\left(  \{u \ge t \} \cap E  \right) = 0 \qquad \text{ or } \qquad  \L^d\left(  \{u < t \} \cap E  \right) = 0 
\end{equation*}
for a.e. $t \in \R$, which is easily seen to be equivalent to $u$ being constant in $E$.
\end{proof}

Concerning simple sets, we want to prove that simplicity for a set $E$ with $|E|<\infty$ is equivalent to indecomposability both of $E$ and of $E^c$. We need the following preliminary 
\begin{lemma}\label{lemma:unique_infinity}
	Let $E \subset \R^d$, $d>1$, be a set of finite perimeter and assume $\L^d(E)=+\infty$. Let $\mathcal C\mathcal C^M (E) = \{E_i\}_{i \in I}$ be the family of its indecomposable components. Then there exists a unique $j \in I$ such that $\L^d(E_j) = +\infty$.
\end{lemma}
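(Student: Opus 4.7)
The plan is to combine the isoperimetric inequality \eqref{eq:isoperimetry} with the additivity of perimeter across $M$-connected components given by Theorem \ref{thm:ACMM}. The key observation is that each component $E_i$ has finite perimeter, so the isoperimetric inequality dichotomy applies: either $\L^d(E_i) < \infty$ or $\L^d(\R^d\setminus E_i) < \infty$, with the smaller one bounded by $c(d) P(E_i)^{d/(d-1)}$.

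\textbf{Uniqueness.} I would argue by contradiction: suppose $i \ne j$ in $I$ and both $\L^d(E_i) = \L^d(E_j) = +\infty$. Since the components are pairwise disjoint, $E_i \subseteq \R^d \setminus E_j$. The isoperimetric inequality applied to $E_j$ forces $\L^d(\R^d \setminus E_j) \le c(d)\, P(E_j)^{d/(d-1)} < \infty$, so $\L^d(E_i) < \infty$, a contradiction.

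\textbf{Existence.} Assume towards a contradiction that $\L^d(E_i) < \infty$ for every $i \in I$. Then by the isoperimetric inequality, for each $i$
\[
\L^d(E_i) \le c(d)\, P(E_i)^{d/(d-1)}.
\]
By Theorem \ref{thm:ACMM}, $\sum_{i \in I} P(E_i) = P(E) < \infty$. Since $d/(d-1) > 1$ and $P(E_i) \le P(E)$, the elementary inequality $\sum_i a_i^p \le \bigl(\sum_i a_i\bigr)^p$ (valid for $a_i \ge 0$ and $p \ge 1$) gives
\[
\sum_{i \in I} P(E_i)^{d/(d-1)} \le P(E)^{d/(d-1)} < \infty.
\]
Therefore $\sum_{i \in I} \L^d(E_i) < \infty$, but by countable additivity this sum equals $\L^d(E) = +\infty$, a contradiction. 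Hence at least one (and, by uniqueness, exactly one) component has infinite measure.

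The only subtle point in the argument is the summability step $\sum_i P(E_i)^{d/(d-1)} \le P(E)^{d/(d-1)}$; this is a standard consequence of the superadditivity of the function $t \mapsto t^p$ for $p \ge 1$ on $[0,\infty)$, and uses crucially that $d > 1$ so that the Sobolev exponent $d/(d-1)$ is finite and strictly greater than one. Everything else is a direct bookkeeping of the dichotomy supplied by the isoperimetric inequality.
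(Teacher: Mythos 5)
Your proof is correct and follows essentially the same strategy as the paper: isoperimetry combined with the additivity $\sum_i P(E_i) = P(E)$ for existence, and disjointness plus isoperimetry for uniqueness. In fact you are slightly more careful than the paper's own write-up, which elides the exponent $d/(d-1)$ and writes $\sum_i \L^d(E_i) \le C_d \sum_i P(E_i)$ directly; your superadditivity step $\sum_i P(E_i)^{d/(d-1)} \le \bigl(\sum_i P(E_i)\bigr)^{d/(d-1)}$ is exactly the missing justification.
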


\begin{proof}
	The statement is a consequence of the convergence of the series $\sum_{i \in I} P(E_i)$ and of the isoperimetric inequality. Indeed, by contradiction, let us assume that for every $i \in I$ it holds $\L^d (E_i) < \infty$. 
	In particular, for every $i \in I$ it has to be $\L^d( \R^d \setminus E_i)= +\infty$ and hence, by the isoperimetric inequality we would get 
	\begin{equation*}
	\L^d(E) = \L^d \left ( \bigcup_{i \in I} E_i \right) \le \sum_{i \in I} \L^d( E_i)  \le C_d \sum_{i \in I} P(E_i) \le C_dP(E) < \infty 
	\end{equation*}
	which is absurd. Hence there must exist at least one element $j \in I$ such that $\L^d(E_j) =+\infty$. 
	Let us now prove the uniqueness of $j$: assume that there exists $j_1, j_2 \in I$ such that $\L^d(E_{j_1}) = \L^d(E_{j_2}) = +\infty$. Since 
	\begin{equation*}
	P(E_{j_1})+ P(E_{j_2}) \le \sum_{i \in I} P(E_i) = P(E) < +\infty
	\end{equation*}
	we have that $P(E_{j_1}) < \infty$ and $P(E_{j_2})< \infty$. Furthermore, by definition of indecomposable components the sets $E_{j_1}$ and $E_{j_2}$ are (essentially) disjoint, i.e. $E_{j_1} \cap E_{j_2} = \emptyset$ mod $\mathscr L^d$. In particular, we deduce that 
	\begin{equation*}
	E_{j_2} \subset E_{j_1}^c \Rightarrow +\infty = \L^d(E_{j_2}) \le \L^d(E_{j_1}^c) 
	\end{equation*}
	so 
	\begin{equation*}
	\L^d(E_{j_1})  = \L^d( E_{j_1}^c) = +\infty
	\end{equation*} 
	which is a contradiction with the fact that $P(E_{j_1}) < \infty$ (and the isoperimetric inequality). Thus $j \in I$ has to be unique and the proposition is proved.
\end{proof}

We are now ready to present the following characterization of simple sets: 

\begin{proposition}\label{prop:char_simple}
	Let $E \subset \R^d$, $d>1$, be a set with finite positive measure, $\L^d(E) \in (0,+\infty)$. The set $E$ is simple if and only if $E$ and $E^c$ are indecomposable. 
\end{proposition}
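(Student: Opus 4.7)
The proof is a direct application of Lemma \ref{lemma:unique_infinity} together with the definitions. The plan is to handle both directions by analyzing the $M$-connected components of $E^c$, using the fact that $\L^d(E^c) = +\infty$ (since $\L^d(E) < \infty$ and $d > 1$, implying $\L^d(\R^d) = \infty$).

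For the forward implication, I would assume $E$ is simple. Then $E$ is indecomposable by definition, so only indecomposability of $E^c$ must be shown. Write $\mathcal{CC}^M(E^c) = \{F_i\}_{i \in J}$. Since $\L^d(E^c) = +\infty$, Lemma \ref{lemma:unique_infinity} (applied to $E^c$, which has finite perimeter because $P(E^c) = P(E) < \infty$) gives a unique index $j \in J$ with $\L^d(F_j) = +\infty$; every other component $F_i$ has finite Lebesgue measure and is therefore a hole of $E$ by Definition \ref{def:simple}. The hypothesis that $E$ is saturated, i.e. $\mathrm{sat}(E) = E$, forces the collection of holes to be empty, so $J = \{j\}$ and $E^c$ is indecomposable.

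For the converse, suppose $E$ and $E^c$ are indecomposable. Then in particular $E$ has finite perimeter and is indecomposable, so only saturation remains to be checked. Since $E^c$ is indecomposable, $\mathcal{CC}^M(E^c)$ consists of the single set $E^c$ itself, which has infinite Lebesgue measure. Hence $E$ has no holes (since a hole would be an $M$-connected component of $E^c$ of finite measure), and consequently $\mathrm{sat}(E) = E$. Thus $E$ is simple.

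The argument is essentially a bookkeeping exercise once Lemma \ref{lemma:unique_infinity} is in hand; the only subtlety is ensuring the hypothesis $d > 1$ is used at the right spot, namely to conclude that $\L^d(E^c) = +\infty$ from $\L^d(E) < \infty$ (so that Lemma \ref{lemma:unique_infinity} actually applies to $E^c$). I do not expect any serious obstacle here.
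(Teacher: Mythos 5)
Your proof is correct and follows essentially the same route as the paper: both directions hinge on Lemma \ref{lemma:unique_infinity} applied to $E^c$ (which has infinite measure since $\L^d(E)<\infty$ and $d>1$), identifying the finite-measure components of $E^c$ with the holes of $E$. Your explicit remark that $P(E^c)=P(E)<\infty$, so the lemma indeed applies, is a small but worthwhile precision that the paper leaves implicit.
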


\begin{proof} Assume that $E$ is simple. Then it is clearly indecomposable; thus it is sufficient to show that $E^c$ is indecomposable. Since $\L^d(E) \in (0,+\infty)$, we have $|E^c| = +\infty$. Letting $\mathcal C\mathcal C^M (E^c) := \{U_i\}_{i \in I}$ be the indecomposable components of $E^c$, by Lemma \ref{lemma:unique_infinity} there exists one and only one $j \in I$ such that $\L^d(U_j) = +\infty$. So if $\sharp I > 1$ the other components $\{U_i\}_{i \ne j}$ of $E^c$ must have finite measure, i.e. they are holes of $E$. This contradicts the simplicity of the set $E$: hence  $\sharp I = 1$ and $E^c$ is thus indecomposable. 

To prove the converse, let us now assume that $\L^d(E) \in (0,+\infty)$ and both $E$ and $E^c$ are indecomposable and we want to prove that $E$ has no holes. By definition a hole of $E$ is a indecomposable component of $E^c$ of finite measure. Being indecomposable, $E^c$ has a unique indecomposable component, which coincides with itself. But $\L^d(E^c) = \infty$ since $E$ has finite measure, and this implies that $E$ has no holes, hence it is simple. 
\end{proof}

\begin{remark}\label{rmk:still_holds} The necessary condition in Proposition \ref{prop:char_simple} holds even if $\L^d(E)=+\infty$ if $d>1$: indeed, as already observed, if $E$ is simple and $\L^d(E)=+\infty$ then $E=\R^d$, hence the claim is trivial, being the empty set indecomposable.\end{remark}
	
\subsection{Jordan curves in $\R^2$}
In this section we collect some results about Jordan curves in the plane $\R^2$.

\begin{definition}
	A set $C \subseteq \R^2$ is called a \emph{Jordan curve} if $C = \gamma([a,b])$ for some $a,b\in \R$ (with $a<b$) and some continuous map $\gamma\colon [a,b]\to \R^2$, one-to-one on $[a,b)$ and such that $\gamma(a) = \gamma(b)$.
\end{definition}

\begin{remark} If $\H^1(C) < \infty$ then $\gamma$ can be chosen in such a way that it is Lipschitz (see \cite[Lemma 3]{ACMM}), and in this case $\Gamma$ is called a \emph{rectifiable Jordan curve}.\end{remark}

Without any loss of generality, when dealing with Jordan curves, we will always suppose $[a,b]=[0,1]$. The following result, borrowed from \cite{ACMM}, will play a crucial role in the paper. 

\begin{theorem}[{\cite[Theorem 7]{ACMM}}]\label{thm:simple-boundary}
	Let $E \subseteq \R^2$ be a simple set with $\L^2(E) \in (0,+\infty)$.
	Then $E$ is essentially bounded and $\d^eE$ is equivalent, up to an $\H^1$-negligible set, to a rectifiable Jordan curve. Conversely, $\mathop{\mathsf{int}}(C)$ is a simple set for any rectifiable Jordan curve $C$.
\end{theorem}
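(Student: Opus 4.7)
The plan is to prove the two implications separately, devoting most of the effort to the forward direction (simplicity implies rectifiable Jordan-curve boundary), and then deriving the converse from the classical Jordan curve theorem.

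\textbf{First}, I would establish essential boundedness of $E$. Since $E$ is simple with $0 < \L^2(E) < \infty$, Proposition \ref{prop:char_simple} gives that both $E$ and $E^c$ are indecomposable, and $\L^2(E^c) = \infty$. Assume for contradiction that $\L^2(E \setminus B_R) > 0$ for every $R > 0$. Slicing $E$ by the radial distance, for $\L^1$-a.e.\ $R > 0$ one has $P(E;\partial B_R) = 0$ together with
$$
P(E \cap B_R) + P(E \setminus B_R) = P(E) + 2\H^1(\partial B_R \cap E^1).
$$
Since $\L^2(E) = \int_0^\infty \H^1(E \cap \partial B_R)\,dR < \infty$ by Fubini, $\H^1(\partial B_R \cap E^1)$ vanishes along a sequence $R_n \to \infty$. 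Along such a sequence, both $E \cap B_{R_n}$ and $E \setminus B_{R_n}$ have positive Lebesgue measure and their perimeters add to $P(E)$, contradicting indecomposability of $E$.

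\textbf{Second}, I would analyse $\partial^e E$. By Theorem \ref{thm:de_giorgi} it is $\H^1$-rectifiable with $\H^1(\partial^e E) = P(E) < \infty$, and by the previous step essentially compact. I would combine two inputs to present it as a Jordan curve modulo $\H^1$-null sets: (i) a planar structure result such as \cite[Lemma 3]{ACMM}, asserting that a compact connected set of finite $\H^1$-measure in $\R^2$ is a Lipschitz image of $[0,1]$; and (ii) the two-sided indecomposability of $E$ and $E^c$, which through Proposition \ref{p-decomposition-th+} forbids any non-trivial splitting $\partial^e E = \Gamma_1 \sqcup \Gamma_2$ into disjoint closed pieces with additive perimeters (such a splitting would separate $E$, or $E^c$, into two positive-measure pieces whose perimeters add). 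Together these would provide a Lipschitz loop $\gamma\colon [0,1] \to \R^2$ with $\gamma(0)=\gamma(1)$ and $\gamma([0,1]) = \partial^e E$ up to $\H^1$-null. Injectivity on $[0,1)$ would then be obtained by ruling out self-crossings: a transverse self-intersection would partition $\R^2$ into at least three regions of positive measure, once more contradicting indecomposability of $E$ or of $E^c$.

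\textbf{For the converse}, let $C = \gamma([0,1])$ be a rectifiable Jordan curve. The classical Jordan curve theorem yields exactly two connected open components of $\R^2 \setminus C$: a bounded one $\mathsf{int}(C)$ and an unbounded one. Both have finite perimeter, bounded by $\H^1(C)$ via the Lipschitz parametrisation of $\gamma$. Topological connectedness of each open component upgrades to indecomposability through Proposition \ref{prop:dolzmann}: any measure-theoretic decomposition of an open connected set would produce a non-constant $\BV$ function with $|Du|$ concentrated outside its interior, contradicting that proposition. Proposition \ref{prop:char_simple} then gives that $\mathsf{int}(C)$ is simple.

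\textbf{The main obstacle} I expect is the gluing step in the forward direction: passing from the local $C^1$-curve structure provided by De Giorgi's theorem, together with two-sided indecomposability, to a single globally injective closed Lipschitz parametrisation. Rectifiability alone only yields a countable Lipschitz covering; it is the planar topology (orientation and the Jordan curve theorem applied to candidate sub-loops) combined with the indecomposability of both $E$ and $E^c$ that must be exploited to glue the pieces into a single Jordan loop without self-crossings.
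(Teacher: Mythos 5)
This theorem is quoted verbatim from \cite[Theorem 7]{ACMM} and is not proved in the paper, so there is no internal proof to compare against; I will evaluate your sketch on its own.

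Your boundedness step contains a genuine gap. From $\int_0^\infty \H^1(E^1\cap\partial B_R)\,dR = \L^2(E) < \infty$ you obtain only $\liminf_{R\to\infty}\H^1(E^1\cap\partial B_R)=0$, i.e.\ a sequence $R_n$ along which the slice measure \emph{tends to} $0$, not one along which it vanishes identically. To contradict indecomposability you need the exact identity $P(E\cap B_{R_n}) + P(E\setminus B_{R_n}) = P(E)$, which requires $\H^1(E^1\cap\partial B_{R_n})=0$. But all that indecomposability forces is $\H^1(E^1\cap\partial B_R)>0$ for a.e.\ $R$ with both pieces of positive measure, and this is perfectly compatible with the finite integral (take $\H^1(E^1\cap\partial B_R)\sim R^{-2}$). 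No contradiction results from these two facts alone. In \cite{ACMM} essential boundedness is established by a sharper one-dimensional slicing argument for $\1_E$ along lines, producing a quantitative essential-diameter bound, not by a bare integrability argument.

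Your second step is, as you acknowledge, the real difficulty, and the sketch does not resolve it. Rectifiability gives only a countable family of Lipschitz arcs covering $\partial^e E$ up to $\H^1$-null sets; invoking \cite[Lemma 3]{ACMM} presupposes a compact \emph{connected} representative of $\partial^e E$, which is exactly the point to be established. Proposition~\ref{p-decomposition-th+} forbids certain measure-theoretic splittings of $E$ itself, not of $\partial^e E$ as a point set, and promoting two-sided indecomposability to topological connectedness of some representative of the boundary is precisely where the nontrivial work of \cite{ACMM} lies. Pointing to the obstacle is not the same as removing it.

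The converse is essentially correct but phrased in the wrong logical direction. To show that an open connected $\Omega$ of finite perimeter is indecomposable via Proposition~\ref{prop:dolzmann}, you must verify its hypothesis directly rather than extract a contradiction from a hypothetical decomposition: if $u\in\BV_\loc(\R^2)$ with $V(u)<\infty$ has $|Du|(\Omega^1)=0$, then since $\Omega\subseteq\Omega^1$ (an open set has density $1$ at each of its points), $Du=0$ as a distribution on the connected open set $\Omega$, so $u$ is constant there. Applying this to both $\mathop{\mathsf{int}}(C)$ and the unbounded component of $\R^2\setminus C$ and then invoking Proposition~\ref{prop:char_simple} yields simplicity of $\mathop{\mathsf{int}}(C)$; your statement ``would produce a non-constant $\BV$ function \ldots\ contradicting that proposition'' confuses proving the implication with refuting a counterexample to it.
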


Here $\mathop{\mathsf{int}}(C)$ denotes the bounded connected component of $\R^2 \setminus C$, given by the celebrated Jordan theorem (see e.g. \cite[Proposition 2B.1]{hatcher}). 

\subsection{Extreme points and Choquet theory}
In this section we recall the main facts about extreme points of compact, convex sets in normed spaces. Standard references are \cite{rudin, phelps}. 

Let $X$ be a topological vector space and let $K \subset X$. A point $x \in K$ is an \emph{extreme point of $K$} if  
	\begin{equation*}
	y,z \in K: \quad  t \in [0,1] \quad  x = (1-t)y + tz  \quad \Longrightarrow \quad x=y=z. 
	\end{equation*}
 The set of extreme points of $K$ will be denoted by $\ext K$.

 \begin{remark}[The set of extreme points is a Borel set]\label{rmk:extreme_borel}
 	Recall that $\ext K$ is a Borel subset of $K$ if the topology of $X$ is induced by some metric $\rho$. Indeed, the set $K\setminus \ext K$ can be written as $\bigcup_n C_n$, where
 	\begin{equation*}
 	C_n := \bigg\{\frac{y+z}{2}\;\bigg|\;y,z\in K,\; \rho (y,z) \geq \frac{1}{n} \bigg\} \quad\text{ for every }n\in\N \text{ with } n \ge 1.
 	\end{equation*}
 	Given that each set $C_n$ is a closed subset of $X$ we conclude that $\ext K$ is Borel.
 \end{remark}

In the case $K$ is a convex, compact set the (closed convex hull of the) set of extreme points of $K$ coincides with the set $K$ itself, as the following theorem states: 

\begin{theorem}[Krein-Milman]\label{thm:krein_milman} If $K \subset X$ is non-empty, compact, convex set then $K = \overline{\mathsf {co}}(\ext(K))$.
\end{theorem}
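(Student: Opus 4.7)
The proof splits into two parts: showing that $\ext K$ is non-empty, and then showing $\overline{\mathsf{co}}(\ext K) = K$. Throughout, I would work under the standing assumption that $X$ is locally convex and Hausdorff (which is the case for the weak$^*$ topology of a dual Banach space, the setting relevant to the rest of the paper), so that the Hahn--Banach separation theorem supplies enough continuous linear functionals.

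Call a non-empty closed subset $E \subseteq K$ \emph{extremal} if whenever $tx + (1-t)y \in E$ with $x,y \in K$ and $t \in (0,1)$, necessarily $x,y \in E$. Order the collection of extremal subsets by reverse inclusion. Any chain has non-empty intersection by compactness of $K$, and this intersection is easily checked to be extremal, so Zorn's lemma produces a minimal extremal subset $E_0$. The first key claim is that $E_0$ is a singleton. Indeed, if $E_0$ contained two distinct points $p, q$, pick a continuous linear functional $\ell$ on $X$ with $\ell(p) \neq \ell(q)$ and set
\begin{equation*}
E_0' := \bigl\{ z \in E_0 : \ell(z) = \max\nolimits_{E_0} \ell \bigr\}.
\end{equation*}
This set is non-empty and closed (compactness of $E_0$, continuity of $\ell$), strictly smaller than $E_0$, and still extremal in $K$, contradicting minimality. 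Hence $E_0 = \{x_0\}$, and an extremal singleton is precisely an extreme point, so $x_0 \in \ext K$.

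For the second part, let $C := \overline{\mathsf{co}}(\ext K)$. Then $C \subseteq K$ by convexity and closedness of $K$. Suppose for contradiction that there exists $x_0 \in K \setminus C$. Since $C$ is closed and convex and $\{x_0\}$ is compact and convex, Hahn--Banach separation gives a continuous linear functional $\ell$ and $\alpha \in \R$ with $\ell(x_0) > \alpha$ and $\ell(y) < \alpha$ for every $y \in C$. Define
\begin{equation*}
M := \bigl\{ x \in K : \ell(x) = \max\nolimits_K \ell \bigr\},
\end{equation*}
which is non-empty by compactness of $K$, closed, convex, and an extremal subset of $K$. Applying the first part to $M$ (itself compact and convex in its own right) yields an extreme point $m$ of $M$; a direct verification, using that $M$ is extremal in $K$, shows that $m$ is also an extreme point of $K$. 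Then $m \in \ext K \subseteq C$, forcing $\ell(m) < \alpha$. But $\ell(m) = \max_K \ell \geq \ell(x_0) > \alpha$, a contradiction.

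The main obstacle is the second half: the first half is a routine Zorn's lemma argument once the correct notion of extremal subset is isolated, whereas passing from the mere existence of extreme points to the statement that their closed convex hull recovers all of $K$ genuinely needs the separation form of Hahn--Banach. One must also be careful at the final step to verify that the extreme point $m$ of $M$ is extreme in $K$, which is precisely where the fact that $M$ is an \emph{extremal} subset of $K$ (not merely compact and convex) is used.
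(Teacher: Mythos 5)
The paper does not prove Theorem \ref{thm:krein_milman}; it cites it as a standard result (with references to \cite{rudin, phelps}), so there is no in-text proof to compare against. Your argument is the classical Krein--Milman proof and it is correct: Zorn's lemma applied to extremal subsets (ordered by reverse inclusion, using the finite intersection property of compact sets to bound chains) produces a minimal extremal set, which must be a singleton and hence an extreme point; Hahn--Banach separation then shows that $\overline{\mathsf{co}}(\ext K)$ exhausts $K$, via the standard device of maximizing the separating functional on $K$ and extracting an extreme point of the resulting face $M$. You rightly flag the transitivity step (an extreme point of the extremal face $M$ is extreme in $K$) as the place where extremality of $M$, not merely its compactness and convexity, is used. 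You are also right to impose local convexity and the Hausdorff property as standing hypotheses: the paper's blanket phrase ``topological vector space'' is strictly too weak for Krein--Milman, but the ambient space actually used later ($\mathscr M(\R^d;\R^d)$ with the weak-$*$ topology) satisfies both, so nothing relevant to the paper is lost.
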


We recall that, in a vector space $X$, the convex hull $\mathsf{co}(A)$ of a set $A \subset X$ is the intersection of all convex sets containing $A$. 

\begin{definition}[Vector valued integration]\label{def:vector_integration}
	Let $\mu$ be a measure on a non-empty set $Q$. Let $f \colon Q \to X$ be an $X$-valued function such that $(\Lambda f)(q) := \Lambda(f(q))$ is $\mu$-integrable for every $\Lambda \colon X \to \R$ linear and continuous. If there exists $y \in X$ such that 
	\begin{equation*}
	\Lambda y = \int_Q \Lambda f \, d\mu 
	\end{equation*}
	for every $\Lambda \colon X \to \R$ linear and continuous then we say that $y$ is the \emph{integral of $f$ with respect to $\mu$} and we write   
	\begin{equation*}
	\int_Q f d\mu := y. 
	\end{equation*}
\end{definition}

\begin{theorem}[Representation of the convex hull]\label{thm:convex_hull_representation} Let $Q \subset X$ be a compact set and let $H:=\mathsf {co}(Q)$. Assume that $\overline H =  \overline{\mathsf {co}}(Q)$ is compact as well. Then 
	\begin{equation*}
	y \in \overline{H} \iff \exists \mu \in \mathscr P(Q): \, y = \int_Q x \, d\mu(x).
	\end{equation*}
\end{theorem}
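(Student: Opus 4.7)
The plan is to prove the two implications separately, using the Hahn-Banach separation theorem for $(\Leftarrow)$ and the Banach-Alaoglu theorem for $(\Rightarrow)$. Both directions tacitly require $X$ to be locally convex Hausdorff, which is the ambient setting of all applications in the paper (notably $\mathscr M(\R^d;\R^d)$ endowed with its weak* topology).

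For $(\Leftarrow)$, I would assume that $y = \int_Q x\, d\mu(x)$ for some $\mu \in \mathscr P(Q)$ and reach a contradiction if $y \notin \overline H$. Separating $y$ from the closed convex set $\overline H$ yields a continuous linear functional $\Lambda \colon X \to \R$ with $\Lambda(y) > \sup_{\overline H} \Lambda \ge \sup_Q \Lambda$. On the other hand, Definition \ref{def:vector_integration} applied to $\Lambda$ forces
\[
\Lambda(y) = \int_Q \Lambda(x)\, d\mu(x) \le \sup_Q \Lambda,
\]
a contradiction.

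For $(\Rightarrow)$, I would first observe that the claim is immediate when $y \in H$: writing $y = \sum_{i=1}^n t_i x_i$ with $x_i \in Q$, $t_i \ge 0$ and $\sum_i t_i = 1$, the discrete measure $\mu := \sum_{i=1}^n t_i \delta_{x_i} \in \mathscr P(Q)$ represents $y$ since $\Lambda(y) = \sum_i t_i \Lambda(x_i) = \int_Q \Lambda\, d\mu$ for every continuous linear $\Lambda$. For a general $y \in \overline H$ I would take a net $\{y_\alpha\} \subset H$ with $y_\alpha \to y$ in $X$, represent each $y_\alpha$ by some $\mu_\alpha \in \mathscr P(Q)$ via the previous step, and then extract a weak* convergent subnet $\mu_\alpha \to \mu \in \mathscr P(Q)$; this is possible because $\mathscr P(Q) \subset C(Q)^*$ is norm-bounded and weak* closed, hence weak* compact by Banach-Alaoglu. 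Since any continuous linear $\Lambda$ on $X$ restricts to an element of $C(Q)$, passing to the limit in $\Lambda(y_\alpha) = \int_Q \Lambda\, d\mu_\alpha$ gives $\Lambda(y) = \int_Q \Lambda\, d\mu$, which is exactly the defining property of $y = \int_Q x\, d\mu(x)$ in Definition \ref{def:vector_integration}.

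The main obstacle is topological bookkeeping: one must reconcile the topology on $X$ (which is what makes the barycenter condition $\Lambda(y) = \int_Q \Lambda\, d\mu$ meaningful and at the same time guarantees uniqueness of such $y$) with the weak* topology on $\mathscr P(Q) \subset C(Q)^*$ (which is what delivers compactness of the representing measures). The bridge between the two is the elementary fact that restriction to $Q$ sends continuous linear functionals on $X$ into $C(Q)$. Beyond this purely topological care no deeper structural input about $X$ or $Q$ is needed; the result is in this sense a routine packaging of Hahn-Banach and Banach-Alaoglu.
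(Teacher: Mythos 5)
The paper states Theorem~\ref{thm:convex_hull_representation} as a quoted preliminary fact (citing \cite{rudin,phelps}) and gives no proof of its own, so there is no in-paper argument to compare against. Your proof is correct and is essentially the textbook argument: you rightly observe that the separating functional and the compactness arguments require $X$ to be locally convex and Hausdorff, which holds in the paper's ambient setting $\mathscr M(\R^d;\R^d)$ with the weak* topology. The $(\Leftarrow)$ direction via Hahn--Banach separation combined with Definition~\ref{def:vector_integration} is exactly right, since $\Lambda|_Q\in C(Q)$ is bounded and hence $\mu$-integrable for any $\mu\in\mathscr P(Q)$. For $(\Rightarrow)$, your subnet extraction in $\mathscr P(Q)$ (weak* compact by Banach--Alaoglu) and the observation that $\Lambda(y_\alpha)=\int_Q\Lambda\,d\mu_\alpha$ passes to the limit is a perfectly valid route; the classical reference (Rudin, Thm.~3.28) argues slightly differently, showing that the barycenter map $\mathscr P(Q)\to X$ is continuous and therefore has compact convex image containing $Q$, hence containing $\overline H$ — but the two arguments are essentially equivalent in content. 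One cosmetic remark: the compactness hypothesis on $\overline{\mathsf{co}}(Q)$ is not actually invoked in your two steps, though it is implicitly needed to guarantee the barycenter lands in $X$ (rather than merely in some completion) and, in the $(\Leftarrow)$ direction, to ensure the integral defines an element of $X$ at all; it would be worth flagging where that hypothesis enters.
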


One of the fundamental results in functional analysis and convex analysis is the
following theorem, which can be obtained combining Theorem \ref{thm:krein_milman} with Theorem \ref{thm:convex_hull_representation}: 

\begin{theorem}[Choquet \cite{phelps}]\label{thm:Choquet}
	Let $X$ be a metrizable topological vector space and let $\emptyset\neq K\subset X$ be convex and compact.
	Then for any point $x\in K$ there exists a Borel probability measure
	$\mu$ on $X$ (possibly depending on $x$), which is concentrated on $\ext K$ and satisfies
	\begin{equation*}
	x = \int_{\ext K} y \, d\mu(y)
	\end{equation*}
	where the integral is understood in the sense of Definition \ref{def:vector_integration}, i.e. explicitly
	\begin{equation*}
	\Lambda(x) = \int_{\ext K} \Lambda(y) \, d\mu(y), \qquad \text{ for every } \Lambda \colon X \to \R \text{ linear and continuous}.
	\end{equation*}
\end{theorem}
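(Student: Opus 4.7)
\smallskip

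\noindent\textbf{Plan.} This is the classical Choquet integral representation theorem in the metrizable setting, and I would follow the standard approach laid out in Phelps's monograph. The strategy is to equip the set $\mathscr P(K)$ of Borel probability measures on $K$ with a suitable partial order, use Zorn's lemma to extract a maximal element among those measures having barycenter $x$, and then show that maximality forces concentration on $\ext K$. The metrizability hypothesis is what will make the conclusion clean (in particular, $\ext K$ is Borel by Remark~\ref{rmk:extreme_borel}, so the phrase ``concentrated on $\ext K$'' makes sense).

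\smallskip

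\noindent\textbf{Key steps.} First I would introduce the Choquet order on $\mathscr P(K)$ by declaring $\mu \prec \nu$ whenever $\int_K f\,d\mu \le \int_K f\,d\nu$ for every continuous convex function $f\colon K\to \R$. By the Hahn-Banach theorem (and the fact that affine continuous functions are convex and $-$convex), any two measures comparable in this order have the same barycenter; in particular, for each $x\in K$ the set
\[
M_x := \{\mu \in \mathscr P(K) : x = \textstyle\int_K y\,d\mu(y)\}
\]
is non-empty (it contains $\delta_x$), convex, and weak-$*$ compact since $K$ is compact. Next I would verify via a standard Zorn argument that $(M_x,\prec)$ admits maximal elements: every $\prec$-totally ordered chain admits an upper bound obtained as a weak-$*$ cluster point, using that the suprema $\sup_\mu \int f\,d\mu$ along the chain are finite and that the functionals $\mu \mapsto \int f\,d\mu$ are weak-$*$ continuous.

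\smallskip

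\noindent\textbf{Main obstacle.} The core step, and the only one that genuinely uses metrizability, is the lemma that any $\prec$-maximal measure $\mu \in M_x$ is concentrated on $\ext K$. To prove this I would exploit the fact that metrizability of $K$ (combined with compactness) implies that $C(K)$ is separable, which allows one to construct a \emph{strictly} convex continuous function $g\colon K\to\R$ (for instance as a uniformly convergent series of squared continuous affine functions with coefficients tuned to a countable dense subset of $K$). The key computation is then that the upper envelope $\bar g(y) := \inf\{h(y) : h\text{ affine continuous}, h\ge g\}$ satisfies $\bar g = g$ precisely on $\ext K$; combined with the characterization (due to Mokobodzki) that $\mu$ is $\prec$-maximal if and only if $\int_K g\,d\mu = \int_K \bar g\,d\mu$, one concludes $\mu(\{g < \bar g\}) = 0$, i.e.\ $\mu$ is concentrated on $\ext K$. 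Once this is in hand, applying the maximal measure lemma to $x\in K$ yields the required representing measure, and the integral identity $\Lambda(x) = \int_{\ext K}\Lambda(y)\,d\mu(y)$ for continuous linear $\Lambda$ follows from the definition of barycenter, proving the theorem.
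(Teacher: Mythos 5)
The paper does not supply its own proof of this statement: it is presented as a citation to Phelps, preceded by the remark that it ``can be obtained combining'' the Krein--Milman theorem (Theorem~\ref{thm:krein_milman}) with the representation of the convex hull (Theorem~\ref{thm:convex_hull_representation}). Be aware that this remark is only heuristic and, read literally, does not prove the theorem: combining those two results (applied with $Q = \overline{\ext K}$, since Theorem~\ref{thm:convex_hull_representation} requires a compact $Q$) yields a representing measure concentrated on the \emph{closure} $\overline{\ext K}$, and passing from $\overline{\ext K}$ to $\ext K$ is precisely the non-trivial content of Choquet's theorem. Your outline, by contrast, is the genuine standard argument from Phelps's monograph: equip $\mathscr P(K)$ with the Choquet order, verify that the set $M_x$ of representing measures for $x$ is non-empty, convex and weak-$*$ compact, produce a $\prec$-maximal $\mu \in M_x$ via Zorn, and then use metrizability of $K$ (hence separability of $C(K)$) to construct a strictly convex continuous $g$ whose upper envelope $\bar g$ satisfies $\{g = \bar g\} = \ext K$; Mokobodzki's criterion for maximality then yields $\mu(\{g < \bar g\}) = 0$. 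Your argument is correct, and it in fact shows a bit more than stated: $\mu$ is carried by the Borel set $\{g = \bar g\}$, so the conclusion makes sense independently of Remark~\ref{rmk:extreme_borel} (which guarantees $\ext K$ itself is Borel). The only step worth spelling out fully if you were to write this in detail is the Zorn chain argument (that an increasing chain in $(M_x, \prec)$ has an upper bound obtained as a weak-$*$ cluster point), and Mokobodzki's lemma, both of which you correctly identify.
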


\begin{remark}[Extreme points and isomorphisms]\label{rmk:extreme_isomor}
	Let $(Y, \Vert \cdot \Vert_{Y})$ be a normed space and suppose that $\phi \colon X \to Y$ is a linear isomorphism between $X$ and $Y$. Then for any set $A \subset X$ it holds 
	\begin{equation*}
	\ext \phi(A) = \phi(\ext A).
	\end{equation*}
	Indeed, consider $z \in \ext \phi(A)$. Being $\phi$ one-to-one and onto, there exists a unique $a \in A$ such that $z = \phi(a)$. We want to prove that $a \in \ext A$: let 
	\begin{equation*}
	f,g \in A: \, \lambda f + (1-\lambda)g = a.  
	\end{equation*} 
	Since $\phi$ is linear 
	\begin{equation*}
	\lambda \phi(f) + (1-\lambda)\phi(g) = \phi(a)=z
	\end{equation*}
	but $z$ was an extreme point hence $\phi(f)=\phi(g)=z$ which implies $f=g=a$, i.e. $a$ is also extreme. An analogous proof shows that if $b \in \ext A$ than $\phi(b)$ is also extreme of $\phi(A)$.
\end{remark}

\section{Extreme points of the unit ball of \texorpdfstring{$\BV$}{BV} functions in \texorpdfstring{$\R^d$}{Rd}}

Let us consider the Banach space $X := \left(\BV(\R^d), \Vert \cdot \Vert_{\BV}\right)$ and let us characterize extreme points of $B_1^X$, the closed unit ball.

\begin{proposition}[Extreme points of unit ball in $\BV(\R^d)$]\label{prop:extreme_points_of_BV} A function $f \in X$ is an extreme point of $B_1^X$ if and only there exists an indecomposable set $E \subset \R^d$ of positive, finite perimeter and positive, finite measure and a constant $\sigma \in \{\pm 1\}$ such that 
\begin{equation*}
	f(x) = \sigma \frac{\1_E(x)}{\| \1_E \|_{\BV}}, \qquad \L^d\text{-a.e. } x \in \R^d. 
\end{equation*}
\end{proposition}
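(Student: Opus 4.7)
The plan is to prove the two implications separately.

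\emph{Necessity.} Assume $f\in B_1^X$ is extreme. First note that $\|f\|_{BV}=1$, since otherwise $f = \tfrac12(f+\varepsilon w)+\tfrac12(f-\varepsilon w)$ for suitable small $w\ne 0$ would contradict extremality. Applying the Coarea Formula \eqref{eq:coarea_borel} separately above and below the level $0$ gives $V(f)=V(f_+)+V(f_-)$; combined with the trivial $L^1$-additivity, this yields $\|f\|_{BV}=\|f_+\|_{BV}+\|f_-\|_{BV}$. If $f_+$ and $f_-$ were both nonzero, then
\[
f \;=\; \|f_+\|_{BV}\cdot \frac{f_+}{\|f_+\|_{BV}}\;+\;\|f_-\|_{BV}\cdot \frac{-f_-}{\|f_-\|_{BV}}
\]
would be a non-trivial convex combination in $B_1^X$, contradicting extremality. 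Hence $f$ has a definite sign, and we may assume $f\ge 0$. For $s>0$ set $F_s:=\min(s,f)$, $G_s:=\max(f-s,0)$, so that $f=F_s+G_s$. A direct coarea computation yields $\|F_s\|_{BV}+\|G_s\|_{BV}=\|f\|_{BV}=1$, and whenever both summands are positive, extremality forces $F_s/\|F_s\|_{BV}=G_s/\|G_s\|_{BV}=f$. Unpacking the pointwise identity $\min(s,f(x))=\|F_s\|_{BV}\,f(x)$ shows that $f$ takes essentially only two values $0$ and $c$, so $f=c\,\1_E$ with $c=1/\|\1_E\|_{BV}$. Finally, if $E$ were decomposable as $E=E_1\sqcup E_2$ with $P(E)=P(E_1)+P(E_2)$, then $\|\1_E\|_{BV}=\|\1_{E_1}\|_{BV}+\|\1_{E_2}\|_{BV}$ would produce a further non-trivial convex decomposition of $f$ in $B_1^X$; hence $E$ is indecomposable.

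\emph{Sufficiency.} Conversely, let $f=\sigma c\,\1_E$ with $E$ indecomposable of positive finite measure and perimeter, and $c=1/\|\1_E\|_{BV}$; up to a sign change we may take $\sigma=1$. Suppose $f=tg+(1-t)h$ with $g,h\in B_1^X$ and $t\in(0,1)$. From $\|f\|_{BV}=1$ and convexity of $\|\cdot\|_1$ and $V$ we extract $\|g\|_{BV}=\|h\|_{BV}=1$, together with the separate equalities
\[
\|f\|_1 = t\|g\|_1+(1-t)\|h\|_1,\qquad V(f)=tV(g)+(1-t)V(h).
\]
The first (together with $f=0$ on $E^c$ and $f=c>0$ on $E$) forces $g,h\ge 0$ a.e.\ and $g=h=0$ a.e.\ on $E^c$. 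The second is the key point: rewritten as $|tDg+(1-t)Dh|(\R^d)=|tDg|(\R^d)+|(1-t)Dh|(\R^d)$, a standard Radon--Nikodym argument upgrades it to an equality of Borel measures
\[
|Df|(A)\;=\;t|Dg|(A)\,+\,(1-t)|Dh|(A)\qquad\text{for every Borel }A\subset\R^d.
\]
Since $Df=cD\1_E$ is concentrated on $\partial^eE$ by De Giorgi's Theorem \ref{thm:de_giorgi}, the same holds for $|Dg|$ and $|Dh|$; in particular $|Dg|(E^1)=0$. The Dolzmann--M\"uller criterion (Proposition \ref{prop:dolzmann}) now applies: since $E$ is indecomposable, $g$ must be constant on $E$, so $g=\alpha\,\1_E$ for some $\alpha\in\R$. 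The constraint $\|g\|_{BV}=1=\|f\|_{BV}$ pins down $\alpha=c$, i.e.\ $g=f$; symmetrically $h=f$, so $f$ is extreme.

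The main obstacle is the measure-theoretic upgrade in the sufficiency direction: promoting the scalar equality of variations to an equality of (total variations of) vector measures, which localises $|Dg|$ and $|Dh|$ onto $\partial^e E$. Once this is in place, Proposition \ref{prop:dolzmann} together with the indecomposability of $E$ closes the argument immediately; the necessity direction is essentially a bookkeeping exercise with the Coarea Formula and the Decomposition Theorem \ref{thm:ACMM}.
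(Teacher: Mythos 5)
Your proof is correct and follows essentially the same route as the paper: necessity via sign-definiteness, a truncation argument forcing $f$ to be a two-valued function, and the decomposition theorem for indecomposability; sufficiency via upgrading $V(f)=tV(g)+(1-t)V(h)$ to an equality of total-variation measures, then De Giorgi and Dolzmann--M\"uller. The only cosmetic differences are that you invoke the Coarea Formula directly to get BV-norm additivity under truncation (the paper packages this as a separate chain-rule lemma proved via mollification and lower semicontinuity), and in the two-values step you exploit the forced proportionality $F_s=\|F_s\|_{\BV}f$ for all $s$ rather than picking two Lebesgue points and deriving a contradiction.
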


We will need the following auxiliary 

\begin{lemma}\label{lemma:chin_rule}
Let $f \in \BV_{\loc}(\R^d)$ and let, for any $\lambda \in \R$, 
\begin{equation*}
f_{\lambda}^+ :=\max\{f-\lambda,0\} \qquad \text { and } f_{\lambda}^- := f-f_\lambda^+ =  \min\{\lambda, f\}.
\end{equation*}
Then for every open set $\Omega \Subset \R^d$ it holds  
\begin{equation}\label{eq:chain_modificata}
|Df|(\Omega) = |D(f_{\lambda}^+)|(\Omega) + |D(f_{\lambda}^-)|(\Omega). 
\end{equation}
\end{lemma}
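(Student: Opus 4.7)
The plan is to deduce the identity from the Coarea Formula~\eqref{eq:coarea_borel}. First, since $f = f_\lambda^+ + f_\lambda^-$ pointwise, subadditivity of the total variation gives the easy inequality $|Df|(\Omega) \le |Df_\lambda^+|(\Omega) + |Df_\lambda^-|(\Omega)$ for free; the content of the lemma is the reverse inequality, which will in fact fall out directly from the Coarea computation below.

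Next, I would describe the superlevel sets of $f_\lambda^\pm$ in terms of those of $f$. A direct check gives
\begin{equation*}
\{f_\lambda^+ \ge t\} = \begin{cases} \{f \ge \lambda + t\}, & t > 0,\\ \R^d, & t \le 0,\end{cases} \qquad \{f_\lambda^- \ge t\} = \begin{cases} \{f \ge t\}, & t \le \lambda,\\ \emptyset, & t > \lambda.\end{cases}
\end{equation*}
The trivial superlevel sets $\R^d$ and $\emptyset$ both have empty essential boundary, so they contribute nothing to the Coarea integral, which is the key point that will allow the change of variables below to compose back to the full real line.

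Finally, since $\Omega \Subset \R^d$ and $f \in \BV_\loc(\R^d)$, we have $f, f_\lambda^\pm \in L^1(\Omega) \cap \BV(\Omega)$, so the Coarea Formula~\eqref{eq:coarea_borel} applies to each of them on the Borel set $B = \Omega$. Inserting the description of the superlevel sets above and performing the translation $s = \lambda + t$ in the integral coming from $f_\lambda^+$, we obtain
\begin{align*}
|Df_\lambda^+|(\Omega) + |Df_\lambda^-|(\Omega) &= \int_0^\infty \H^{d-1}\bigl(\partial^e\{f \ge \lambda+t\}\cap \Omega\bigr)\,dt + \int_{-\infty}^\lambda \H^{d-1}\bigl(\partial^e\{f \ge t\}\cap\Omega\bigr)\,dt\\
&= \int_\R \H^{d-1}\bigl(\partial^e\{f \ge s\}\cap\Omega\bigr)\,ds = |Df|(\Omega),
\end{align*}
which is the asserted equality. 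I do not anticipate any substantive obstacle: the lemma is essentially a ``layer-cake'' splitting of the total variation measure $|Df|$ at height $\lambda$, and the only care needed is to discard the two trivial superlevel sets before changing variables.
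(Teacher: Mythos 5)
Your proof is correct, and it takes a genuinely different route from the paper's. The paper's argument first reduces to the case $\lambda=0$ by working with $g=f-\lambda$, then invokes the chain rule in $W^{1,1}$ for the smooth approximants given by strict (Anzellotti--Giaquinta) convergence and uses lower semicontinuity of the variation to pass to the limit, obtaining only the inequality $\ge$ (the reverse being the triangle inequality). Your argument instead computes both sides by the Coarea Formula after observing that the superlevel sets of $f_\lambda^\pm$ are exactly the superlevel sets of $f$ above, resp.\ below, level $\lambda$, together with the two trivial sets $\R^d$ and $\emptyset$ which have empty essential boundary; a translation in the $t$-variable then recombines the two integrals into the coarea integral for $f$, giving the equality in one stroke. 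This is cleaner in two respects: it avoids the approximation/lower-semicontinuity machinery entirely, and it treats general $\lambda$ directly without the reduction to $\lambda=0$. The one point worth being careful about (and which you essentially handle) is that \eqref{eq:coarea_borel} as stated in the text is for $u\in\BV(\R^d)$, whereas here the functions are only in $\BV_\loc(\R^d)$; but since $\Omega\Subset\R^d$, one may either apply the coarea formula on a bounded open set $\Omega'\supset\overline\Omega$ where the restrictions lie in $\BV(\Omega')$, or note that the formula is local. Either way the computation goes through, and both superlevel sets $\R^d$ (perimeter $0$) and $\emptyset$ contribute nothing, so the change of variables is legitimate.
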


\begin{proof}
	To begin we consider the case $\lambda=0$ and we notice that, in this case, the decomposition of $f$ into $f^+_\lambda + f_\lambda^-$ coincides with the standard decomposition into positive/negative part: 
	\begin{equation*}
	f_{0}^+ = f^+ \quad \text{ and } \quad  f_0^- := - f^-. 
	\end{equation*}	
	If $f \in W_{\loc}^{1,1}(\R^d)$ then, fixed $\Omega \Subset \R^d$, it is enough to apply the Chain Rule Theorem \cite[Section 4.2.2, Theorem 4(iii)]{EG}. 
	For the general case, consider a sequence $(f_n)_n \subset W^{1,1}(\Omega) \cap C^\infty(\Omega)$ with $f_n \to f$ strongly in $L^1(\Omega)$ and $|Df_n|(\Omega) \to |Df|(\Omega)$ (such a sequence can be obtained using Anzellotti-Giaquinta's Theorem, see e.g. \cite[Section 5.2.2]{EG}). Then for every $n \in \N$ it holds 
	\begin{equation*}
	| Df_n |(\Omega) = | D(f_n^+) |(\Omega) + |D(f_n^-)|(\Omega) 
	\end{equation*}  
	hence 
	\begin{equation*}
	\begin{split}
	| Df|(\Omega) = \lim_n | Df_n |(\Omega) & = \liminf_n | Df_n |(\Omega) \\
	& = \liminf_n ( | D(f_n^+) |(\Omega)  + |D(f_n^-)|(\Omega) )\\
	& \ge \liminf_n | D(f_n^+) |(\Omega) +\liminf_n  |D(f_n^-)|(\Omega) \\
	& \ge | D(f^+)|(\Omega) + | D(f^-)|(\Omega) 
	\end{split}
	\end{equation*}
	where the last inequality is a consequence of the l.s.c. of the total variation, since $f_n^+ \to f^+$ and $f_n^- \to f^-$ in $L^1(\Omega)$.
	The statement is thus proved for $\lambda=0$; to obtain the general case, we can apply the above claim to the function $g:=f-\lambda \in \BV_\loc(\R^d)$, noticing that 
	\begin{equation*}
	g^+ = f^+_\lambda, \qquad g^- = \lambda -  f^-_\lambda
	\end{equation*}
	and 
	\begin{equation*}
	Dg = Df, \qquad Dg^+ = Df^+_\lambda, \qquad  Dg^- = -Df^-_\lambda. 
	\end{equation*}
	whence \eqref{eq:chain_modificata}.
\end{proof}

We now show the following Lemma, which ensures that extreme points lie in the set of normalized indicators of sets of finite perimeter. Recall that for any set of finite perimeter $E \subset \R^d$, either $E$ or $E^c$ has finite Lebesgue measure by the isoperimetric inequality \eqref{eq:isoperimetry}. 

\begin{lemma}\label{lemma:extreme_in_perimeters}
	Let $f\in X$ be an extreme point of the closed unit ball $B_1^X$. Then there exists a set $E\subseteq \R^d$ with positive, finite perimeter and positive, finite measure $\L^d(E)<\infty$ and a constant $\sigma \in \{\pm 1\}$ such that $f = \sigma \frac{1}{\| \1_E\|_{\BV}}\1_E$. 
\end{lemma}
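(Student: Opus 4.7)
My plan is to combine three elementary reductions with the truncation identity of Lemma~\ref{lemma:chin_rule}. First, I would verify that $\|f\|_{\BV}=1$: otherwise, for any nonzero $g\in X$ with $\|g\|_{\BV}\le 1-\|f\|_{\BV}$, the identity $f=\tfrac12(f+g)+\tfrac12(f-g)$ displays $f$ as a nontrivial convex combination of elements of $B_1^X$. Next, I would \emph{reduce to $f\geq 0$}. Applying Lemma~\ref{lemma:chin_rule} at $\lambda=0$ on balls $\Omega=B_R$ and letting $R\uparrow\infty$ (using inner regularity of the Radon measures involved) yields $|Df|(\R^d)=|Df^+|(\R^d)+|Df^-|(\R^d)$, which combined with $\|f\|_1=\|f^+\|_1+\|f^-\|_1$ gives $\|f\|_{\BV}=\|f^+\|_{\BV}+\|f^-\|_{\BV}=1$. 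If both $\|f^{\pm}\|_{\BV}>0$, then $f=\|f^+\|_{\BV}\cdot\tfrac{f^+}{\|f^+\|_{\BV}}+\|f^-\|_{\BV}\cdot\tfrac{-f^-}{\|f^-\|_{\BV}}$ is a nontrivial convex combination of two distinct unit-norm functions (they have disjoint essential supports), contradicting extremality. So $f$ has constant sign; I take $f\geq 0$ with $\sigma=+1$, handling the case $f\le 0$ by replacing $f$ with $-f$ and setting $\sigma=-1$.

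For the main step I would repeat the manoeuvre at an arbitrary level $\lambda>0$. Since $f\geq 0$ forces $f_\lambda^{\pm}\geq 0$ with $f_\lambda^{+}+f_\lambda^{-}=f$ pointwise, Lemma~\ref{lemma:chin_rule} again yields $\|f\|_{\BV}=\|f_\lambda^{+}\|_{\BV}+\|f_\lambda^{-}\|_{\BV}=1$. Set $\alpha:=\|f_\lambda^{+}\|_{\BV}\in[0,1]$. Extremality of $f$ then forces exactly one of: (a) $\alpha=0$, i.e.\ $f\leq\lambda$ a.e.; (b) $\alpha=1$, which means $f_\lambda^{-}\equiv 0$ and (since $\lambda>0$ and $f\ge 0$) $f\equiv 0$, ruled out by $\|f\|_{\BV}=1$; or (c) $\alpha\in(0,1)$ and $\tfrac{f_\lambda^{+}}{\alpha}=\tfrac{f_\lambda^{-}}{1-\alpha}$ a.e. In case (c), testing the equality pointwise on $\{f>\lambda\}$ gives $(1-\alpha)(f-\lambda)=\alpha\lambda$, so $f\equiv\lambda/(1-\alpha)$ on that set; on $\{0<f\leq\lambda\}$ we have $f_\lambda^{+}=0$ and $f_\lambda^{-}=f>0$, forcing (as $\alpha>0$) that this set is negligible. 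Hence $f=c\,\1_{E}$ a.e.\ with $c:=\lambda/(1-\alpha)>0$ and $E:=\{f>\lambda\}$.

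To finish, I would choose $\lambda>0$ small enough that $\L^d(\{f>\lambda\})>0$, which is possible because $f\not\equiv 0$; this rules out (a) and forces case (c), so $f=c\,\1_E$. Finiteness and positivity of $\L^d(E)$ and $P(E)$ follow from $f\in\BV(\R^d)\subset L^1(\R^d)$, $f\not\equiv 0$, and the fact that $\L^d(\R^d)=\infty$ for $d\geq 1$ (which prevents $\1_E$ from being essentially constant and hence forces $P(E)>0$). Normalization $\|f\|_{\BV}=1$ then fixes $c=1/\|\1_E\|_{\BV}$, yielding the stated form. The only mildly delicate point is the passage of Lemma~\ref{lemma:chin_rule} from relatively compact $\Omega$ to the whole space, which is just inner regularity of the Radon measures $|Df|$, $|Df_\lambda^{\pm}|$; the rest is a direct exploitation of extremality via the explicit convex decompositions furnished by truncation.
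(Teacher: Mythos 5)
Your proposal is correct and follows essentially the same strategy as the paper: reduce to $\|f\|_{\BV}=1$ and $f\ge 0$, then apply the truncation identity of Lemma~\ref{lemma:chin_rule} at a level $\lambda>0$ to obtain the convex splitting $f=\alpha\cdot\tfrac{f_\lambda^+}{\alpha}+(1-\alpha)\cdot\tfrac{f_\lambda^-}{1-\alpha}$ with $\alpha=\|f_\lambda^+\|_{\BV}$. The only difference is in the middle step: the paper argues by contradiction from the existence of two Lebesgue points with distinct nonzero values, while you read the conclusion $f=c\,\1_{\{f>\lambda\}}$ directly off the equality $f_\lambda^+/\alpha=f_\lambda^-/(1-\alpha)=f$ forced by extremality when $\alpha\in(0,1)$ — a mild streamlining of the same argument, not a genuinely different route.
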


\begin{proof}
	\emph{Step 1. Any extreme function has constant sign.}
	Let $f\in X$ be extreme of $B_1^X$. Then, by standard facts, we have necessarily $\|f\|_{\BV}=1$.
	Let us decompose $f$ into positive and negative part as $f=f^+ - f^-$. By the very definition of Lebesgue integral for signed functions we have that 
	\begin{equation*}
	\| f \|_{1} = \| f^+ \|_1 + \|f^-\|_1 
	\end{equation*}
	while, by Lemma \ref{lemma:chin_rule} with $\lambda=0$, we have that 
	\begin{equation*}
	\|Df\|_{\M} = \|Df^+\|_{\M} + \|Df^-\|_{\M}.  
	\end{equation*}
	Adding up the two equalities, we find out that
	\begin{equation*}
	\| f \|_{\BV} = \| f^+ \|_{\BV} + \|f^-\|_{\BV} 
	\end{equation*}
	and this can be used to decompose $f$ into a convex linear combination of two \emph{signed} functions with unit $\BV$ norm: 
	\begin{equation}\label{eq:convex}
	f = \|f^+\|_{\BV} \cdot \frac{f^+}{\|f^+\|_{\BV}} + \|f^-\|_{\BV} \cdot \frac{-f^-}{\| f^-\|_{\BV}}.
	\end{equation}
	Hence any extremal point is necessarily a function with constant sign and, without any loss of generality, we consider $f \ge 0$.
	
	\emph{Step 2. Any extreme function attains at most one non-zero value.}
	 We now would like to prove that $f(x) \in \{0, \alpha\}$ for some $\alpha>0$ for $\mathscr L^d$- a.e. $x \in \R^d$.
		
	Suppose by contradiction that it is not true: hence, there exist two points $x_1, x_2$ such that $f(x_1) \ne 0$, $f(x_2) \ne 0$ and also $f(x_1) \ne f(x_2)$. Without any loss of generality, suppose $f(x_1) < f(x_2)$. We can also assume that $x_1, x_2$ are Lebesgue points of $f$ (this property being satisfied almost everywhere by standard facts).
	Consider an arbitrary $\lambda \in (f(x_1), f(x_2))$ and define the non-negative functions 
	\begin{equation*}
	f_{\lambda}^+ :=\max\{f-\lambda,0\} \qquad \text { and } f_{\lambda}^- := f-f_\lambda^+ =  \min\{\lambda, f\}. 
	\end{equation*}
	By Lemma \ref{lemma:chin_rule} we deduce
	\begin{equation*}
	\|Df\|_{\M} = \|D(f_{\lambda}^+)\|_{\M} + \|D(f_\lambda^-)\|_{\M},
	\end{equation*}
	while from the pointwise equality $f_\lambda^+ + f_{\lambda}^- = f$, together with non-negativity, we get 
	\begin{equation*}
	\| f \|_{1} = \| f^+_\lambda \|_1 + \|f^-_\lambda\|_1,
	\end{equation*}
	and thus 
	\begin{equation*}
	\| f \|_{\BV} = \| f^+_\lambda \|_{\BV}+ \|f^-_\lambda\|_{\BV}. 
	\end{equation*}
	In particular, we can decompose 
	\begin{equation}\label{eq:convex_lambda}
	f = \|f^+_\lambda\|_{\BV} \cdot \underbrace{ \frac{f^+_\lambda}{\|f^+_\lambda\|_{\BV}}}_{\in B_1^X} + \|f^-_\lambda\|_{\BV} \cdot \underbrace{\frac{f^-_\lambda}{\| f^-_\lambda\|_{\BV}}}_{\in B_1^X}.
	\end{equation}
	Notice that the choice of $\lambda \in (f(x_1), f(x_2))$ together with the fact that $f(x_1)\ne 0 \ne f(x_2)$ grant that the decomposition \eqref{eq:convex_lambda} is non-trivial and well-posed, in the sense that: 
	
	\begin{enumerate}
	\item the functions $f^\pm_\lambda$ are linearly independent: if $a f^+_\lambda + b f^-_\lambda = 0$ for $a,b \in \R$, then evaluating at $x_1$ we deduce 
	\begin{equation*}
	b f(x_1) = 0 \Rightarrow b = 0
	\end{equation*}
	and evaluation at $x_2$ yields 
	\begin{equation*}
	a (f(x_2)-\lambda) = 0\Rightarrow a = 0; 
	\end{equation*}
	
	\item we have  $\Vert f^\pm_\lambda \Vert_{\BV} >0$: indeed, if it were e.g.  $\|f^-_\lambda\|_{\BV}=0$, then $f = f^+_\lambda$ a.e. which means 
	\begin{equation}\label{eq:geq}
	f(x) \ge \lambda \qquad \text{ for } \mathscr L^d\text{-a.e.} \,  x \in \R^d. 
	\end{equation}
	On the other hand, $x_1$ is a Lebesgue point of $f$ with Lebesgue value $f(x_1) < \lambda$, so by definition 
	\begin{equation*}
	\lambda > f(x_1) = \lim_{r \to 0} \fint_{B_r(x_1)} f(y) \, dy \overset{\eqref{eq:geq}}{\ge} \lim_{r \to 0} \fint_{B_r(x_1)} \lambda \, dy =\lambda, 
	\end{equation*}
	which is a contradiction. 
	\end{enumerate}

		Thus \eqref{eq:convex_lambda} is a non-trivial, convex decomposition of $f$ which contradicts extremality: the contradiction stems from the assumption that there exists two points $x_1,x_2$ such that $f(x_1) \ne 0$, $f(x_2) \ne 0$ and $f(x_1)\ne f(x_2)$. So we must have $f(x) \in \{0,\alpha\}$ for a.e. $x$ for some $\alpha >0$. 
		
		\emph{Step 3. Any extreme function is an indicator function.} From \emph{Step 2} we immediately deduce 
		\begin{equation*}
		f(x) = \alpha \1_{E}, \qquad \text{ where } E:=\{x \in \R^d: f(x)=\alpha\}.
		\end{equation*}
		The set $E$ has finite perimeter because $f \in \BV(\R^d)$	and, being $\|f\|_{\BV} = 1$, we deduce that necessarily $\alpha= \|\1_E\|_{\BV}^{-1}$. This concludes the proof. 
\end{proof}

We can now prove the main result of this section. 

\begin{proof}[Proof of Proposition \ref{prop:extreme_points_of_BV}] We split the proof into two steps. 
	
	\emph{Sufficiency.} Let $E \subset \R^d$ be a set of positive, finite perimeter and assume it is indecomposable. Let $c=\frac{1}{P(E)}$ and let us prove that $f := c \1_E$ is an extreme point of $B_1^X$. Assume that for some functions $g,h \in B_1^X$ and $\lambda \in [0,1]$ we can write 
	\begin{equation*}
	f = \lambda g + (1-\lambda)h
	\end{equation*}
	and let us prove that necessarily $g=c\1_E$ and $h=c\1_E$. 
	Since $\| f \|_{\BV} = 1$ we have that 
	\begin{equation*}
	1 \le \lambda \| g \|_{\BV} + (1-\lambda) \|h\|_{\BV}
	\end{equation*}
	and we claim that actually equality holds. If it were 
	\begin{equation*}
	1 < \lambda \| g \|_{\BV} + (1-\lambda) \|h\|_{\BV} 
	\end{equation*}
	then we would get, being $f,g \in B_1^X$
	\begin{equation*}
	1 < \lambda \| g \|_{\BV} + (1-\lambda) \|h\|_{\BV} \le \lambda + (1-\lambda) = 1,
	\end{equation*}
	a contradiction. In a complete similar way, one can prove that $\|g\|_{\BV}=1=\|h\|_{\BV}$. All in all, we can represent 
	\begin{equation}\label{eq:sum_functions_BV}
	\1_E = \phi + \psi 
	\end{equation}
	with 
	\begin{equation}\label{eq:additive_BV}
	\| \1_E \|_{\BV}= \|\phi\|_{\BV} + \|\psi\|_{\BV} 
	\end{equation}
	being $\phi = c^{-1}\lambda g$ and $\psi = c^{-1}(1-\lambda) h$. Notice that $\phi, \psi$ have the same sign a.e., otherwise we would have 
	\begin{equation*}
	\int_{\R^d} \vert\phi(x) + \psi(x)\vert \, dx <	\int_{\R^d} | \phi(x) | + |\psi(x)| \, dx  
	\end{equation*}
	which would yield
	\begin{equation*}
	 \|\phi + \psi\|_{\BV} =  \|\phi + \psi\|_{1} + \| D (\phi + \psi)\|_{\M} <   \|\phi \|_1 + \| \psi\|_{1} + \| D\phi \|_{\M} + \| D\psi\|_{\M} =  \|\phi \|_{\BV} +\| \psi\|_{\BV}, 
	\end{equation*}
	contradicting \eqref{eq:additive_BV}. Since $\1_E = \phi + \psi$, we have $\phi, \psi \ge 0$ a.e. and therefore
	\begin{equation}\label{eq:zero_out}
	\phi = \psi = 0 \qquad \text{ a.e. on } E^c. 
	\end{equation} 
	Notice furthermore that it holds 
\begin{equation}\label{eq:sum_of_measures_BV}
| D \1_E | = |D\phi| + |D\psi| \qquad \text{ as measures in } \R^d. 
\end{equation}
Indeed, by \eqref{eq:sum_functions_BV} and the triangle inequality we get $| D \1_E | \le |D\phi + D\psi| \le |D\phi| + |D\psi|$; the converse inequality then follows exploiting \eqref{eq:additive_BV}. 
In particular, computing \eqref{eq:sum_of_measures_BV} on the Borel set $E^{1}$ it follows
\begin{equation*}
|D\phi|(E^{1}) + |D\psi|(E^{1})  = | D \1_E |(E^{1}) = 0
\end{equation*}
where the last equality follows from De Giorgi's Theorem. Hence
\begin{equation*}
|D\phi|(E^{1}) = 0 = |D\psi|(E^{1}).
\end{equation*}
By Proposition \ref{prop:dolzmann} and by the indecomposability of $E$, there exist constants $c_1,c_2 \in \R$ such that 
\begin{equation}\label{eq:sets_BV}
\phi(x) = c_1, \qquad \psi(x) = c_2 \qquad \text{ a.e. in } E. 
\end{equation}
In particular, combining \eqref{eq:sets_BV} together with \eqref{eq:zero_out} we obtain 
\begin{equation*}
\phi(x) = c_1 \1_E(x), \qquad \psi(x) = c_2\1_E(x) \qquad \text{ a.e. in } \R^d 
\end{equation*}
and this in turn implies that 
\begin{equation*}
g = \alpha \1_E(x),
\end{equation*}
for some $\alpha  \in \R$. Being $\Vert g \Vert_{\BV}=1$ we obtain that the constant has to be 
\begin{equation*}
\alpha =\frac{1}{P(E)}.
\end{equation*}
One can argue similarly with $h$ and the conclusion is now achieved: we have proved that the only convex combination of elements in $B_1^X$ representing $f$ is the trivial one, i.e. $f$ is an extreme point of $B_1^X$.
 
	\emph{Necessity.} By Lemma \ref{lemma:extreme_in_perimeters}, we can already infer that there exists a set $E\subseteq \R^d$ with finite perimeter and $\sigma \in \{\pm 1\}$ such that $f =\sigma \frac{1}{\| \1_E\|_{\BV}}\1_E$ a.e. w.r.t. the Lebesgue measure. Now we prove that $E$ is indecomposable. Suppose by contradiction that $E$ is a decomposable set, i.e. $E = A \cup B$ with $A\cap B = \emptyset$ and $P(E) = P(A) + P(B)$. Since by additivity of the Lebesgue measure it holds $\L^d(E) = \L^d(A) + \L^d(B)$, we have 
	\begin{equation*}
	\|\1_E \|_{\BV} = 	\|\1_A \|_{\BV} +	\|\1_B \|_{\BV}.
	\end{equation*}
	Hence
	\begin{equation*}
	\frac{1}{\|\1_E \|_{\BV} } \1_E = \frac{\|\1_A \|_{\BV} }{\|\1_E \|_{\BV} } \underbrace{\frac{\1_A}{\|\1_A \|_{\BV} }}_{\in B_1^X} + \frac{\|\1_B \|_{\BV} }{\|\1_E \|_{\BV} } \underbrace{\frac{\1_B}{\|\1_B \|_{\BV} }}_{\in B_1^X}
	\end{equation*}
	is a representation of $f$ as a non-trivial convex combination of elements of $B_1^X$, contradicting extremality. Therefore if $\frac{\1_E}{P(E)}$ is an extreme point then $E$ has to be indecomposable.
\end{proof}

\section{Extreme points of the unit ball of \texorpdfstring{$\FV$}{FV} functions in \texorpdfstring{$\R^d$}{Rd}}

\begin{definition} We define the space $\FV(\R^d)$ as the function space 
	\begin{equation*}
	\FV(\R^d) :=  \{f \in L^{1^*}(\R^d): V(f)<+\infty\}. 
 	\end{equation*}
 \end{definition}

We recall $V(f)=V(f, \R^d)$ is the variation of a locally integrable function, see Definition \ref{def:variation}, while $1*$ is defined in \eqref{eq:sobolev_conj}. 
 
\begin{remark}
	It is easy to see that $\BV(\R^d) \subset \FV(\R^d) \subset \BV_{\loc}(\R^d)$ and both inclusion are strict. Indeed, any constant function is certainly locally integrable with zero total variation, but it is not in $L^p(\R^d)$ for any $p$. On the other hand, the function $f \colon \R^d \to \R$ defined by 
	\begin{equation*}
	f(x)= g(|x|), \qquad \text{ where } g(s) := \min\left\{1, \frac{1}{s^d}\right\} 
	\end{equation*}
	is in $\FV(\R^d)$ but not in $\BV(\R^d)$. Let us verify this claim: 
	\begin{equation*}
	\int_{\R^d} f(x) dx = C_d \int_{0}^{+\infty} g(s) s^{d-1} \, ds = + \infty  
	\end{equation*}
	while, taking into account that $1^* := \nicefrac{d}{d-1}$ we have 
	\begin{equation*}
	\int_{\R^d} |f(x)|^{1^*} dx = C_d \int_{0}^{+\infty} g(s)^{\nicefrac{d}{d-1}} s^{d-1} \, ds = \widetilde{C_d} \left( 1+  \int_{1}^{+\infty}  s^{\nicefrac{1-2d}{d-1}} \, ds  \right) < +\infty. 
	\end{equation*}
	Notice that the variation of $f$ is finite
	\begin{equation*}
	V(f) = C_d \int_{1}^{+\infty} \frac{1}{s^{d+1}} s^{d-1} \, ds = C_d \int_{1}^{+\infty}  \frac{1}{s^2}<+ \infty . 
	\end{equation*}
 	\end{remark}

We now prove that the map $\Vert \cdot \Vert_{\FV} \colon \FV(\R^d)\ni f \mapsto \Vert f \Vert_{\FV}= V(f)$ gives to $\FV(\R^d)$ the structure of a normed space. 
\begin{proposition} 
The space $Y := \left(\FV(\R^d), \Vert \cdot \Vert_{\FV}\right)$ is a normed space. 
\end{proposition}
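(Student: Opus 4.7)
The plan is to verify the three defining properties of a norm (non-negativity with definiteness, absolute homogeneity, and the triangle inequality), together with the fact that $\FV(\R^d)$ is actually a vector space so that the statement is even meaningful. All of these will follow from the supremum definition of the variation (Definition~\ref{def:variation}) combined with Theorem~\ref{thm:347AFP}.

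First I would check closure of $\FV(\R^d)$ under the two vector space operations. For $f,g \in \FV(\R^d)$ the function $f+g$ is in $L^{1^*}(\R^d)$ by the usual triangle inequality in $L^{1^*}$, and directly from the supremum definition one has the subadditivity bound $V(f+g) \le V(f) + V(g)$, since for any admissible test field $\bm \phi$ we may split $\int (f+g) \dive \bm \phi = \int f \dive \bm \phi + \int g \dive \bm \phi$ and bound each summand by $V(f)$ and $V(g)$ respectively. Similarly, for $\lambda\in\R$ we have $\lambda f \in L^{1^*}(\R^d)$ and from the sup definition $V(\lambda f) = |\lambda| V(f)$, using that $\bm\phi \mapsto \mathrm{sgn}(\lambda)\bm \phi$ is a bijection of the admissible test fields. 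This simultaneously yields closure under scalar multiplication and absolute homogeneity of $\Vert\cdot\Vert_{\FV}$.

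The triangle inequality $\Vert f+g \Vert_{\FV} \le \Vert f \Vert_{\FV} + \Vert g \Vert_{\FV}$ is precisely the subadditivity estimate established in the previous step, and non-negativity $\Vert f \Vert_{\FV} \ge 0$ is immediate since the defining supremum includes the test field $\bm\phi \equiv 0$.

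The only delicate point, and the main obstacle, is definiteness: I must show that $V(f) = 0$ forces $f = 0$ $\L^d$-a.e. Here I would invoke Theorem~\ref{thm:347AFP}: since $f \in L^1_{\loc}(\R^d)$ and $V(f) = 0 < \infty$, there exists a unique constant $m \in \R$ with $\Vert f - m \Vert_{L^{1^*}(\R^d)} \le \gamma V(f) = 0$. Hence $f \equiv m$ almost everywhere. On the other hand, $f \in L^{1^*}(\R^d)$ by definition of $\FV(\R^d)$, and therefore the constant function $m$ belongs to $L^{1^*}(\R^d)$ as well. Since the only constant lying in $L^{1^*}(\R^d)$ is $0$ (this is where finiteness of $1^*$, i.e.\ $d>1$, is used, and in dimension $d=1$ the same conclusion holds by interpreting $L^\infty$ convergence together with the additional information $f \in L^{1^*}(\R^d)$ provided by Theorem~\ref{thm:347AFP} more carefully), we conclude $m = 0$ and thus $f = 0$ a.e., completing the verification that $\Vert\cdot\Vert_{\FV}$ is a norm on $\FV(\R^d)$.
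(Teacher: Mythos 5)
Your proof is correct and follows the same route as the paper: homogeneity, subadditivity and non-negativity are read off directly from the supremum definition of the variation, and definiteness follows from Theorem~\ref{thm:347AFP}, which gives $f=m$ a.e.\ for some constant $m$, whence $m=0$ because $f\in L^{1^*}(\R^d)$ and no nonzero constant lies in $L^{d/(d-1)}(\R^d)$ when $d\geq 2$. The one inaccuracy is your parenthetical remark that the $d=1$ case can be recovered by a more careful reading of Theorem~\ref{thm:347AFP}: when $1^*=\infty$ every constant belongs to $L^\infty(\R)$, so $V$ is genuinely only a seminorm on $\FV(\R)$ and there is no fix; the paper tacitly assumes $d\geq 2$ here, which is the only case it uses.
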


\begin{proof}
	Positivity and 1-homogeneity are clear from the definition of $\Vert \cdot \Vert_{\FV}$ and the triangle inequality as well. We have to prove only definiteness: for, let $f \in \FV(\R^d)$ with 
	\begin{equation}
	\Vert f \Vert_{\FV} = V(f) = 0.
	\end{equation}
	Applying Theorem \ref{thm:347AFP} we deduce that there exist $m \in \R$ and a constant $\gamma >0$ such that 
	\begin{equation*}
	\| f - m \|_{1^*} \le \gamma V(f)
	\end{equation*}
	whence $\| f - m \|_{1^*}=0$ and $f=m$ almost everywhere. Being $f \in L^{1^*}(\R^d)$ the only possibility is that $m=0$ hence the proposition is proved.
\end{proof}

We now aim at characterizing extreme points of $B_1^Y$, the closed unit ball in $Y$. Observe that, if $f$ is the characteristic function of a measurable set $A$, variation and perimeter coincide, i.e. 
\begin{equation*}
V(\1_A) = P(A,\R^d) =P(A). 
\end{equation*}

\begin{proposition}[Extreme points of unit ball in $\FV(\R^d)$]\label{prop:extreme_points_of_FV} A function $f \in Y$ is an extreme point of $B_1^Y$ if and only there exists a simple set $E \subset \R^d$ of positive, finite perimeter and a constant $\sigma \in \{\pm 1\}$ such that 
	\begin{equation*}
	f(x) = \sigma \frac{\1_E(x)}{P(E)}, \qquad \L^d\text{-a.e. } x \in \R^d. 
	\end{equation*}
\end{proposition}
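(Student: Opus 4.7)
The plan is to mirror the proof of Proposition~\ref{prop:extreme_points_of_BV}, replacing the $L^1$ component of the $\BV$ norm with the $L^{1^*}$ integrability built into $\FV$. The two technical ingredients driving the adaptation are: the global additivity $V(f)=V(f^+_\lambda)+V(f^-_\lambda)$, obtained from Lemma~\ref{lemma:chin_rule} by monotone exhaustion $B_R\uparrow\R^d$ (legitimate since $Df$ is a finite Radon measure for $f\in\FV$); and the fact that any $g\in L^{1^*}(\R^d)$ with $V(g)=0$ must vanish identically by Theorem~\ref{thm:347AFP}, which now plays the role the $L^1$ term played in the $\BV$ argument.

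For necessity, I would first adapt Lemma~\ref{lemma:extreme_in_perimeters} to $\FV$. The sign decomposition $f=f^+-f^-$ and, on a sign-definite extreme $f$, the level-set truncation $f=f^+_\lambda+f^-_\lambda$ at a level $\lambda$ between two distinct positive Lebesgue values $f(x_1)<f(x_2)$ both yield non-trivial convex decompositions in $B_1^Y$, ultimately forcing $f=\sigma\1_E/P(E)$ for some set $E$ of positive finite perimeter. The one extra check is that $f^\pm_\lambda\in\FV$: for $\lambda>0$ the set $\{f>\lambda\}$ has finite measure by Markov's inequality applied to $|f|^{1^*}$, so $f^+_\lambda$ is supported there with $|f^+_\lambda|\le|f|$, and $f^-_\lambda\le\min(\lambda,|f|)$ is in $L^{1^*}$ by the same token. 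The $L^{1^*}$ membership of $f$ itself forces $\L^d(E)<\infty$.

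To upgrade indecomposability of $E$ to simplicity, the decomposable case ($E=A\sqcup B$ with $P(E)=P(A)+P(B)$) is handled exactly as in the $\BV$ proof. If $E$ is indecomposable but has a hole $H$, Proposition~\ref{p-decomposition-th+} applied to the components of $E^c$ with $I_1=\{H\}$ and $I_2$ the remaining components yields $P(E)=P(E\cup H)+P(H)$, and the identity
\[
\frac{\1_E}{P(E)}=\frac{P(E\cup H)}{P(E)}\cdot\frac{\1_{E\cup H}}{P(E\cup H)}+\frac{P(H)}{P(E)}\cdot\frac{-\1_H}{P(H)}
\]
is a non-trivial convex combination in $B_1^Y$ (both summands are in $L^{1^*}$ since $\L^d(E\cup H),\L^d(H)<\infty$), contradicting extremality.

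For sufficiency, given simple $E$ with $P(E)>0$ (so $\L^d(E)<\infty$ by Remark~\ref{rmk:still_holds}) and a decomposition $\1_E/P(E)=\lambda g+(1-\lambda)h$ in $B_1^Y$, I rescale to $\tilde\phi=\lambda P(E)g$ and $\tilde\psi=(1-\lambda)P(E)h$ and combine the norm identity $V(\tilde\phi)+V(\tilde\psi)=V(\1_E)$ with the pointwise triangle inequality for measures to obtain $|D\1_E|=|D\tilde\phi|+|D\tilde\psi|$ as measures. Since $|D\1_E|$ is concentrated on $\mathcal FE$, this forces $|D\tilde\phi|$ to vanish on both $E^1$ and $E^0$. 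Proposition~\ref{prop:dolzmann} applied to $E$ and to $E^c$ (indecomposable by Proposition~\ref{prop:char_simple}) then gives $\tilde\phi=a\1_E+b\1_{E^c}$, and $L^{1^*}$ membership forces $b=0$. Arguing symmetrically for $\tilde\psi$ and using $\tilde\phi+\tilde\psi=\1_E$ together with the norm identity pins down $g=h=\1_E/P(E)$. The main obstacle is the simplicity step in necessity, which is genuinely new compared to the $\BV$ case and depends crucially on $-\1_H$ being admissible in $B_1^Y$, i.e., on the absence of an $L^1$ penalty in the $\FV$ norm.
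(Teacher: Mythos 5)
Your proof is correct and follows the same strategy as the paper's: mirror the $\BV$ argument, apply Dolzmann--M\"uller to both $E$ and $E^c$ in the sufficiency direction, and let the built-in $L^{1^*}$ integrability play the role the $L^1$ penalty played for $\BV$ in forcing the constant on the unbounded component to vanish. The one presentational difference is in the simplicity half of necessity: where the paper assumes $E^c = C \sqcup D$ with $P(E^c)=P(C)+P(D)$ and writes $\1_E = \1_{C^c} - \1_D$, you isolate a single hole $H$ and invoke Proposition~\ref{p-decomposition-th+} to obtain $P(E)=P(E\cup H)+P(H)$ and the convex combination through $\1_{E\cup H}$ and $-\1_H$; the two are equivalent (any decomposition of $E^c$ yields a hole of finite measure, and any hole yields a decomposition via Proposition~\ref{p-decomposition-th+}), and your version is arguably more transparent about where simplicity enters. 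You also correctly supply two small checks the paper glosses over when declaring the adaptation of Lemma~\ref{lemma:extreme_in_perimeters} ``verbatim'': that the truncations $f^\pm_\lambda$ stay in $L^{1^*}$ (though the pointwise bound $|f^\pm_\lambda|\le|f|$ already gives this, so the appeal to Markov's inequality for $f^+_\lambda$ is superfluous), and that the local identity of Lemma~\ref{lemma:chin_rule} globalizes by monotone exhaustion since all three total-variation measures are finite for $\FV$ functions.
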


\begin{proof} 
\emph{Sufficiency.} Let $E \subset \R^d$ be a simple set. Let $c=\frac{1}{P(E)}$ and let us prove that $f := c \1_E$ is an extreme point of $B_1^Y$. Assume that for some functions $g,h \in B_1^Y$ and $\lambda \in [0,1]$ we can write 
\begin{equation*}
f = \lambda g + (1-\lambda)h
\end{equation*}
and let us prove that necessarily $g=c\1_E$ and $h=c\1_E$. Since $\| f \|_{\FV} = 1$ we have that 
\begin{equation*}
1 \le \lambda \| g \|_{\FV} + (1-\lambda) \|h\|_{\FV}
\end{equation*}
and we claim that actually equality holds. If it were 
\begin{equation*}
1 < \lambda \| g \|_{\FV} + (1-\lambda) \|h\|_{\FV} 
\end{equation*}
then we would get, being $f,g \in B_1^Y$
\begin{equation*}
1 < \lambda \| g \|_{\FV} + (1-\lambda) \|h\|_{\FV} \le \lambda + (1-\lambda) = 1,
\end{equation*}
a contradiction. In a complete similar way, one can prove that $\|g\|_{\FV}=1=\|h\|_{\FV}$.
All in all, we can represent 
\begin{equation}\label{eq:sum_functions}
\1_E = \phi + \psi 
\end{equation}
with 
\begin{equation}\label{eq:additive_FV}
\| \1_E \|_{\FV}= \|\phi\|_{\FV} + \|\psi\|_{\FV} 
\end{equation}
being $\phi = c^{-1}\lambda g$ and $\psi = c^{-1}(1-\lambda) h$. Notice actually that it holds 
\begin{equation}\label{eq:sum_of_measures}
| D \1_E | = |D\phi| + |D\psi| \qquad \text{ as measures in } \R^d. 
\end{equation}
Indeed, by \eqref{eq:sum_functions} and the triangle inequality we get $| D \1_E | \le |D\phi + D\psi| \le |D\phi| + |D\psi|$; the converse inequality then follows exploiting \eqref{eq:additive_FV}. 
In particular, computing \eqref{eq:sum_of_measures} on the Borel set $E^{1}$ it follows
\begin{equation*}
|D\phi|(E^{1}) + |D\psi|(E^{1})  = | D \1_E |(E^{1}) = 0
\end{equation*}
where the last equality follows from De Giorgi's Theorem. Hence
\begin{equation*}
|D\phi|(E^{1}) = 0 = |D\psi|(E^{1}).
\end{equation*}
By Proposition \ref{prop:dolzmann} and by the indecomposability of $E$, there exist constants $c_1,c_2 \in \R$ such that 
\begin{equation}\label{eq:sets}
\phi(x) = c_1, \qquad \psi(x) = c_2 \qquad \text{ a.e. in } E. 
\end{equation}
In particular, $c_1+c_2=1$. In an analogous way, we also get $|D\phi|(E^{0}) = 0 = |D\psi|(E^{0})$: being $E^{0} = (\R^d \setminus E)^{1}$, by indecomposability of $E^c$ (recall Proposition \ref{prop:char_simple}), we conclude again by Proposition \ref{prop:dolzmann} that there exist constants $c_3,c_4 \in \R$ such that 
\begin{equation*}
\phi(x) = c_3, \qquad \psi(x) = c_4 \qquad \text{ a.e. in } E^c. 
\end{equation*}
By the Isoperimetric Inequality \eqref{eq:isoperimetry}, either $E$ or $E^c$ has finite measure and, up to rename everything, consider the case in which $E$ has finite measure. Then $E^c$ must have infinite Lebesgue measure and the functions $\phi,\psi$ are constant functions which are in $L^{1^*}(\R^d)$: thus it must be $c_3=c_4=0$, i.e. 
\begin{equation*}
\phi(x) = 0 = \psi(x) \qquad \text{ a.e. in } E^c. 
\end{equation*}
Combined with \eqref{eq:sets}, this gives that 
\begin{equation*}
\phi(x) = c_1 \1_E(x), \qquad \psi(x) = (1-c_1)\1_E(x) \qquad \text{ a.e. in } \R^d.
\end{equation*}
In particular, we deduce that 
\begin{equation*}
g = \alpha \1_E(x)
\end{equation*}
and being $\Vert g \Vert_{\FV}=1$ we obtain that the constant has to be 
\begin{equation*}
\alpha =\frac{1}{P(E)}.
\end{equation*}
One can argue similarly with $h$ and the conclusion is now achieved: we have proved that the only convex combination of elements in $B_1^Y$ representing $f$ is the trivial one, i.e. $f$ is an extreme point of $B_1^Y$. 
 
\emph{Necessity.} The argument used in the proof of \ref{lemma:extreme_in_perimeters} can be repeated verbatim here, yielding an analogous conclusion: an extreme point $f$ of $B_1^Y$ has necessarily the form 
\begin{equation*}
	f(x) = \sigma \frac{\1_E(x)}{P(E)}, \qquad \L^d\text{-a.e. } x \in \R^d
\end{equation*}
for some set of positive finite perimeter $E \subset \R^d$. It remains thus to show that $E$ has to be simple.  Let us show first that $E$ is indecomposable. Assume that it can be written as $E = A \cup B$ with $A\cap B = \emptyset$ and $P(E) = P(A) + P(B)$.
Hence $\frac{1}{P(E)} \1_E = \frac{P(A)}{P(E)} \frac{1}{P(A)} \1_A + \frac{P(B)}{P(E)} \frac{1}{P(B)} \1_B$ is a convex linear combination of indicators of sets (normalized by perimeter). Therefore if $\frac{1}{P(E)} \1_E$ is an extreme point of the unit ball in $Y$ then $E$ has to be indecomposable.

In view of Proposition \ref{prop:char_simple}, it remains to show that $E^c$ has to be indecomposable, too. For let us suppose that $C,D$ are such that $E^c = C \cup D$ with $C\cap D = \emptyset$ and $P(E^c) = P(C) + P(D)$. Arguing as above, we get that
\begin{equation*}
E = C^c \cap D^c = C' \setminus D \Rightarrow \1_E = \1_{C'} - \1_D, 
\end{equation*}
with $C' = C^c$. 
Consequently, since $P(E)=P(E^c) = P(C) + P(D)$ it holds
\begin{equation*}
\frac{1}{P(E)}\1_{E} = \frac{P(C)}{P(E)} \frac{1}{P(C)} \1_{C'} + \frac{P(D)}{P(E)} \frac{-1}{P(D)} \1_{D}
\end{equation*}
is a convex linear combination of indicators of sets (normalized by perimeter).
Therefore if $\frac{1}{P(E)} \1_E$ is an extremal point of then $E,E^c$ have to be indecomposable, hence $E$ is simple and this concludes the proof.
\end{proof} 
 
\section{Hamiltonian potential of divergence-free vector measures in \texorpdfstring{$\R^2$}{R2}} 

\subsection{Divergence-free measures and $\FV$}

We now define the space of vector-valued divergence-free measures. 
\begin{definition}\label{def:solenoids}We will denote by $\mathcal J(\R^d)$ the following set of vector valued measures: 
\begin{equation*}
\mathcal J(\R^d) := \{\bm \mu \in \mathscr M(\R^d;\R^d): \, \dive \bm \mu = 0 \}
\end{equation*}
where the divergence operator is understood in the sense of distributions. 
\end{definition}
The space $\mathcal J$ is a real vector space under the usual operations of additions of measures and multiplication by real numbers and it can be equipped with a norm given by the total variation: 
\begin{equation*}
\|\bm \mu\|_{\mathcal J}:= |\bm \mu|(\R^2). 
\end{equation*}

\begin{remark} As already observed in the Introduction, an important (somehow paradigmatic) example of a measure belonging to $\mathcal J$ is the one associated to a Lipschitz closed curve: if $\gamma \colon [0,1] \to \R^2$ is a Lipschitz map, injective on $[0,1)$ and with $\gamma(0)=\gamma(1)$ we can define the measure $\bm \mu_\gamma \in \mathcal M(\R^2; \R^2)$ to be 
\begin{equation*}
\langle \Phi , \bm \mu_\gamma \rangle := \int_{\R^2} \Phi(z) \, d\H^1 \rest_{\gamma([0,1])}(z) \, \qquad \forall \Phi \in C^0(\R^2)^2
\end{equation*}
which, by Area formula, can also be written as 
\begin{equation*}
\langle \Phi ,\bm \mu_\gamma \rangle =  \int_0^1 \Phi(\gamma(t)) \cdot \gamma'(t) \, dt .
\end{equation*}
Notice that this definition is well-posed, in the sense that it does not depend on the parametrization $\gamma$ of the curve. It is easy to see that $\dive \bm \mu_\gamma=0$ in the sense of distributions, as a consequence of the fact that $\gamma(0)=\gamma(1)$, so $\bm \mu_\gamma \in \mathcal J(\R^2)$.
\end{remark}

The following proposition establishes a functional analytic connection between $\mathcal J(\R^2)$ and $\FV(\R^2)$.

\begin{proposition}\label{prop:isomorphism}
	The map
	\begin{equation*}
	\begin{split}
	\nabla^\perp \colon \FV(\R^2) & \to \mathcal J(\R^2)  \\
	 f & \mapsto \bm \mu:= \nabla^\perp f = (- \partial_y f,\partial_x f )
	\end{split}
	\end{equation*}
	is an isometric isomorphism. 
\end{proposition}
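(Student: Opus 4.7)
The plan is to treat the four pieces in turn: the map is well-defined, it is an isometry, it is injective, and (the main step) it is surjective onto $\mathcal J(\R^2)$.

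Well-definedness and the isometry property are essentially bookkeeping. If $f\in\FV(\R^2)$ then the distributional gradient $Df$ is a (finite) $\R^2$-valued Radon measure with $|Df|(\R^2)=V(f)$, by the definition of $\FV(\R^2)$ together with the identification $V(f)=|Df|(\R^2)$ recalled after Definition \ref{def:variation}. The measure $\nabla^\perp f$ is just the pointwise $90^\circ$ rotation of the polar of $Df$, so it belongs to $\mathscr M(\R^2;\R^2)$ and satisfies $|\nabla^\perp f|(\R^2)=|Df|(\R^2)=V(f)=\|f\|_{\FV}$. Its divergence vanishes in $\mathcal D'(\R^2)$ by commutativity of distributional partials: for every $\varphi\in C_c^\infty(\R^2)$,
\[
\langle\dive\nabla^\perp f,\varphi\rangle=-\int(-\partial_y f)\,\partial_x\varphi-\int(\partial_x f)\,\partial_y\varphi=\langle f,\partial_x\partial_y\varphi-\partial_y\partial_x\varphi\rangle=0,
\]
so $\nabla^\perp f\in\mathcal J(\R^2)$, and the displayed total variation identity shows $\nabla^\perp$ is an isometry. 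Injectivity then follows either from the isometry or directly from Theorem \ref{thm:347AFP}: $\nabla^\perp f=0$ gives $V(f)=0$, hence $\|f-m\|_{L^{1^*}}=0$ for some $m\in\R$, and since $f\in L^{1^*}(\R^2)$ we must have $m=0$, so $f=0$.

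The main work is surjectivity, which is the weak Poincar\'e lemma in $\R^2$ alluded to in the introduction. Fix $\bm\mu=(\mu_1,\mu_2)\in\mathcal J(\R^2)$ and set $M:=|\bm\mu|(\R^2)$. Take a standard mollifier $\rho_\varepsilon$ and form $\bm\mu^\varepsilon:=\bm\mu*\rho_\varepsilon\in C^\infty(\R^2;\R^2)$; this is still divergence-free and $|\bm\mu^\varepsilon|(\R^2)\le M$. By the classical Poincar\'e lemma applied to the smooth closed $1$-form $-\mu_2^\varepsilon\,dx+\mu_1^\varepsilon\,dy$ on the simply connected domain $\R^2$, there exists $f^\varepsilon\in C^\infty(\R^2)$, unique up to an additive constant, such that $\nabla^\perp f^\varepsilon=\bm\mu^\varepsilon$; in particular $V(f^\varepsilon)=|\bm\mu^\varepsilon|(\R^2)\le M$. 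By Theorem \ref{thm:347AFP} there is a constant $m^\varepsilon\in\R$ with $\|f^\varepsilon-m^\varepsilon\|_{L^{1^*}(\R^2)}\le\gamma V(f^\varepsilon)\le\gamma M$, and replacing $f^\varepsilon$ by $f^\varepsilon-m^\varepsilon$ (which does not alter $\nabla^\perp f^\varepsilon$) we may assume $\|f^\varepsilon\|_{L^{1^*}}\le\gamma M$ and $V(f^\varepsilon)\le M$ uniformly in $\varepsilon$.

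To conclude I extract a limit. The family $\{f^\varepsilon\}$ is uniformly bounded in $L^{1^*}(\R^2)$ and, on each ball $B_R$, uniformly bounded in $\BV(B_R)$, so by the compact embedding $\BV(B_R)\hookrightarrow L^1(B_R)$ and a diagonal argument I can pick $\varepsilon_n\downarrow 0$ with $f^{\varepsilon_n}\to f$ in $L^1_{\loc}(\R^2)$, for some $f\in L^{1^*}(\R^2)$ (which is also the weak $L^{1^*}$ limit). Passing to the limit in the identity
\[
\int f^{\varepsilon_n}\,\partial_y\varphi\,dx=-\int\varphi\,d\mu_1^{\varepsilon_n},\qquad \int f^{\varepsilon_n}\,\partial_x\varphi\,dx=\int\varphi\,d\mu_2^{\varepsilon_n},
\]
for $\varphi\in C_c^\infty(\R^2)$, and using $\bm\mu^{\varepsilon_n}\rightharpoonup^*\bm\mu$ in $\mathscr M(\R^2;\R^2)$, yields $\nabla^\perp f=\bm\mu$ in $\mathcal D'$. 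Lower semicontinuity of the variation in $L^1_{\loc}$ (Remark 3.5 of \cite{AFP}) gives $V(f)\le\liminf_n V(f^{\varepsilon_n})\le M<\infty$, hence $f\in\FV(\R^2)$ and $\nabla^\perp f=\bm\mu$. The hardest point is precisely the existence of a uniform normalization making $\{f^\varepsilon\}$ relatively compact while preserving the equation; Theorem \ref{thm:347AFP} is exactly what supplies it.
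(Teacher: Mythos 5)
Your proposal is correct and follows essentially the same approach as the paper: both prove surjectivity by mollifying $\bm\mu$, applying the classical Poincar\'e lemma to the mollified fields, normalizing via Theorem~\ref{thm:347AFP}, extracting an $L^1_{\loc}$-convergent subsequence by $\BV$ compactness, and passing to the limit with lower semicontinuity of the variation. Your argument for the isometry (that $\nabla^\perp f$ is a pointwise rotation of $Df$ and hence has the same total variation) is a slightly cleaner route to the same conclusion the paper obtains via a supremum computation over test vector fields, but this is a presentational variation rather than a genuinely different proof.
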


\begin{proof}
	\emph{Well-posedness and linearity.} The map $\nabla^\perp$ is well-posed, because $\dive \nabla^\perp f = 0$ for any $f \in \FV(\R^2)$: indeed, for any test function $\phi \in C_c^\infty(\R^2)$ 
	\begin{equation*}
	\begin{split}
	\langle \dive \nabla^\perp f , \phi \rangle & = \int_{\R^2} \nabla \phi(z) \cdot d(\nabla^\perp f)(z) \\ 
	& = \int_{\R^2} (\partial_x \phi(z), \partial_y \phi(z)) \cdot d((- \partial_y f,\partial_x f ))(z) \\
	& = \int_{\R^2} \partial_y \partial_x \phi(z) f(z) \, dz - \int_{\R^2} \partial_x \partial_y \phi(z) f(z)  \, dz = 0.
	\end{split}
	\end{equation*} 
	Linearity of $\nabla^\perp$ is trivial. 
	
	\emph{Injectivity.} The kernel of $\nabla^\perp$ is given by the functions $f$ for which 
	\begin{equation*}
	\nabla^\perp f = 0
	\end{equation*}
	which means $f$ is constant in $\R^2$, in particular $f = 0$ in $\FV(\R^2)$: injectivity follows. 
	
	\emph{Surjectivity.} 
	Let us turn to prove surjectivity: pick $\mu \in \mathcal J(\R^2)$ and let $\{\rho_\e\}_{\e>0}$ be a standard family of mollifiers in $\R^2$. Set 
	\begin{equation*}
	\Phi_\e(x) := \bm \mu \ast \rho_\e(x) = \int_{\R^2} \rho_\e(x-y) d\bm \mu(y).
	\end{equation*}
	and observe that by standard facts $\Phi_\e \in C_c^\infty(\R^2;\R^2)$ with $\dive \Phi_\e=0$. By Poincar\'e Lemma, for every $\e>0$, there exists $f_\e \in C_c^\infty(\R^2)$ such that $\nabla^\perp f_\e = \Phi_\e$. Notice that for any $\e>0$
	\begin{equation*}
	V(f_\e) = \| \Phi_\e\|_1 \le \Vert \bm \mu \Vert_{\mathcal J}
	\end{equation*}
	hence $(f_\e)_{\e>0} \subset \FV(\R^2)$. By Theorem \ref{thm:347AFP} there exists $\{m_\e\}_{\e>0}\subset \R$ and a universal constant $\gamma>0$ such that 
	\begin{equation*}
	\Vert f_\e - m_\e \Vert_{L^{1^*}(\R^2)} \le \gamma V(f_\e) \le \gamma  \Vert \bm \mu \Vert_{\mathcal J}.
	\end{equation*}
	In particular, if we now fix any open $\Omega \Subset \R^2$, we have using H\"older inequality 
	\begin{equation*}
	\| f_\e-m_\e\|_{L^1(\Omega)} \le \L^d(\Omega)^{1/d} \| f_\e-m_\e\|_{L^{1^*}(\Omega)} \le \gamma\L^d(\Omega)^{1/d}  \Vert \bm \mu \Vert_{\mathcal J}.
	\end{equation*}
	On the other hand 
	\begin{equation*}
	V(f_\e-m_\e, \Omega) \le V(f_\e-m_\e) = V(f_\e)\le \Vert \bm \mu \Vert_{\mathcal J}
	\end{equation*}
	and hence we are in position to apply the Compactness Theorem \cite[Theorem 3.23]{AFP}: there exists a function $f \in L^1_\loc(\R^2)$ such that, up to a subsequence, $(f_\e-m_\e )\to f$ strongly in $L_{\loc}^1(\R^2)$ as $\e\to 0$. In particular, $f$ is also in $\FV(\R^2)$ by the l.s.c. of the total variation 
	\begin{equation*}
	V(f)\le \liminf_{\e \downarrow 0} V(f_\e) \le c  \Vert \bm \mu \Vert_{\mathcal J}.
	\end{equation*}
	It remains now to check that $\nabla^\perp f = \bm \mu$: indeed, for any smooth, compactly supported test function $\Psi \in C_c^\infty(\R^2, \R^2)$ it holds 
	\begin{equation*}
	\begin{split}
	\int_{\R^2} f(x) \dive  \Psi(x) \, dx & = \lim_{\e\to 0} \int_{\R^2} f_\e(x) \dive  \Psi(x) \, dx \\
	&  = \lim_{\e\to 0} \int_{\R^2}  \Psi(x) \cdot \Phi^\perp_\e(x)\,dx \\
	&  = \int_{\R^2}  \Psi(x) \, d\bm \mu^\perp(x) \\
	\end{split}
	\end{equation*}
	where in the last passage we have used that $\Phi_\e \rightharpoonup \bm \mu$ as $\e \to 0$ (see e.g. \cite[Thm. 2.2]{AFP}).
	
	\emph{$\nabla^\perp$ is an isometry.} It remains thus to show that $\nabla^\perp$ is an isometry: taken $f \in \FV(\R^2)$ by definition 
	\begin{equation*}
	\begin{split}
	\Vert f \Vert_{\FV} & = V(f,\R^2) = \sup \left\{ \int_{\R^2} f(x) \dive \Phi(x)\, dx: \, \Phi \in C_c^\infty(\R^2), \Vert \Phi \Vert_{\infty} \le 1 \right\}\\
	& = \sup \left\{ \langle \Phi, \nabla f \rangle: \,  \Phi \in C_c^\infty(\R^2), \Vert \Phi \Vert_{\infty} \le 1 \right\}\\
	& = \sup \left\{ \langle \Phi, \nabla^\perp f \rangle: \,  \Phi \in C_c^\infty(\R^2), \Vert \Phi \Vert_{\infty} \le 1 \right\}\\
	& = \sup \left\{ \int_{\R^2} \dive \Phi(x)\, d(\nabla^\perp f)(x):  \,  \Phi \in C_c^\infty(\R^2), \Vert \Phi \Vert_{\infty} \le 1 \right\}\\
	& = \Vert \nabla^\perp f \Vert_{\mathcal J}. 
	\end{split}
	\end{equation*}
\end{proof}

\section{Simple sets and closed curves}

Aim of this section is to give a detailed description of the extreme points of the unit ball of $\mathcal J$. Since $\nabla^\perp$ is an isometry we have 
\begin{equation*}
B_1^{\mathcal J} = \nabla^\perp (B_1^{Y})
\end{equation*}
and hence, by Remark \ref{rmk:extreme_isomor}, we have 
\begin{equation}\label{eq:extreme_points_sets}
\ext(B_1^{\mathcal J}) = \ext(\nabla^\perp (B_1^{Y})) = \nabla^\perp (\ext(B_1^{Y}) )= \left\{\sigma \frac{\nabla^\perp \1_E}{P(E)} : E \subset \R^2 \text{ simple set,  } P(E)>0 \text{ and } \sigma \in \{\pm 1\}\right\}.
\end{equation}
Let us introduce the following notation: 
\begin{equation*}
	\Gamma := \left\{ \gamma \colon [0,1] \to \R^2:  \, \text{Lipschitz on } [0,1], \text{ injective on } [0,1) \text{ and }\gamma(0)=\gamma(1)  \right\}. 
\end{equation*}
For any $\gamma \in \Gamma$, we define its length to be $\ell(\gamma):=\int_0^1 \vert \gamma'(t)\vert \, dt \in(0,+\infty)$. Notice, in particular, that any $\gamma \in \Gamma$ induces a rectifiable Jordan curve $C := \gamma([0,1])$, and viceversa every rectifiable Jordan curve can be parametrized by some $\gamma \in \Gamma$. Being a subset of $(\Lip[0,1])^2$ the space $\Gamma$ can be thought as a normed space, being the norm the (restriction of the) uniform one $\Vert  \cdot \Vert_{\infty}$.

We now want to prove the following proposition.

\begin{proposition}\label{prop:extreme_points_of_H}
	The following equality holds true: 
	\begin{equation*}
	\ext(B_1^{\mathcal J}) =\left\{ \frac{1}{\ell(\gamma)} \bm \mu_\gamma: \, \gamma\in \Gamma \right\} . 
	\end{equation*}
\end{proposition}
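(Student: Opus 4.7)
The key observation is that the statement follows from combining the already-established identity \eqref{eq:extreme_points_sets} with the correspondence between simple sets in $\R^2$ and rectifiable Jordan curves provided by Theorem \ref{thm:simple-boundary}. The bulk of the work is to verify that, for a simple set $E$ with parametrization $\gamma$ of $\partial^e E$, the measures $\nabla^\perp \1_E$ and $\bm \mu_\gamma$ coincide up to a sign (coming from the orientation of $\gamma$) and that the normalizing constants $P(E)$ and $\ell(\gamma)$ agree.

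\textbf{Step 1 (simple set $\Rightarrow$ curve).} Start from an arbitrary element $\sigma \nabla^\perp \1_E / P(E) \in \ext(B_1^{\mathcal J})$, where $E \subset \R^2$ is simple with $P(E) > 0$ and $\sigma \in \{\pm 1\}$. By Theorem \ref{thm:simple-boundary}, $\partial^e E$ coincides $\H^1$-a.e. with a rectifiable Jordan curve $C$, and we may pick a Lipschitz parametrization $\gamma_0 \in \Gamma$ of $C$ (using the injectivity on $[0,1)$ and $\gamma_0(0) = \gamma_0(1)$ built into $\Gamma$). By De Giorgi's Theorem \ref{thm:de_giorgi} and Federer's Theorem \ref{thm:de_giorgi_federer},
\begin{equation*}
\nabla^\perp \1_E = R \nu_E \cdot \H^1 \rest_C,
\end{equation*}
where $R$ denotes counterclockwise rotation by $\pi/2$, i.e.\ $R(a,b) = (-b,a)$.

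\textbf{Step 2 (matching tangents).} By the area formula applied to the injective Lipschitz map $\gamma_0$,
\begin{equation*}
\bm \mu_{\gamma_0} = \tau_{\gamma_0} \, \H^1 \rest_C,
\end{equation*}
where $\tau_{\gamma_0}(x)$ is the unit tangent coming from $\gamma_0'$. By Proposition \ref{prop:tang_smooth} and De Giorgi's theorem the approximate tangent line at $\H^1$-a.e.\ $x \in C$ equals both $\mathrm{span}(\tau_{\gamma_0}(x))$ and $\nu_E(x)^\perp = \mathrm{span}(R\nu_E(x))$. Hence $\tau_{\gamma_0}(x) = \varepsilon(x) R \nu_E(x)$ with $\varepsilon(x) \in \{\pm 1\}$ for $\H^1$-a.e.\ $x \in C$. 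A standard orientation argument (local consistency of $\tau_{\gamma_0}$ along the parametrization and of $\nu_E$ along the connected Jordan curve) shows that $\varepsilon$ is $\H^1$-a.e.\ constant on $C$; denote this constant by $\varepsilon_0 \in \{\pm 1\}$.

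\textbf{Step 3 (normalization and sign choice).} Let $\gamma$ be $\gamma_0$ itself if $\sigma \varepsilon_0 = 1$, otherwise let $\gamma(t) := \gamma_0(1-t)$ (which is still in $\Gamma$ and reverses the sign of $\tau$). Then $\bm \mu_\gamma = \sigma \nabla^\perp \1_E$. Since $\gamma$ is Lipschitz and injective on $[0,1)$, the area formula gives
\begin{equation*}
\ell(\gamma) = \int_0^1 |\gamma'(t)| \, dt = \H^1(\gamma([0,1])) = \H^1(C) = P(E),
\end{equation*}
the last equality by De Giorgi's Theorem. Hence
\begin{equation*}
\sigma \frac{\nabla^\perp \1_E}{P(E)} = \frac{\bm \mu_\gamma}{\ell(\gamma)},
\end{equation*}
which proves the inclusion $\subseteq$.

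\textbf{Step 4 (converse).} Conversely, pick $\gamma \in \Gamma$. Its image $C = \gamma([0,1])$ is a rectifiable Jordan curve with $\H^1(C) = \ell(\gamma) > 0$, and by the second part of Theorem \ref{thm:simple-boundary}, $E := \mathop{\mathsf{int}}(C)$ is a simple set with $P(E) = \H^1(\partial^e E) = \H^1(C) = \ell(\gamma) > 0$. Applying Steps 1--3 to this $E$ yields some $\sigma \in \{\pm 1\}$ with $\bm \mu_\gamma/\ell(\gamma) = \sigma \nabla^\perp \1_E / P(E)$, which lies in $\ext(B_1^{\mathcal J})$ by \eqref{eq:extreme_points_sets}.

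\textbf{Main obstacle.} The delicate point is Step 2, where one has to identify the unit tangent coming from a Lipschitz parametrization with the rotated De Giorgi normal, including the global sign. Once this is settled, the rest is an essentially bookkeeping matter: area formula for the length, De Giorgi for the perimeter, and a correct choice between $\gamma_0$ and its reverse to absorb the sign $\sigma$.
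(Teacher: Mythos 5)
Your proposal follows the same overall structure as the paper's proof and gets the bookkeeping (area formula, $P(E)=\H^1(\mathcal F E)=\ell(\gamma)$, choosing $\gamma$ or its reversal to absorb the sign $\sigma$) exactly right. However, Step 2 contains a genuine gap at precisely the point the paper considers most delicate. You claim that the a.e.\ sign $\varepsilon(x)\in\{\pm1\}$ relating $\tau_{\gamma_0}$ and $R\nu_E$ is a.e.\ constant by ``a standard orientation argument (local consistency \dots along the connected Jordan curve).'' This argument is not available here: $\gamma_0$ is only Lipschitz and $E$ only has finite perimeter, so both $\gamma_0'/|\gamma_0'|$ and $\nu_E$ are merely $\H^1$-a.e.\ defined, measurable unit vector fields on $C$, with no continuity whatsoever. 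A locally constant measurable $\pm1$-valued function on an $\H^1$-rectifiable Jordan curve has no reason to be globally constant — the usual topological argument needs continuity of $\varepsilon$ on the connected set $C$, which you do not have. There is no soft ``local consistency'' to appeal to.

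The paper closes this gap by invoking structure you have not used: it observes that $\nu_E^\perp\,\H^1\rest_C = \nabla^\perp\1_E$ is \emph{divergence-free}, and applies \cite[Theorem 4.9]{BG}, which says that a divergence-free $\H^1$-rectifiable vector measure whose direction is a.e.\ tangent to a simple Lipschitz curve must have a.e.\ constant orientation along that curve. So the divergence-free hypothesis does real work here; your proof never uses it in Step 2, which is a signal that something is missing. To repair the proposal you should either cite \cite[Theorem 4.9]{BG} (or an equivalent orientability result for one-rectifiable divergence-free measures), or give an independent argument that actually uses $\dive(\nu_E^\perp\,\H^1\rest_C)=0$ — for instance by testing against gradients of smooth cutoffs supported near a putative orientation-reversal and deriving a contradiction.
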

 
\begin{proof} Let $\mu \in 	\ext(B_1^{\mathcal J})$. From \eqref{eq:extreme_points_sets} we have that $\mu = \sigma \frac{1}{P(E)} \nabla^\perp \1_E$ for some simple set $E \subset \R^2$ with $P(E)>0$ and $\sigma \in {\pm 1}$. From Theorem \ref{thm:simple-boundary}, the essential boundary $\partial^e E$, is equivalent, up to an $\H^1$-negligible set, to a rectifiable Jordan curve. Using Theorem \ref{thm:de_giorgi_federer}, we can conclude that also $\mathcal F E$ can be parametrized by some Jordan curve, which can be taken to be Lipschitz (see \cite[Lemma 3]{ACMM}). All in all, we have that there exists $\gamma \in \Gamma$ such that $\gamma([0,1]) = \mathcal F E$, up to a $\H^1$-null set. 

On the one hand, by De Giorgi's Theorem \ref{thm:de_giorgi}, for $\mathscr H^1$-a.e. $x \in \mathcal F E$ we have 
\begin{equation}\label{eq:tan_de_giorgi}
\Tan(\mathcal F E, x) = \mathsf{span} (\nu^\perp_E(x)) 
\end{equation}
where $\nu_E(x)$ is the generalized inner normal to $E$ and $\mathsf{span} (\nu^\perp_E(x))$ denotes the orthogonal line to $\nu_E(x)$.
 
On the other hand, since $\mathcal F E = \gamma([0,1])$ we have using Proposition \ref{prop:tang_smooth} 
\begin{equation}\label{eq:tan_non_de_giorgi}
\Tan(\gamma([0,1]), x) = \mathsf{span}(\gamma'(\gamma^{-1}(x))). 
\end{equation}
Since the approximate tangent space is a one dimensional vector space and since $\nu_E(x)$ is unit vector for $\H^1$-almost every $x \in \mathcal FE$,   equalities \eqref{eq:tan_de_giorgi} and \eqref{eq:tan_non_de_giorgi} force that for $\H^1$-a.e. $x \in \mathcal FE$
\begin{equation}\label{eq:equality_x}
\nu^\perp_E(x) = \sigma(x) \frac{\gamma'(\gamma^{-1}(x))} {|\gamma'(\gamma^{-1}(x)), |} \qquad \text{ for }\sigma(x) \in \{\pm 1\}. 
\end{equation}
This means that the vector $\nu^\perp_E(x)$ is tangent to the curve $\gamma$ at the point $\gamma(\gamma^{-1}(x))$ for $\H^1$-a.e. $x \in \gamma([0,1])$. Since $\dive(\nu^\perp_E \H^1 \rest_{\gamma([0,1])})=0$ we can apply
\cite[Theorem 4.9]{BG}, obtaining that
\begin{equation*}
\exists \bar\sigma \in \{\pm 1\}: \qquad  \nu_E^\perp(\gamma(t)) = \bar \sigma \cdot \frac{\gamma'(t)}{|\gamma'(t)|} \qquad \text{ for } \L^1\text{-a.e. } t \in [0,1].
\end{equation*}
 Reversing the parametrization of $\gamma$, if necessary, one can achieve that $\bar \sigma = 1$. Then for any test function $\Phi \in C_c^\infty(\R^2;\R^2)$, using Area Formula, we obtain
\begin{equation}\label{eq:long_perim_curves}
\begin{split}
\langle \mu, \Phi \rangle & = \left \langle \frac{1}{P(E)} \nabla^\perp \1_E , \Phi \right \rangle \\
& = \left \langle \frac{1}{P(E)}\nu_E^\perp \H^1\rest_{\mathcal FE}, \Phi \right \rangle \\
& = \frac{1}{P(E)} \int_{\R^2}  \Phi(x) \cdot  \nu_E^\perp(x) \, d\H^1 \rest_{\mathcal FE}(x) \\ 
& = \frac{1}{P(E)}  \int_{\R^2} \Phi(x) \cdot \frac{ \gamma'(\gamma^{-1}(x))}{|\gamma'(\gamma^{-1}(x))|} \, d\H^1 \rest_{\gamma([0,1])}(x) \\ 
& =  \frac{1}{P(E)} \int_0^1 \Phi(\gamma(t)) \cdot  \frac{\gamma'(t)}{|\gamma'(t)|} |\gamma'(t)| \, dt\\
& =  \frac{1}{\ell(\gamma)} \int_0^1 \Phi(\gamma(t)) \cdot  \gamma'(t)\, dt \\ 
& =\left \langle  \frac{1}{\ell(\gamma)}\bm \mu_{\gamma}, \Phi \right \rangle 
\end{split}
\end{equation}
where we have also used the fact that $P(E) = V(\1_E) =\|\nu_E \H^1\rest_{\mathcal FE}\|_{\mathscr M} =\H^1(\gamma([0,1])) = \ell(\gamma)$ (which also follows from Area Formula). 

Thus we have shown that any extreme point $\mu$ of $B_1^{\mathcal J}$ has necessarily the form $\frac{1}{\ell(\gamma)}\bm \mu_{\gamma}$. The converse implication, namely that normalized measures $\mu_{\gamma}$ are extreme, follows immediately from the second part of Theorem \ref{thm:simple-boundary}: any $\gamma \in \Gamma$ induces a rectifiable Jordan curve $C := \gamma([0,1])$, hence $\mathsf{int}(\Gamma) =:E$ is a simple set by Theorem \ref{thm:simple-boundary}. Extremality the follows from \eqref{eq:extreme_points_sets}, noticing that 
\begin{equation*}
\frac{1}{\ell(\gamma)}\bm \mu_{\gamma} = \frac{1}{P(E)} \nabla^\perp \1_E
\end{equation*}
as above, and the proof is thus complete. 
\end{proof}

\section{Measures as superposition of curves I: a proof using Choquet's Theory}

In this section we prove \hyperref[t-main]{Main Theorem} with $\rho=0$:

\begin{theorem}\label{thm:main}
	Let $\bm \mu \in \mathcal J(\R^2)$, where $\mathcal J(\R^2)$ is as in Definition \ref{def:solenoids}. Then there exists a $\sigma$-finite, non-negative measure $\eta \in \mathscr M_+(\Gamma)$ such that \eqref{e-mu-decomp} and \eqref{e-|mu|-decomp} hold.
\end{theorem}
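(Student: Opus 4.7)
The strategy is to combine Propositions \ref{prop:isomorphism}, \ref{prop:extreme_points_of_H} and Choquet's Theorem (Theorem \ref{thm:Choquet}). Since the statement is trivial if $\bm \mu = 0$, I would set $\bar{\bm \mu} := \bm \mu / \|\bm \mu\|_{\mathcal J}$ so that $\bar{\bm \mu} \in B_1^{\mathcal J}$ and verify that $B_1^{\mathcal J}$ is a nonempty, convex, weak-$*$ compact and metrizable subset of $\mathscr M(\R^2;\R^2)$. Compactness follows from Banach-Alaoglu together with the observation that $\mathcal J(\R^2)$ is weak-$*$ closed in $\mathscr M(\R^2;\R^2)$, being the intersection of the kernels of the weak-$*$ continuous functionals $\bm \nu \mapsto -\int \nabla \phi \cdot d\bm \nu$ as $\phi$ ranges over $C_c^\infty(\R^2)$; metrizability on the ball is a consequence of the separability of $C_0(\R^2;\R^2)$.

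Having checked these hypotheses, Theorem \ref{thm:Choquet} yields a Borel probability measure $\mathfrak m$ concentrated on $\ext(B_1^{\mathcal J})$ with
\[
\bar{\bm \mu} = \int_{\ext(B_1^{\mathcal J})} \bm \nu \, d\mathfrak m(\bm \nu),
\]
interpreted in the sense of Definition \ref{def:vector_integration}. By Proposition \ref{prop:extreme_points_of_H} every extreme point is of the form $\Psi(\gamma) := \ell(\gamma)^{-1} \bm \mu_\gamma$ for some $\gamma \in \Gamma$. The next step is to transport $\mathfrak m$ from $\ext(B_1^{\mathcal J})$ to $\Gamma$ by means of a Borel section $s : \ext(B_1^{\mathcal J}) \to \Gamma$ of $\Psi$; concretely, each extreme point corresponds (via $(\nabla^\perp)^{-1}$ and Proposition \ref{prop:extreme_points_of_FV}) to a unique simple set $E$ up to sign, and Theorem \ref{thm:simple-boundary} identifies $\partial^e E$ with a rectifiable Jordan curve that can be canonically reparametrized at constant speed, with starting point chosen in a Borel-measurable way (for instance, the lexicographically smallest point of $\partial^e E$) and with a fixed orientation convention (say, with $E$ on the left).

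Given such a section, set $\tilde \eta := s_\# \mathfrak m$, which is a Borel probability measure on $\Gamma$, and define the non-negative Borel measure
\[
d\eta(\gamma) := \frac{\|\bm \mu\|_{\mathcal J}}{\ell(\gamma)} \, d\tilde \eta(\gamma),
\]
which is well-defined and $\sigma$-finite since $\ell(\gamma) \in (0,+\infty)$ for every $\gamma \in \Gamma$ (the level sets $\{\ell \ge 1/n\}$ have finite $\eta$-mass by Chebyshev). A change of variables together with the Choquet representation gives
\[
\bm \mu = \|\bm \mu\|_{\mathcal J} \int_\Gamma \frac{1}{\ell(\gamma)} \bm \mu_\gamma \, d\tilde \eta(\gamma) = \int_\Gamma \bm \mu_\gamma \, d\eta(\gamma),
\]
which is \eqref{e-mu-decomp}. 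For \eqref{e-|mu|-decomp} the triangle inequality for vector-valued integrals yields $|\bm \mu| \le \int_\Gamma |\bm \mu_\gamma| \, d\eta(\gamma)$ as measures on $\R^2$; on the other hand, using $\|\bm \mu_\gamma\|_{\mathcal J} = \ell(\gamma)$ one computes $\int_\Gamma \|\bm \mu_\gamma\|_{\mathcal J} \, d\eta(\gamma) = \|\bm \mu\|_{\mathcal J} = |\bm \mu|(\R^2)$, so the two non-negative measures have equal total mass and are therefore equal.

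The main obstacle I expect to have to address carefully is the construction of the Borel section $s$: the map $\Psi$ collapses an infinite-dimensional set of reparametrizations of the same Jordan curve to a single extreme point, so one must verify that the chosen canonical parametrization depends Borel-measurably on the extreme point in the weak-$*$ topology (equivalently, on the underlying simple set). An alternative, less constructive, route would be a direct appeal to a Kuratowski-Ryll-Nardzewski-type measurable selection theorem applied to the multivalued map $\bm \nu \mapsto \Psi^{-1}(\bm \nu) \subset \Gamma$; everything else in the argument is essentially a change of variables plus Choquet.
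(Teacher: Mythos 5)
Your proposal follows essentially the same route as the paper: normalize, verify that $B_1^{\mathcal J}$ is a compact metrizable convex set, apply Choquet's Theorem, use Proposition~\ref{prop:extreme_points_of_H} to identify the extreme points, and then transport the Choquet measure to $\Gamma$ by a measurable selection and a density correction by $1/\ell(\gamma)$. The single point where you leave a gap—and you flag it honestly—is precisely the one the paper devotes a separate lemma to: the paper's Lemma~\ref{l-reparam}, supported by Lemma~\ref{lemma:borel}, produces the selection by exhausting $\Gamma$ with the compact slices $\Gamma_m$, on each of which the continuous map $\mathfrak p$ admits a Borel right inverse $\mathfrak q_m$ by a standard Borel-selection theorem for continuous maps on compacta (e.g.\ \cite[Thm.\ 6.9.7]{bogachev}); the inversion of the normalization $F$ is handled there as well. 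Your two suggested alternatives (canonical lexicographic/constant-speed parametrization, or Kuratowski--Ryll-Nardzewski) are reasonable substitutes, but the constant-speed construction would require you to verify Borel dependence of the chosen starting point and orientation on the weak-$*$ limit, and the KRN route would require you to check that the multimap $\bm\nu \mapsto \Psi^{-1}(\bm\nu)$ is measurable with closed values in the (non-locally-compact) space $\Lip([0,1];\R^2)$; the paper's compact-exhaustion trick avoids both of these issues. Everything else (the change of variables with density $\|\bm\mu\|/\ell(\gamma)$, the $\sigma$-finiteness via Chebyshev on $\{\ell \ge 1/n\}$, and the upgrade of the triangle inequality to an equality of measures by comparing total masses) matches the paper.
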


Consider the maps $\mathfrak p  \colon \Gamma \to \mathcal J(\R^d)$ and $F\colon \mathfrak p(\Gamma) \to \mathcal J(\R^d)$ defined by
\begin{equation}\label{e-maps-p-and-F}
\mathfrak p(\gamma) := \bm \mu_{\gamma}, \qquad F(\bm \nu) := 
\begin{cases}
\frac{\bm \nu}{\|\bm \nu\|}, & \bm \nu \ne 0,\\
0, & \bm \nu = 0
\end{cases}
\end{equation}
For any $m\in \N$ let
\begin{equation*}
\Gamma_m := \left\{\gamma \in \Gamma: |\gamma(0)| + \|\gamma'\|_{\infty} \le m \right\}.
\end{equation*}
In view of Arzel\`a-Ascoli Theorem $\Gamma_m$ is a compact subset of $\Gamma$ (with respect to the topology of the uniform convergence).

The lemma below works in any dimension $d$ (not only $d=2$). 
\begin{lemma}\label{lemma:borel}
	The maps $\mathfrak p$ and $F$ defined in \eqref{e-maps-p-and-F} have the following properties:
	\begin{enumerate}
	\item For any $m\in \N$ the map $\mathfrak p  \colon \Gamma_m \to \mathcal J(\R^d)$ defined in \eqref{e-maps-p-and-F}
	is continuous (with respect to uniform topology on $\Gamma_m$ and weak-star topology on $\mathcal J(\R^d)$).
	\item The map $F\colon \mathcal J(\R^d) \to \mathcal J(\R^d)$ is Borel.
	\item The sets $\mathfrak p(\Gamma)$ and $F(\mathfrak p(\Gamma))$ are Borel.
	\item The $F\colon \mathfrak p(\Gamma) \to F(\mathfrak p(\Gamma))$ has Borel inverse $F^{-1}$.
	\end{enumerate}
\end{lemma}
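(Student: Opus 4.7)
The four claims are best handled in order of increasing subtlety. For part~(1), assume $(\gamma_n) \subset \Gamma_m$ converges uniformly to $\gamma \in \Gamma_m$. The bound $|\gamma_n(0)|+\|\gamma_n'\|_\infty \le m$ keeps every curve inside $\overline{B(0,m)}$, so for any $\Phi \in C_0(\R^d;\R^d)$ the composition $\Phi\circ\gamma_n$ converges uniformly to $\Phi\circ\gamma$ by uniform continuity of $\Phi$ on this ball. Moreover $\|\gamma_n'\|_\infty \le m$, and a standard argument (testing any subsequential weak-$*$ limit of $\gamma_n'$ against $C_c^\infty((0,1))$ and integrating by parts) shows $\gamma_n' \rightharpoonup^* \gamma'$ in $L^\infty([0,1];\R^d)$. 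Splitting
\[
\int_0^1 \Phi(\gamma_n)\cdot\gamma_n'\,dt = \int_0^1 \bigl(\Phi(\gamma_n)-\Phi(\gamma)\bigr)\cdot\gamma_n'\,dt + \int_0^1 \Phi(\gamma)\cdot\gamma_n'\,dt,
\]
the first term is bounded by $m\|\Phi\circ\gamma_n-\Phi\circ\gamma\|_\infty$ and vanishes, while the second converges to $\int_0^1 \Phi(\gamma)\cdot\gamma'\,dt$ by weak-$*$ convergence paired with the $L^1$ function $\Phi\circ\gamma$. This gives $\langle\bm\mu_{\gamma_n},\Phi\rangle \to \langle\bm\mu_\gamma,\Phi\rangle$.

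For part~(2), the total variation norm $\bm\nu\mapsto\|\bm\nu\|$ is weak-$*$ lower semicontinuous, hence Borel; scalar multiplication $(a,\bm\nu)\mapsto a\bm\nu$ is jointly continuous. Thus on $\mathcal{J}(\R^d)\setminus\{0\}$ the map $F(\bm\nu)=\|\bm\nu\|^{-1}\bm\nu$ is a composition of a Borel and a continuous map, and extending by $F(0):=0$ on the closed singleton $\{0\}$ preserves Borel measurability on all of $\mathcal{J}(\R^d)$.

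For parts~(3) and~(4) I would invoke the Lusin--Souslin theorem together with an injectivity step. Arzel\`a--Ascoli gives that each $\Gamma_m$ is compact in $(\Lip([0,1];\R^d),\|\cdot\|_\infty)$, and part~(1) then gives that $\mathfrak p(\Gamma_m)$ is a weak-$*$ compact subset of the norm-$m$ ball in $\mathcal{J}(\R^d)$ (which is itself weak-$*$ compact and metrizable by separability of $C_0$). So $\mathfrak p(\Gamma_m)$ is Borel, and $\mathfrak p(\Gamma)=\bigcup_m \mathfrak p(\Gamma_m)$ is $F_\sigma$. Next, I would show that $F$ is injective on $\mathfrak p(\Gamma)\setminus\{0\}$: if $\bm\mu_{\gamma_1}/\|\bm\mu_{\gamma_1}\|=\bm\mu_{\gamma_2}/\|\bm\mu_{\gamma_2}\|$ then $\bm\mu_{\gamma_1}=\lambda\bm\mu_{\gamma_2}$ for some $\lambda>0$; comparing the polar decompositions of these $\mathscr{H}^1$-rectifiable measures (each with unit tangent density along a rectifiable Jordan curve) forces $\lambda=1$, so $\bm\mu_{\gamma_1}=\bm\mu_{\gamma_2}$. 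Each restriction $F|_{\mathfrak p(\Gamma_m)}$ is therefore an injective Borel map on a compact standard Borel space, and the Lusin--Souslin theorem yields simultaneously that $F(\mathfrak p(\Gamma_m))$ is Borel in $\mathcal{J}(\R^d)$ and that its inverse $(F|_{\mathfrak p(\Gamma_m)})^{-1}$ is Borel. Taking countable unions over $m$ establishes both (3) and (4).

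\textbf{Main obstacle.} The critical subtlety is that $F$ is \emph{not} weak-$*$ continuous — the total variation norm can strictly drop under a weak-$*$ limit — so one cannot hope that $F(\mathfrak p(\Gamma_m))$ is a continuous image of a compact set. The argument is forced into the descriptive set-theoretic route: combine Borel measurability of $F$ with injectivity of $F$ on $\mathfrak p(\Gamma)$ and appeal to Lusin--Souslin (equivalently, Kuratowski's theorem on Borel bijections between standard Borel spaces). Verifying the injectivity is the only step where the specific simple-closed-curve structure of $\Gamma$ actually intervenes.
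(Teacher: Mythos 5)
Your proposal is correct and follows essentially the same route as the paper: direct verification of weak-$*$ continuity of $\mathfrak p$ on $\Gamma_m$, Borel measurability of $F$ from lower semicontinuity of the total variation norm, and then the Lusin--Souslin (injective Borel image) theorem on the compact pieces $\mathfrak p(\Gamma_m)$ to get (3) and (4), exactly as the paper does via its citation of Bogachev. The only differences are cosmetic: in (1) you pass $\gamma_n'\rightharpoonup^*\gamma'$ in $L^\infty$ and pair with $\Phi\circ\gamma$ where the paper integrates by parts onto $\nabla\Phi$ and uses density of $C_c^\infty$ in $C_0$, and you make explicit the injectivity of $F$ on $\mathfrak p(\Gamma)$ (via the unit-density structure of $\bm\mu_\gamma$ for simple closed curves), a point the paper asserts without proof.
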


\begin{proof}
It is sufficient to verify sequential continuity of $\mathfrak p$. Let $(\gamma_k)_{k \in \N} \subset \Gamma_m$ be a sequence with $\gamma_n \to \gamma$ for a certain $\gamma \in \Gamma_m$. Let us show that $\bm \mu_{\gamma_n} \stackrel{*}{\rightharpoonup} \bm \mu_{\gamma}$ first in the sense of distributions: let $\Phi \in C_c^\infty(\R^d;\R^d)$.
	Then
	\begin{equation*}
	\begin{split}
	\vert \langle \bm \mu_{\gamma_n}, \Phi \rangle -  \langle \bm \mu_{\gamma}, \Phi \rangle \vert & =\left \vert  \int_{0}^1 \Phi(\gamma_n(t)) \cdot \gamma'_n(t) dt - \int_{0}^1 \Phi(\gamma(t)) \cdot \gamma'(t) dt \right \vert \\
	& = \left \vert  \int_0^1 \bigl( \Phi(\gamma_n(t)) - \Phi(\gamma(t)) \bigr)  \cdot \gamma'_n(t) dt - \int_0^1 \Phi(\gamma(t)) \cdot \left( \gamma'(t) - \gamma'_n(t) \right) dt \right \vert \\
	& \le m\int_{0}^1 \vert \Phi(\gamma_n(t)) - \Phi(\gamma(t)) \vert \, dt + \left \vert \int_{0}^1 \frac{d}{dt}  \Phi(\gamma(t)) \cdot \left( \gamma(t) - \gamma_n(t) \right) \, dt \right\vert \\
	& \le m\int_{0}^1 \vert \Phi(\gamma_n(t)) - \Phi(\gamma(t)) \vert \, dt + \Vert \nabla \Phi \Vert_{\infty} \int_0^1  |\gamma(t) - \gamma_n(t) | \, dt \to 0 
	\end{split}
	\end{equation*}
	as $n \to +\infty$. Moreover $\sup_{n\in \N} \|\bm \mu_{\gamma_n}\| \le m$.
	Hence the functionals $\bm \mu_{\gamma_n} \in C_0(\R^d;\R^d)^*$ are uniformly bounded and converge to $\bm \mu$ pointwise on the set $C_c^\infty(\R^d;\R^d)$ which is dense in $C_0(\R^d;\R^d)$. Therefore $\bm \mu_{\gamma_n} \stackrel{*}{\rightharpoonup} \bm \mu$ as $n\to \infty$.

	Since for any $m\in \N$ the set $\mathfrak p(\Gamma_m)$ is compact (being an image of a compact under a contunuous map), the set $\mathfrak p(\Gamma) = \bigcup_{m\in \N} \mathfrak p(\Gamma)$ is Borel.

For any $\Phi\in C_0(\R^d;\R^d)$ the map $\bm \nu \mapsto \frac{\langle \bm \nu, \Phi \rangle}{\|\bm \nu\|}$ is Borel. Indeed, the numerator is a continuous function of $\bm \nu$ and the denominator is lower semicontinuous (hence Borel).
Therefore $F$ is a Borel map from $\mathscr M(\R^d;\R^d)$ to $\mathscr M(\R^d;\R^d)$ (with respect to weak-star topologies). 
Since for every $m\in \N$ the set $\mathfrak p(\Gamma_m)$ is contained in a closed ball of $\mathscr M(\R^d; \R^d)$ (note that this ball is Polish with respect to weak* topology) and $F$ is injective on $\mathfrak p(\Gamma_m)$, it follows that $F(\mathfrak p(\Gamma_m))$ is Borel
(see e.g. \cite{bogachev}, Theorem 6.8.6). Therefore $F(\mathfrak p(\Gamma)) = \bigcup_{m=1}^\infty F(\mathfrak p(\Gamma_m))$ is Borel. Similarly, the image of any Borel subset of $\mathfrak p(\Gamma)$ under $F$ is Borel, and by injectivity of $F$ on $\mathfrak p(\Gamma)$ this means that $F\colon \mathfrak p(\Gamma) \to F(\mathfrak p(\Gamma))$ has Borel inverse.
\end{proof}

\begin{lemma}\label{l-reparam}
Suppose that $\bm \mu \in \mathcal J(\R^d)$ and there exists a finite measure $\xi \in \mathscr M_+(\mathcal J(\R^d))$ concentrated on $F(\mathfrak p(\Gamma))$ such that
\begin{equation}\label{e-mu-decomposition-on-J}
\bm \mu = \int_{\mathcal J(\R^d)} \bm \nu \, d\xi(\bm \nu), \qquad
|\bm \mu| = \int_{\mathcal J(\R^d)} |\bm \nu| \, d\xi(\bm \nu)
\end{equation}
Then there exists $\sigma$-finite $\eta \in \mathscr M_+(\Gamma)$ such that 
\eqref{e-mu-decomp} and \eqref{e-|mu|-decomp} hold for $\bm \mu$ and $\eta$.
\end{lemma}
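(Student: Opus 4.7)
The plan is to transport the measure $\xi$, which lives on normalized measures in $\mathcal J(\R^d)$, to a measure $\eta$ on the parameter space $\Gamma$ in two steps: first use the Borel inverse $F^{-1}\colon F(\mathfrak p(\Gamma))\to \mathfrak p(\Gamma)$ furnished by Lemma~\ref{lemma:borel} to recover the correct scale, and then use a measurable selection to pass from curve-induced measures to parametrizations.

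Concretely, I would apply a measurable selection theorem (e.g.\ Jankov--von~Neumann) to the continuous surjection $\mathfrak p\colon \Gamma_m\to \mathfrak p(\Gamma_m)$ between Polish spaces for each $m\in\N$, and glue the resulting sections to obtain a universally measurable map $\sigma\colon \mathfrak p(\Gamma)\to \Gamma$ satisfying $\mathfrak p\circ\sigma=\Id$. Setting $s:=\sigma\circ F^{-1}$ on $A:=F(\mathfrak p(\Gamma))\setminus\{0\}$, one obtains
\begin{equation*}
\bm\mu_{s(\bm \nu)}=F^{-1}(\bm \nu)=h(\bm \nu)\,\bm \nu, \qquad h(\bm \nu):=\|F^{-1}(\bm \nu)\|,
\end{equation*}
where $h$ is a strictly positive Borel function on $A$ (as the norm is weak-$*$ lower semicontinuous). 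Since a possible atom of $\xi$ at $\bm \nu=0$ contributes zero to both integrals in \eqref{e-mu-decomposition-on-J}, restricting to $A$ is harmless.

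Define then
\begin{equation*}
\eta := s_{\#}\!\left(\tfrac{1}{h}\,\xi\rest_{A}\right).
\end{equation*}
Testing against any $\Phi\in C_0(\R^d;\R^d)$ and using the push-forward formula together with $\bm \mu_{s(\bm \nu)}=h(\bm \nu)\bm \nu$ yields
\begin{equation*}
\int_\Gamma \langle \bm \mu_\gamma,\Phi\rangle\, d\eta(\gamma) = \int_A \frac{\langle \bm \mu_{s(\bm \nu)},\Phi\rangle}{h(\bm \nu)}\, d\xi(\bm \nu) = \int_A \langle \bm \nu,\Phi\rangle\, d\xi(\bm \nu) = \langle \bm \mu,\Phi\rangle,
\end{equation*}
which gives \eqref{e-mu-decomp}. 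The identical computation with the scalar identity $|\bm \mu_{s(\bm \nu)}|=h(\bm \nu)|\bm \nu|$ (valid because $h>0$) gives \eqref{e-|mu|-decomp}. For $\sigma$-finiteness I would cover $A=\bigcup_n A_n$ with $A_n:=\{\bm \nu\in A:h(\bm \nu)\ge 1/n\}$; then the mass of $\eta$ carried by $s(A_n)$ is at most $n\,\xi(A_n)\le n\,\|\xi\|<\infty$, so $\eta$ is $\sigma$-finite.

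The main obstacle is the selection step: the map $\mathfrak p$ is very far from injective (any Lipschitz reparametrization of a curve produces the same vector measure), so a pointwise inverse only exists via measurable-selection machinery, and some care is needed to glue the sections $\sigma_m$ on $\mathfrak p(\Gamma_m)$ into a global map. Universal measurability is sufficient for the purposes above, since after restricting to the $\sigma$-finite measure $\tfrac1h\,\xi\rest_A$ one can replace $\sigma$ by a genuinely Borel representative outside a null set without affecting any of the identities.
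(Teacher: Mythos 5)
Your argument is essentially the same as the paper's: you invert $F$ to recover the scale, then compose with a measurable section of $\mathfrak p$ and push $\tfrac1h\,\xi$ forward. The only technical difference is in the section step: the paper invokes a Borel-section theorem for continuous maps on compact spaces (so that $\mathfrak p|_{\Gamma_m}$ has a genuinely Borel inverse on a Borel subset $B_m\subset\Gamma_m$, cf.\ \cite[Thm.\ 6.9.7]{bogachev}), and avoids the gluing problem by decomposing $\hat\xi$ over the disjoint Borel sets $\mathfrak p(\Gamma_{m+1})\setminus\mathfrak p(\Gamma_m)$ and summing the resulting pushforwards $\eta_m$; you instead appeal to Jankov--von Neumann, obtaining a universally measurable section, and then note this suffices after restricting to a $\sigma$-finite measure. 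Your direct verification of \eqref{e-|mu|-decomp} from the second hypothesis in \eqref{e-mu-decomposition-on-J} is fine (and slightly more streamlined than the paper's triangle-inequality argument), and your $\sigma$-finiteness estimate is correct once one observes that $s$ is injective because $F\circ\mathfrak p$ is a left inverse, so $s^{-1}(s(A_n))=A_n$.
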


\begin{proof}
By Lemma~\ref{lemma:borel} the map $F \colon \mathfrak p(\Gamma) \to F(\mathfrak p (\Gamma))$ has Borel inverse $F^{-1}$ hence we can change variables using the map $F^{-1}$:

	\begin{equation*}
	\int_{F(\mathfrak p (\Gamma))} y \, d \xi(y) = \int_{F(\mathfrak p (\Gamma))} F(F^{-1}(y)) \, d \xi(y)
	= \int_{\mathfrak p(\Gamma)} F(\bm \nu) \, d (F^{-1}_\#\xi)(\bm \nu)
	= \int_{\mathfrak p(\Gamma)} \bm \nu \, d \hat\xi(\bm \nu).
	\end{equation*}
	where $\hat\xi$ denotes the measure on $\mathscr M(\R^d;\R^d)$ defined by
	\begin{equation*}
	\hat\xi(A):= \int_{A\setminus \{0\}} \frac{1}{\|\bm \nu\|}\, d (F^{-1}_\#\xi)(\bm \nu),
	\end{equation*}
	$A\subset \mathscr M(\R^d;\R^d)$ being an arbitrary Borel subset
	(clearly $\hat\xi$ is concentrated on $\mathfrak p(\Gamma)$).

	Since $\mathfrak p(\Gamma) = \bigcup_{m\in \N} \mathfrak p(\Gamma_m)$ we can write
	$\hat \xi$ as a sum of its restrictions $\hat\xi_m$ on the sets $\mathfrak p(\Gamma_{m+1}) \setminus \mathfrak p(\Gamma_{m})$, where $m\in \N$.

	By Lemma~\ref{lemma:borel} the map $\mathfrak p\colon \Gamma_m \to \mathscr M(\R^d;\R^d)$ is continous and the set $\Gamma_m$ is compact, hence there exists a Borel set $B_m \subset \Gamma_m$ such that the restriction of $\mathfrak p$ to $B_m$ is injective and $\mathfrak p(B_m) = \mathfrak p(\Gamma_m)$ (see e.g. \cite[Theorem 6.9.7]{bogachev}). Therefore the inverse map $\mathfrak q_m \colon \mathfrak p(\Gamma_m) \to \Gamma_m$ is Borel.
	Now we change variables using $\mathfrak q_m$:
	\begin{equation*}
	\int_{\mathfrak p(\Gamma)} \bm \nu \, d \hat\xi_m(\bm \nu)
	= \int_{\mathfrak p(\Gamma_m)} \mathfrak p(\mathfrak q_m(\bm \nu)) \, d \hat\xi_m(\bm \nu)
	= \int_{\Gamma_m} \mathfrak p(\gamma) \, d (\mathfrak (q_m)_\# \hat\xi_m)(\gamma)
	= \int_{\Gamma} \bm \mu_{\gamma} \, d (\eta_m)(\gamma)
	\end{equation*}
	where $\eta_m := (\mathfrak q_m)_\# \hat\xi_m$. Denoting $\eta := \sum_{m=1}^\infty \eta_m$ we ultimately obtain
	\begin{equation*}
  \bm \mu  = \int_{\Gamma}\bm \mu_{\gamma}  \, d \eta(\gamma). 
	\end{equation*}

	Equality holds for total variations as well: indeed, by triangle inequality
	\begin{equation*}
  |\bm \mu| \le  \int_{\Gamma} \left\vert\bm \mu_{\gamma} \right\vert \, d \eta(\gamma) \qquad \text{ as measures on } \R^d. 
	\end{equation*}	
	If the inequality above were strict, then by evaluating it on the whole $\R^d$ we would get a contradiction:
	\begin{equation*}
	\|\bm \mu\| = |\bm \mu|(\R^d) <  \int_{\Gamma} \left\vert \bm \mu_{\gamma} \right\vert(\R^d) \, d \eta(\gamma) =  \int_{\mathcal J(\R^d)} |\bm \nu|(\R^d) \, d\xi(\bm \nu) = \|\bm \mu\|.
	\end{equation*}

	Since $\|\bm \mu\| = \int \|\bm \mu_\gamma\| \, d\eta(\gamma)$ and for any $k\in \N$ the set $\{\gamma \in \Gamma : \|\bm \mu_\gamma\| > k^{-1}\}$ is Borel, it is clear that $\eta$ is $\sigma$-finite.
\end{proof}

We are now ready to prove the Main Theorem. 
\begin{proof}[Proof of Theorem \ref{thm:main}]
By Proposition \ref{prop:extreme_points_of_H} we have
\begin{equation*}
\ext(B_1^{\mathcal J}) = \left\{ \frac{1}{\ell(\gamma)} \bm \mu_\gamma: \, \gamma\in \Gamma \right\}
\subset F(\mathfrak p (\Gamma)).
\end{equation*}
By Remark \ref{rmk:extreme_borel} the set $\ext(B_1^{\mathcal J})$ is Borel.

Let $0 \ne \bm \mu \in \mathcal J(\R^2)$ and consider the normalized measure 
	\begin{equation*}
	\frac{\bm \mu}{\|\bm \mu\|}  \in B_1^{\mathcal J}.
	\end{equation*} 
By Choquet's Theorem \ref{thm:Choquet} there exists a Borel probability measure $\pi \in \mathscr P(\ext B_1^{\mathcal J})$ such that 
	\begin{equation*}
	\frac{\bm \mu}{\|\bm \mu\|}  = \int_{\ext  B_1^{\mathcal J}} y \, d \pi(y)
	\end{equation*}
	the integral being understood in the sense of Definition \ref{def:vector_integration}. 
	By triangle inequality we deduce from the equality above that 
	\begin{equation*}
	|\bm \mu| \le \|\bm \mu\| \int_{\ext  B_1^{\mathcal J}} |y| \, d \pi(y).
	\end{equation*}
	 Note that the latter inequality is in fact an equality, since otherwise by evaluating it on $\R^2$ we would get a contradiction:
	\begin{equation*}
	\|\bm \mu\| < \|\bm \mu\| \int_{\ext  B_1^{\mathcal J}} \|y\| \, d\pi(y) = \|\bm \mu\|.
	\end{equation*}
	In order pass to integration over $\Gamma$ instead of $\ext(B_1^{\mathcal J}) \subset \mathscr M(\R^2;\R^2)$, it remains to change
	variables by applying Lemma~\ref{l-reparam} with $\xi := \|\bm \mu\| \pi$.
	This concludes the proof.
	\end{proof}

Note that the elements of $\Gamma$ are not necessarily simple. However since the measure $\pi$ is concentrated on a set of measures induced by simple curves, it is easy to see from the proof of Theorem~\ref{thm:main} that for $\eta$-a.e. $\gamma\in \Gamma$ there exists a simple $\tilde{\gamma} \in \Gamma$ such that $\bm \mu_\gamma = \bm \mu_{\tilde{\gamma}}$.

\section{Measures as superposition of curves II: a proof using decomposition of \texorpdfstring{$\FV$}{FV} functions}

In this section, we present an alternative proof of Theorem \ref{thm:main}. This proof does not rely on Choquet's Theory, but it is based on the following decomposition result for $\FV$ functions.

\begin{theorem}\label{thm:BT-in_text}
	Let $f \in \FV(\R^d)$. There exists an at most countable family $\{f_i\}_{i \in I} \subset \FV(\R^d)$ of monotone functions such that the series 
	\begin{equation*}	
	\sum_{i \in I} f_i 
	\end{equation*}
	converges as an element of $\FV(\R^d)$ and  
	\begin{equation}\label{eq:BT}
	f=\sum_{i \in I} f_i  \qquad \text { with } \qquad  \|f\|_{\FV} = \sum_{i \in I} \|f_i\|_{\FV}.
	\end{equation}
\end{theorem}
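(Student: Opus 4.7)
The plan is to read the decomposition off the tree of $M$-connected components of the super-level sets of $f$, producing each $f_i$ as a maximal nested chain in this tree. The notion of monotone function here is the natural higher-dimensional analog of a monotone function on $\R$: a function whose (essential) super-level sets are all indecomposable (or empty), so that the family $\{\{f_i > t\}\}_{t\in \R}$ is totally ordered by inclusion.

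First, by the Coarea Formula \eqref{eq:coarea_borel} one has $V(f) = \int_{\R} P(\{f>t\})\,dt$, so for $\L^1$-a.e.\ $t$ the set $E_t := \{f > t\}$ has finite perimeter, and Theorem~\ref{thm:ACMM} uniquely decomposes $E_t = \bigsqcup_{j} E_t^j$ into indecomposable $M$-connected components with $P(E_t) = \sum_j P(E_t^j)$. I would form a forest whose nodes are the pairs $(t,j)$ and whose parent relation declares $(s,k)$ a parent of $(t,j)$ whenever $s<t$ and $E_t^j \subseteq E_s^k$ modulo $\L^d$-null sets; existence and uniqueness of the parent follow from the last assertion of Theorem~\ref{thm:ACMM}. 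A \emph{maximal monotone branch} $\beta$ is then a maximal totally ordered chain, corresponding to a nested family of indecomposable sets $\{F_t^\beta\}_{t\in I_\beta}$ over an interval $I_\beta \subseteq \R$, and one sets
\begin{equation*}
f_\beta(x) \,:=\, \int_{I_\beta} \1_{F_t^\beta}(x)\,dt,
\end{equation*}
whose super-level sets are exactly the $F_t^\beta$; in particular $f_\beta$ is monotone in the sense above. Since each level hosts at most countably many components (Theorem~\ref{thm:ACMM}) and every component lies in exactly one branch, the branches can be enumerated, producing the at most countable family $\{f_i\}_{i\in I}$.

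Next, I would establish norm additivity $V(f) = \sum_i V(f_i)$ by applying Proposition~\ref{p-decomposition-th+} at $\L^1$-a.e.\ fixed level $t$ to obtain $P(E_t) = \sum_i P(\{f_i > t\})$, and then integrating in $t$ via the Coarea Formula on both sides; the pointwise identity $f = \sum_i f_i$ then follows from the layer-cake representation. To upgrade to convergence of the series in $\FV$, I would apply the embedding $\FV(\R^d) \hookrightarrow L^{1^*}(\R^d)$ from Theorem~\ref{thm:347AFP} to the tails $f - \sum_{i\le N} f_i \in \FV(\R^d)$: their total variation tends to zero, and since the tails belong to $L^{1^*}$ the constant $m$ from Theorem~\ref{thm:347AFP} must vanish, giving $L^{1^*}$-convergence and hence convergence in $\FV$.

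The main obstacle is executing the forest construction \emph{measurably} in $t$: the components $E_t^j$ are defined only up to $\L^d$-null modifications, and one must check that the parent-child relation and the resulting branches are stable under such changes and depend measurably on the parameter $t$, so that $f_\beta$ is a well-defined Borel function. The standard workaround is to fix canonical representatives on a countable dense set of levels (say the rationals) and extend by right-continuity of $t\mapsto \{f > t\}$, after which the finite additivity of Proposition~\ref{p-decomposition-th+} lifts to countable groupings of branches at each level by monotone convergence. Once this bookkeeping is in place, the Coarea-based identity for $V(f)$ and for each $V(f_i)$ matches up termwise, closing the proof.
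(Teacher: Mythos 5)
Your proposal takes a genuinely different route from the paper. The paper proves this via iterative extraction lemmas (Lemmas~\ref{lemma:extraction}, \ref{lemma:extraction_plus}, \ref{lemma:extraction_plus_plus}): one peels off a single monotone piece at a time and then iterates by transfinite recursion over $\omega_1$. You instead propose to read the whole decomposition off at once from the forest of $M$-connected components of superlevel sets, taking the $f_i$ to be indexed by maximal chains. The forest picture is a natural and appealing idea, but as stated it has a concrete gap.

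The problem is over-counting below branching points. If a component of $\{f>t_0\}$ splits into two $M$-connected components at some higher level $t_1>t_0$, each child lies in a distinct maximal chain, and \emph{both} of those chains contain the common trunk below $t_1$. Concretely, in $\R^2$ take disjoint closed squares $A,B$ inside a larger square $R$ and set $f=\1_R+\1_A+\1_B$. Then $\{f>t\}=R$ for $t\in(0,1)$ and $\{f>t\}=A\cup B$ for $t\in(1,2)$, so the two maximal chains give
\begin{equation*}
f_{\beta_1}=\1_R+\1_A,\qquad f_{\beta_2}=\1_R+\1_B,\qquad f_{\beta_1}+f_{\beta_2}=2\,\1_R+\1_A+\1_B\neq f.
\end{equation*}
For the same reason the level-wise perimeter identity you invoke fails: at $t\in(0,1)$ one has $P(\{f_{\beta_1}>t\})+P(\{f_{\beta_2}>t\})=2P(R)>P(R)=P(\{f>t\})$, so Proposition~\ref{p-decomposition-th+} cannot be applied in the way you suggest (it concerns \emph{disjoint} unions of components at a fixed level, whereas your $\{f_i>t\}$ coincide on the trunk). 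To make a forest decomposition work one must ensure that at every branching event exactly one child inherits the trunk and all the others start at the branching level; this chopping is exactly what the paper's Lemma~\ref{lemma:extraction} does (it caps at a fixed height $a$ and tracks only downward), and it is why the paper proceeds one piece at a time with transfinite recursion rather than summing over all maximal chains simultaneously.

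There is also a secondary mismatch with the paper's notion of monotone. The paper requires \emph{both} $\{f_i>t\}$ and $\{f_i\le t\}$ to be indecomposable for a.e.\ $t$ (equivalently, via Remark~\ref{rmk:still_holds}, it suffices that superlevel sets are \emph{simple}). Your branches only guarantee that the superlevel sets are indecomposable, which is strictly weaker: e.g.\ an annulus is indecomposable but not simple, so a function with annular superlevel sets would satisfy your condition without being monotone. This is why the paper needs the additional saturation step (Lemma~\ref{lemma:extraction_plus}) after extracting the indecomposable piece, and that step again interacts nontrivially with the rest of the decomposition, reinforcing the need for the iterative argument.
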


For the definition of monotone function and for a proof of Theorem \ref{thm:BT-in_text} as well we refer the reader to the Appendix \ref{app}.

\begin{proof}[Proof of Theorem \ref{thm:main} using Theorem \ref{thm:BT-in_text}] 
	Let $\bm \mu \in \mathcal J(\R^2)$ and let $H \in \FV(\R^2)$ be the function such that $\bm \mu = \nabla^\perp H$, whose existence and uniqueness are granted by Proposition \ref{prop:isomorphism}.
	
	\emph{Case 1.} Suppose first that $H$ is monotone. 
	Let $E_t:=\{H>t\}$. Since the function $H$ lies in $\FV(\R^2)$ we have $H \in L^{1^*}(\R^2)$: using Chebyshev's inequality this integrability property implies that for a.e. $t \in \R$ it holds $\L^2(E_t)<\infty$. Combined with Coarea Formula, this observation yields the existence of a set $N \subset \R$ such that $\L^1(N)=0$ and $E_t$ has finite measure and finite perimeter for every $t \in \R\setminus N$.
	Consider now the function $f \colon \R\to \mathcal J(\R^2)$ defined by 
	\begin{equation*}
	f(t) := \begin{cases}
	\frac{\nabla^\perp \1_{E_t} }{P(E_t)} & \text{ if } \L^2(E_t) >0 \text{ and }  t \notin N \text{ with } P(E_t)>0 \\
	0 & \text{ otherwise}
	\end{cases}
	\end{equation*}
	and the measure $\rho \in \mathscr M_+(\R)$ 
	\begin{equation*}
	\rho(dt) := P(E_t) \L^1(dt). 
	\end{equation*}
	By Coarea Formula, we have 
	\begin{equation}\label{eq:coarea_f_rho}
	\nabla^\perp H = \int_{\R} f(t) \, d\rho(t)
	\end{equation}
	and Fubini's Theorem further ensures that $f$ is a measurable measure-valued map (see \cite[Def. 2.25]{AFP}). 
	In particular, from \eqref{eq:coarea_f_rho} we deduce for any $\Psi \in C_c(\R^2)^2$ 
	\begin{equation}\label{eq:test}
	\langle \nabla^\perp H, \Psi \rangle = \int_{\R} \langle f(t) , \Psi \rangle \, d\rho(t) = \int_{f(\R)} \langle y, \Psi \rangle d\eta(y)
	\end{equation}
	where we have set 
	\begin{equation*}
	\xi := f_{\#}\rho.
	\end{equation*}
	From \eqref{eq:test} and from the arbitrariness of test function $\Psi$, we deduce the sought formula 
	\begin{equation*}
	\nabla^\perp H = \int_{\mathcal J(\R^2)} y\, d\eta(y). 
	\end{equation*}
	Observe that, for every $t \in \R \setminus N$ such that $\L^2(E_t) >0 $ the computations in \eqref{eq:long_perim_curves} yield the equality 
		\begin{equation*}
		f(t) = \frac{\bm \mu_{\gamma_t}}{P(E_t)}
		\end{equation*}
	where $\gamma_t$ is the parametrization of $\partial^\star E_t$ given by Theorem \ref{thm:simple-boundary}. 
	Thus, by the very definition of $f$, the measure $\xi$ is concentrated on $F(\mathfrak p(\Gamma))$ (see page~\pageref{e-maps-p-and-F}). Moreover,
	\begin{equation}\label{eq:total_variation_ineq}
	\| \xi \| = \| f_{\#}\rho  \| = \| \rho  \| = \int_\R  P(E_t) \L^1(dt) = \| \nabla^\perp H \|.
	\end{equation}
		
	\emph{Case 2.}  If $H$ is not monotone, apply Theorem \ref{thm:BT-in_text} to the function $H$ and let $\{H_i\}_{i \in I}$ be at most countable family of monotone functions satisfying \eqref{eq:BT} (without loss of generality we may assume that $I=\N$). Let $\xi_i$ be a measure representing $\nabla^\perp H_i$ (obtained as in Case 1, since $H_i$ is monotone). Then it is easy to see that
	\begin{equation*}
	\xi:= \sum_{i=1}^{\infty} \xi_i
	\end{equation*}
	defines a measure on $\mathcal J(\R^2)$, as the series converges strongly: indeed, by \eqref{eq:total_variation_ineq} and \eqref{eq:BT} we get 
	\begin{equation*}
	\sum_{i=1}^{\infty} \| \xi_i \| = \sum_{i=1}^{\infty} \|\nabla^\perp H_i \| = \| \nabla^\perp H \| < \infty. 
	\end{equation*}
	Since the series above converge, we can pass to the limit as $n\to \infty$ in the equalities
	\begin{equation*}
	\sum_{i=1}^n \nabla^\perp H_i = \int_{\mathcal J(\R)} y\, d ({\textstyle\sum_{i=1}^n \xi_i})(y),
	\qquad
	\sum_{i=1}^n |\nabla^\perp H_i| = \int_{\mathcal J(\R)} |y|\, d ({\textstyle\sum_{i=1}^n \xi_i})(y).
	\end{equation*}
	We thus see that $\bm \mu$ and $\xi$ defined above satisfy \eqref{e-mu-decomposition-on-J}.
	It remains to change variables using Lemma~\ref{l-reparam}.
\end{proof}

\section{Linear rigidity for vector-valued measures}

In this section we give a proof of Theorem~\ref{t-linear-rigidity}, which is inspired by (and generalizes) one of the results from \cite{LeoSar17} (see Theorem~1.2 therein).

\begin{lemma}\label{lemma:orientation}
	Let $\bm \mu \in \mathscr M(\R^d;\R^d)$ and consider its polar decomposition $\bm \mu = \tau |\bm \mu|$. Suppose that there exists $\eta \in \mathscr M_+(\Gamma)$ such that \eqref{e-mu-decomp} and \eqref{e-|mu|-decomp} hold. Then for $\eta$-a.e. $\gamma\in \Gamma$
	\begin{equation*}
	\gamma' (t) =\tau(\gamma(t)) |\gamma'(t)|
	\end{equation*}
	for a.e. $t \in [0,1]$.
\end{lemma}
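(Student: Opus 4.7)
The plan is to exploit the sharpness encoded in~\eqref{e-|mu|-decomp} by testing both decompositions against the polar vector field $\tau$ itself. After redefining $\tau$ on a $|\bm{\mu}|$-negligible set I may assume $|\tau(x)|\le 1$ for every $x\in\R^d$. The polar identity $\tau\cdot d\bm{\mu}=d|\bm{\mu}|$ combined with~\eqref{e-mu-decomp} gives
\begin{equation*}
\|\bm{\mu}\|\;=\;\int_{\R^d}\tau\cdot d\bm{\mu}\;=\;\int_\Gamma\int_0^1\tau(\gamma(t))\cdot\gamma'(t)\,dt\,d\eta(\gamma),
\end{equation*}
while the elementary bound $\int\tau\cdot d\bm{\mu}_\gamma\le\|\bm{\mu}_\gamma\|$ (valid since $|\tau|\le 1$) together with~\eqref{e-|mu|-decomp} evaluated on $\R^d$ produces the reverse estimate
\begin{equation*}
\int_\Gamma\int_0^1\tau(\gamma(t))\cdot\gamma'(t)\,dt\,d\eta(\gamma)\;\le\;\int_\Gamma\|\bm{\mu}_\gamma\|\,d\eta(\gamma)\;=\;\|\bm{\mu}\|.
\end{equation*}
Pinching forces, for $\eta$-a.e.\ $\gamma$, the equality $\int_0^1\tau(\gamma(t))\cdot\gamma'(t)\,dt=\|\bm{\mu}_\gamma\|$.

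Introducing the polar decomposition $\bm{\mu}_\gamma=\tau_\gamma|\bm{\mu}_\gamma|$, the preceding identity rewrites as $\int\tau\cdot\tau_\gamma\,d|\bm{\mu}_\gamma|=|\bm{\mu}_\gamma|(\R^d)$. Since the integrand is bounded pointwise by $1$, equality compels $\tau\cdot\tau_\gamma=1$, and hence
\begin{equation*}
\tau(x)=\tau_\gamma(x)\qquad\text{for }|\bm{\mu}_\gamma|\text{-a.e.\ }x\in\R^d,
\end{equation*}
for $\eta$-a.e.\ $\gamma\in\Gamma$. This is a \emph{target-space} alignment between the two polar vectors.

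What remains is to transport this identification back to the parameter $t$. By the very definition of $\bm{\mu}_\gamma$ and the area formula, when $\gamma$ is injective on $[0,1)$ one has $|\bm{\mu}_\gamma|=\gamma_\#(|\gamma'(t)|\mathcal{L}^1\rest[0,1])$ and $\tau_\gamma(\gamma(t))=\gamma'(t)/|\gamma'(t)|$ for a.e.\ $t$ with $\gamma'(t)\ne 0$; composing with the previous step yields exactly $\gamma'(t)=\tau(\gamma(t))|\gamma'(t)|$, the identity being trivial on the $\mathcal{L}^1$-null set where $\gamma'=0$. The main obstacle is precisely the regularity of $\gamma$ needed in this last step: a generic $\gamma\in\Gamma$ may contain cancellations, i.e.\ $\|\bm{\mu}_\gamma\|<\int_0^1|\gamma'|\,dt$, in which case $\tau_\gamma\circ\gamma$ is only an averaged direction and need not match $\gamma'/|\gamma'|$ at every parameter. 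To overcome this I invoke the structural information---available from the Main Theorem---that for $\eta$-a.e.\ $\gamma$ there exists a simple $\tilde\gamma\in\Gamma$ with $\bm{\mu}_\gamma=\bm{\mu}_{\tilde\gamma}$, and carry out the change-of-variables argument along $\tilde\gamma$, transferring the conclusion back to $\gamma$ through the coincidence of the two measures.
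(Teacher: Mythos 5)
Your opening move — testing both decompositions against the polar field $\tau$ — is exactly the paper's, and the pinch you obtain,
\begin{equation*}
\int_0^1 \tau(\gamma(t))\cdot\gamma'(t)\,dt = \|\bm\mu_\gamma\| \qquad\text{for $\eta$-a.e.\ $\gamma$,}
\end{equation*}
is a correct consequence of \eqref{e-mu-decomp}, \eqref{e-|mu|-decomp} and $|\tau|\le 1$. The divergence from the paper occurs immediately after. The paper pinches \emph{once more, in parameter space}: it uses the identity $\|\bm\mu_\gamma\| = \int_0^1 |\gamma'(t)|\,dt$ to turn the displayed equality into $\int_0^1\bigl(\tau(\gamma(t))\cdot\gamma'(t) - |\gamma'(t)|\bigr)\,dt = 0$, and then the pointwise bound $\tau(\gamma(t))\cdot\gamma'(t)\le|\gamma'(t)|$ forces equality for a.e.\ $t$, which is precisely the conclusion. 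No polar decomposition of $\bm\mu_\gamma$ and no change of variables are ever invoked.

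Your detour through the polar decomposition $\bm\mu_\gamma = \tau_\gamma|\bm\mu_\gamma|$ produces the genuine but \emph{weaker} (target-space) statement $\tau = \tau_\gamma$ for $|\bm\mu_\gamma|$-a.e.\ point, and the step back to parameter space is where the argument breaks. The fix you propose — invoke the Main Theorem to obtain a simple $\tilde\gamma$ with $\bm\mu_\gamma = \bm\mu_{\tilde\gamma}$, prove the identity along $\tilde\gamma$, and then ``transfer back through the coincidence of the two measures'' — does not work: the sought conclusion is a statement about the derivative $\gamma'(\cdot)$ of the specific parametrization $\gamma$, whereas $\bm\mu_\gamma=\bm\mu_{\tilde\gamma}$ is an equality of measures and carries no information about $\gamma'$ at parameters that are cancelled in the induced measure (e.g.\ a back-and-forth sub-arc of $\gamma$ that is absent from $\tilde\gamma$). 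On such a sub-arc both $\pm\tau(\gamma(t))$ must occur as unit tangents, so $\gamma'(t)=\tau(\gamma(t))|\gamma'(t)|$ cannot hold there, yet $\tilde\gamma$ sees nothing wrong. Moreover, the hypothesis of the lemma is only that $\eta$ satisfies \eqref{e-mu-decomp}--\eqref{e-|mu|-decomp}; the existence of a simple $\tilde\gamma$ is a conclusion of the (2D) Main Theorem, not something available here, so you are silently strengthening the hypotheses.

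The cleanest repair of your argument is in fact to drop the $\tau_\gamma$ step entirely: combine your pinch $\int_0^1\tau(\gamma(t))\cdot\gamma'(t)\,dt = \|\bm\mu_\gamma\|$ with the identity $\|\bm\mu_\gamma\| = \int_0^1|\gamma'(t)|\,dt$ (which the paper uses as well and which encodes the absence of cancellation for $\eta$-a.e.\ $\gamma$), and then conclude directly from the pointwise inequality $\tau(\gamma(t))\cdot\gamma'(t)\le|\gamma'(t)|$. This is the paper's argument.
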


\begin{proof} Since $|\bm \mu|$ is a finite measure and $C_c(\R^d)$ is dense in $L^1(|\bm \mu|)$, we can use $\tau$	as a test function in the distributional formulation of \eqref{e-mu-decomp}, obtaining  
\begin{equation*}
\int_{\R^d} \tau \cdot d\bm \mu =\int_{\Gamma} \bm  \tau \cdot \bm \mu_{\gamma} \, d\eta(\gamma) = 
\int_{\Gamma} \int_0^1 \tau (\gamma(t)) \cdot  \gamma'(t) \, dt \, d\eta(\gamma). 
\end{equation*}
On the other hand, 
\begin{equation*}
\int_{\R^d} \tau \cdot d\bm \mu = |\bm \mu| (\R^d)=
\int_{\Gamma}  \vert \bm \mu_{\gamma}  \vert \, d\eta(\gamma) = \int_{\Gamma} \int_0^1  |\gamma'(t)| \, dt \, d\eta(\gamma). 
\end{equation*}
Therefore
\begin{equation}\label{e-tmp-int-zero}
 \int_{\Gamma} \int_0^1 \Big( \tau (\gamma(t)) \cdot  \gamma'(t) -  |\gamma'(t)| \Big) \, dt \, d\eta(\gamma) = 0. 
\end{equation}
The integrand is non-positive, since
\begin{equation*}
\tau (\gamma(t)) \cdot \gamma'(t)  \le| \tau(\gamma(t))| \cdot |\gamma'(t) | =|\gamma'(t)|.
\end{equation*}
hence by \eqref{e-tmp-int-zero} for $\eta$-a.e. $\gamma$
\begin{equation*}
\tau (\gamma(t)) \cdot \gamma'(t)  = |\gamma'(t)|
\end{equation*}
for a.e. $t \in [0,1]$.
\end{proof}

Recall the following definition: $\bm \sigma\in \mathscr M(\R^d;\R^d)$ is called a \emph{subcurrent of $\bm \mu \in \mathscr M(\R^d;\R^d)$} if $$\|\bm \mu\| = \|\bm \mu - \bm \sigma\| + \|\bm \sigma\|.$$

\begin{proposition}\label{prop-generic-subcurrent-form}
Let $\bm \mu \in \mathscr M(\R^d;\R^d)$.
Then $\bm \sigma\in \mathscr M(\R^d;\R^d)$ is a subcurrent of $\bm \mu$ if and only if
\begin{equation*}
\bm \sigma = g \bm \mu
\end{equation*}
where $g \in L^1(|\bm \mu|)$ satisfies $0\le g(x) \le 1$
for $|\bm \mu|$-a.e. $x\in \R^d$.
\end{proposition}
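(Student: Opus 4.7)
The plan is to prove both implications by decomposing $\bm\sigma$ with respect to $|\bm\mu|$ and analyzing the equality case of the triangle inequality pointwise.

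\medskip

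\textbf{Sufficiency.} Suppose $\bm\sigma = g\bm\mu$ with $0\le g\le 1$ in $L^1(|\bm\mu|)$. I would simply compute: $|\bm\sigma| = g|\bm\mu|$ and $\bm\mu-\bm\sigma = (1-g)\bm\mu$, hence $|\bm\mu-\bm\sigma| = (1-g)|\bm\mu|$, and thus
\begin{equation*}
\|\bm\sigma\| + \|\bm\mu-\bm\sigma\| = \int_{\R^d} g\,d|\bm\mu| + \int_{\R^d}(1-g)\,d|\bm\mu| = \|\bm\mu\|.
\end{equation*}
This direction is essentially a one-line verification.

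\medskip

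\textbf{Necessity.} Write the polar decomposition $\bm\mu = \tau|\bm\mu|$ with $|\tau|=1$ $|\bm\mu|$-a.e. Apply Lebesgue decomposition to $\bm\sigma$ relative to $|\bm\mu|$, so that $\bm\sigma = h|\bm\mu| + \bm\sigma^s$ with $h\in L^1(|\bm\mu|;\R^d)$ and $\bm\sigma^s\perp|\bm\mu|$. Then
\begin{equation*}
|\bm\sigma| = |h|\,|\bm\mu| + |\bm\sigma^s|, \qquad |\bm\mu-\bm\sigma| = |\tau-h|\,|\bm\mu| + |\bm\sigma^s|,
\end{equation*}
since in each case the absolutely continuous and singular parts (with respect to $|\bm\mu|$) are mutually singular. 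Plugging these into the subcurrent identity $\|\bm\mu\| = \|\bm\mu-\bm\sigma\| + \|\bm\sigma\|$ gives
\begin{equation*}
\int_{\R^d} 1\,d|\bm\mu| = \int_{\R^d}\bigl(|\tau - h| + |h|\bigr)\,d|\bm\mu| + 2\|\bm\sigma^s\|.
\end{equation*}
The triangle inequality $|\tau-h| + |h| \ge |\tau| = 1$ holds $|\bm\mu|$-a.e., so the displayed equation forces both $\|\bm\sigma^s\| = 0$ and the pointwise equality $|\tau(x)-h(x)| + |h(x)| = 1$ for $|\bm\mu|$-a.e. $x$.

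\medskip

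\textbf{Equality case in the triangle inequality.} This is the only step that requires a small argument: if $|h| + |\tau - h| = |\tau|$ with $|\tau| = 1$, then $h$ and $\tau - h$ must be nonnegative scalar multiples of the same unit vector (either one of them vanishes, or $\tau - h = \lambda h$ for some $\lambda \ge 0$). In every case one obtains $h(x) = g(x)\tau(x)$ for some scalar $g(x)\in[0,1]$. Measurability of $g$ follows from $g = h\cdot\tau$ (the inner product), which is a Borel function on the set $\{|\bm\mu|\ne 0\}$ (on the complement we set $g=0$ without affecting the identity). Therefore $\bm\sigma = h|\bm\mu| = g\tau|\bm\mu| = g\bm\mu$, and integrability $g\in L^1(|\bm\mu|)$ is immediate from $0\le g\le 1$ and finiteness of $|\bm\mu|$ on compact sets (or directly since $g|\bm\mu| = |\bm\sigma|$ is a finite measure).

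\medskip

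The only point that is not completely routine is the equality case analysis in the triangle inequality, but this is elementary in $\R^d$; everything else is standard Radon--Nikodym bookkeeping.
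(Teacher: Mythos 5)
Your proof is correct and follows essentially the same route as the paper: Lebesgue decomposition of $\bm\sigma$ with respect to $|\bm\mu|$, killing the singular part via the triangle inequality, and then reducing to the pointwise equality case of the triangle inequality to conclude $h = g\tau$ with $g \in [0,1]$. The only cosmetic difference is that the paper kills the singular part and then derives the pointwise identity in two separate passes through the subcurrent equation, whereas you extract both from a single displayed equality; you also spell out the measurability of $g$ via $g = h\cdot\tau$, which the paper leaves implicit.
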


\begin{proof}
\emph{Sufficiency.} If $g \in L^1(|\bm \mu|)$ satisfies $0\le g(x) \le 1$ and $\bm \sigma = g \bm \mu$ then
\begin{equation*}
|\bm \mu - \bm \sigma| + |\bm \sigma| = (1 - g) |\bm \mu| + g |\bm \mu| = |\bm \mu|,
\end{equation*}
and it remains to evaluate the equality above on $\R^d$.

\emph{Necessity.}
By Radon--Nikodym theorem there exist mutually singular $\bm \sigma^a, \bm \sigma^s \in \mathscr M(\R^d;\R^d)$ such that $\bm \sigma^a \ll |\bm \mu|$, $\bm \sigma^s \perp |\bm \mu|$ and $\bm \sigma = \bm \sigma^a + \bm \sigma^s$. Then by definition of subcurrent
\begin{equation*}
\begin{aligned}
\| \bm \mu \| &= \| \bm \mu - \bm \sigma \| + \| \bm \sigma \| \\
              &= \| \bm \mu - \bm \sigma^a \| + \| \bm \sigma^a \| + 2 \| \bm \sigma^s \| \\
              &\ge \| \bm \mu \| + 2 \| \bm \sigma^s \|
\end{aligned}
\end{equation*}
by triangle inequality, hence $\|\bm \sigma^s\|=0$.
Therefore $\bm \sigma = \theta |\bm \mu|$ and $\bm \mu = \tau |\bm \mu|$ for some $\theta, \tau \in L^1(|\bm \mu|;\R^d)$ (by polar decomposition). Writing again the definition of subcurrent we obtain
\begin{equation*}
\int (|\tau| - |\tau - \theta| - |\theta|) \, d |\bm \mu| = 0,
\end{equation*}
which implies (in view of triangle inequality) that 
\begin{equation}\label{e-wrk2-int}
|\tau(x)| - |\tau(x) - \theta(x)| - |\theta(x)| = 0
\end{equation}
for $|\bm \mu|$-a.e. $x\in \R^d$. In particular, for $|\bm \mu|$-a.e. $x\in \R^d$ if $\tau(x) = 0$ then $\theta(x) = 0$.
Since vectors $a=\theta(x)$ and $b = \tau(x) - \theta(x)$ with $a\ne 0$ satisfy $|a+b| - |a| - |b| = 0$ if and only if $b = |b| \frac{a}{|a|}$, 
we conclude that there exists $g=g(x)\in \R$ such that $\theta(x) = g(x) \tau(x)$.
Substituting this into \eqref{e-wrk2-int} we conclude that $0\le g(x) \le 1$ for $|\bm \mu|$-a.e. $x\in \R^d$.
\end{proof}

\begin{corollary}\label{cor:acyclic}
Suppose that $\bm \nu \in \mathscr M_\loc(\R^d;\R^d)$ has polar decomposition $\bm \nu = \tau |\bm \nu|$.
If $\tau_1(x) > 0$ for~$|\bm \nu|$-a.e. $x\in \R^d$ then $\bm \nu$ is acyclic.
\end{corollary}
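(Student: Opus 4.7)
The plan is to reduce the question to a one-line computation after extracting, via Proposition \ref{prop-generic-subcurrent-form}, that every cycle of $\bm \nu$ has the form $\bm \sigma = g \bm \nu$ with $g \in L^1(|\bm \nu|)$, $0 \le g \le 1$. The point is that this makes $\bm \sigma$ a finite (signed) measure, so one can test the identity $\dive \bm \sigma = 0$ against an unbounded smooth function built from the first coordinate and then integrate by parts.

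Concretely, I would fix a cut-off $\chi \in C_c^\infty(\R^d)$ with $\chi \equiv 1$ on $B_1$, $\supp\chi \subset B_2$, set $\chi_R(x) := \chi(x/R)$, and use the compactly supported test function $\phi_R(x) := x_1 \chi_R(x)$. Writing $\bm \sigma = \tau |\bm \sigma|$ with $|\bm \sigma| = g|\bm \nu|$ (the polar decomposition of $\bm \sigma$ uses the same $\tau$ since $g \ge 0$), the vanishing of $\dive \bm \sigma$ gives
\begin{equation*}
0 \;=\; \int_{\R^d} \chi_R(x)\, \tau_1(x) \, d|\bm \sigma|(x) \;+\; \int_{\R^d} x_1\, \nabla \chi_R(x) \cdot \tau(x) \, d|\bm \sigma|(x).
\end{equation*}
Since $|x_1 \nabla \chi_R| \le 2\|\nabla \chi\|_\infty$ is bounded and supported in $B_{2R}\setminus B_R$, the second integral is controlled by $|\bm \sigma|(\R^d \setminus B_R)$, which tends to $0$ as $R\to\infty$ because $|\bm \sigma|$ is a \emph{finite} measure; the first integral converges to $\int \tau_1 \, d|\bm \sigma|$ by dominated convergence. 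Hence $\int \tau_1 \, d|\bm \sigma| = 0$.

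To finish, observe that $|\bm \sigma| = g|\bm \nu|$ is absolutely continuous with respect to $|\bm \nu|$, so the hypothesis $\tau_1 > 0$ $|\bm \nu|$-a.e. transfers to $\tau_1 > 0$ $|\bm \sigma|$-a.e. Decomposing $\R^d = \bigcup_n \{\tau_1 > 1/n\}$ and applying Chebyshev shows that the integral of a strictly positive function against a nonnegative measure can vanish only if the measure itself is zero; thus $|\bm \sigma| = 0$ and $\bm \sigma = 0$, proving acyclicity.

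The only delicate point is the very first step: Proposition \ref{prop-generic-subcurrent-form} is stated for $\bm \nu \in \mathscr M(\R^d;\R^d)$ rather than for locally finite measures, so one needs to interpret ``cycle of $\bm \nu$'' through the measure-theoretic identity $|\bm \nu| = |\bm \nu - \bm \sigma| + |\bm \sigma|$ used in the proof of the Proposition (which is equivalent to the normed formulation in the finite case but survives verbatim for locally finite $\bm \nu$). Once this is acknowledged, the entire argument is dimension-free and does not invoke any superposition/decomposition theorem — the single integration by parts is all that is needed.
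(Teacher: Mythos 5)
Your proof is correct and mirrors the paper's: both reduce to Proposition \ref{prop-generic-subcurrent-form} to write a cycle as $\bm\sigma = g\bm\nu$ with $0\le g\le1$, and then test $\dive\bm\sigma=0$ against a function whose gradient singles out the first component $\tau_1$ --- the paper uses $\varphi(x)=\operatorname{\mathrm{atan}}(x_1)$ after a short lemma extending the admissible test functions to bounded $C^\infty$ functions with bounded gradient, while you use $x_1\chi_R$ directly and absorb the boundary term via finiteness of $|\bm\sigma|$, which is technically equivalent. Your explicit remark that ``cycle'' must be read through the measure-level identity $|\bm\nu|=|\bm\nu-\bm\sigma|+|\bm\sigma|$ when $\bm\nu$ is only locally finite is correct and is left implicit in the paper.
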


\begin{proof}
For any $\bm \mu \in \mathscr M(\R^d;\R^d)$ satisfying \eqref{e-div-mu-eq-rho} with some $\rho\in \mathscr M(\R^d)$ the distributional formulation of \eqref{e-div-mu-eq-rho} holds for any test function $\fhi\in C^\infty(\R^d)$ such that $\|\fhi_\infty\| + \|\nabla \fhi\|_\infty < \infty$. In order to prove this it is sufficient to consider $\omega\in C_c^\infty(\R^d)$ such that $\omega(x) = 1$ if $|x|\le 1$ and $\omega(x) = 0$ if $|x|\ge 2$ and pass to the limit in
\begin{equation*}
-\int_{\R^d} \nabla(\fhi(x) \omega(R^{-1} x)) \cdot d\bm \mu(x) = \int_{\R^d} \fhi(x) \omega(R^{-1} x) \, d \rho(x)
\end{equation*}
as $R\to \infty$ using dominated convergence theorem.

In particular, if $\bm \sigma$ is a cycle of $\bm \nu$ then by Proposition~\ref{prop-generic-subcurrent-form} there exists $g \in L^1(|\bm \nu|)$ such that $0\le g(x) \le 1$
for $|\bm \nu|$-a.e. $x\in \R^d$ and $\bm \sigma = g \bm \nu$.
Writing the distributional formulation of $\dive (\bm \sigma) = 0$ with the test function $\fhi(x) = \operatorname{\mathrm{atan}}(x_1)$ we get
\begin{equation*}
- \int_{\R^d} \frac{g(x) \tau_1(x)}{1 + x_1^2} \, d |\bm \nu|(x) = 0
\end{equation*}
hence $g(x)=0$ for $|\bm \nu|$-a.e. $x\in \R^d$. Therefore $\bm \sigma = 0$ is the only cycle of $\bm \nu$.
\end{proof}

\begin{proof}[Proof of Theorem~\ref{t-linear-rigidity}]
Suppose that $\bm \nu \in \mathscr M_\loc(\R^d;\R^d)$ satisfies (i)--(iii) from Definition~\ref{d-linear-rigidity}.

Let $\omega\in C_c^\infty(\R^d)$ be a nonnegative function such that $\omega(x) = 1$ if $|x|\le 1$ and $\omega(x) = 0$ if $|x|\ge 2$. Let~$h>0$ and $r>0$ and let $R>0$ be such that $r^2 + h^2 < R^2$ and $r + c^{-1} h < R$, where $c>0$ is the constant from Definition~\ref{d-linear-rigidity}.

Let $\bm \mu = f \cdot \bm \nu$ where  $f(x) = \omega(x/R)$. Clearly $\dive \bm \mu$ belongs to $\mathscr M(\R^d)$ and is concentrated on 
\begin{equation*}
A:=\{x \in \R^d : x_d \ge 0, R \le |x| \le 2 R\}.
\end{equation*}
Moreover, $\bm \mu$ is acyclic by Corollary~\ref{cor:acyclic}.

For any $x\in \R^d$ let $x_o:=(x_1, \ldots, x_{d-1})$. Let
\begin{equation}\label{eq:def_T}
T:= \{y\in \R^d : 0 < y_d < h, \; |y_o| < r + c^{-1}(h - y_d) \}.
\end{equation}

Let $\eta\in \mathscr M_+(\Gamma)$ be given by Theorem~\ref{t-acyclyc-decomposition} applied to $\bm \mu$ (in particular \eqref{e-mu-decomp}--\eqref{e-|div-mu|-decomp} hold).
Let 
\begin{equation*}
\Gamma_T := \{\gamma \in \Gamma : |\bm \mu_\gamma|(T) > 0\}.
\end{equation*}

Note that $\bm \mu = \tau f |\bm \nu| = \tau |\bm \mu|$ is the polar decomposition of $\bm \mu$. Hence by Lemma~\ref{lemma:orientation} for $\eta$-a.e. $\gamma \in \Gamma_T$ for a.e. $z\in[0,1]$ we have
\begin{equation*}
\gamma'(z) = \tau(\gamma(z)) |\gamma'(z)|
\end{equation*}
Writing this equation for $\gamma_d$ and $\gamma_o$ separately and using condition (iii) from Definition~\ref{d-linear-rigidity} we get
\begin{equation}\label{e-wrk-gamma}
|\gamma_o'(z)| = |\tau_o(\gamma(z))| \cdot |\gamma'(z)| 
\le |\tau(\gamma(z))| \cdot |\gamma'(z)|
\le \frac{1}{c} \tau_d(\gamma(z)) \cdot |\gamma'(z)|
= \frac{1}{c} \gamma_d'(z). 
\end{equation}
For $\eta$-a.e. $\gamma \in \Gamma_T$ there exists $t\in(0,1)$ such that $\gamma(t) \in T$.
Then by inequality \eqref{e-wrk-gamma} we obtain
\begin{equation*}
|\gamma_o(0)| \le |\gamma_o(t)| + \left|\int_0^t \gamma_o'(z) \, dz\right| \le |\gamma_o(t)| + \frac{1}{c} (\gamma_d(t) - \gamma_d(0)),
\end{equation*}
hence $\gamma(0) \in \overline T$. Clearly $\gamma_d'\ge 0$ a.e., so $\gamma_d(0) \le \gamma_d(t) \le h$, since $\gamma(t)\in T$.

Note that for $\eta$-a.e. $\gamma\in \Gamma_T$ we have $\gamma(1)\ne \gamma(0)$.
Indeed, otherwise the measure
\begin{equation*}
\bm \sigma := \int_{\{\gamma\in \Gamma : \gamma(0)=\gamma(1)\}} \bm \mu_\gamma \, d\eta(\gamma)
\end{equation*}
would be a nonzero cycle of $\bm \mu$, which is not possible since $\bm \mu$ is acyclic.
Therefore for $\eta$-a.e. $\gamma\in \Gamma_T$
\begin{equation*}
|\dive \bm \mu_\gamma|(\overline T) = \delta_{\gamma(0)}(\overline T) + \delta_{\gamma(1)}(\overline T) \ge 1
\end{equation*}
since $\gamma(0) \in \overline T$. But $|\dive \bm \mu|$ is concentrated on $A$ and $A \cap T = \emptyset$, hence
\begin{equation*}
\eta(\Gamma_T) = \int_{\Gamma_T} 1 \, d\eta(\gamma) \le \int_{\Gamma_T} |\dive \bm \mu_\gamma|(\overline T) \, d\eta(\gamma) = |\dive \bm \mu|(\overline T) = 0
\end{equation*}
and therefore $|\bm \mu|(T) = \int_{\Gamma_T} |\bm \mu_\gamma|(T) \, d\eta(\gamma) = 0$.
By arbitrariness of $h$ and $r$ we conclude that $\bm \mu = 0$.
\end{proof}

\begin{figure}[!ht]
	\centering
	\begin{tikzpicture}[scale=.9, font=\footnotesize]
	\usetikzlibrary{arrows}
	\usetikzlibrary{decorations.pathreplacing,decorations.markings}
	\tikzset{arrow data/.style 2 args={%
	      decoration={%
	         markings,
	         mark=at position #1 with \arrow{#2}},
	         postaction=decorate}
	      }%
	{\draw 
		plot coordinates {(-5,0)
			(0, 4)
			(5,0)};
	}
	
	\draw  (-2.5,0) -- (-2.5,2); 
	
	{\draw [fill=yellow!15]
		plot coordinates {(-5,0)
			(-2.5, 2)
			(2.5, 2)
			(5,0)};
	}

	\draw (-8,0) node[circle,fill,inner sep=1pt,label=below:$-R$]{}; 
	\draw (-5,0) node[circle,fill,inner sep=1pt,label=below:$-r-\frac{h}{c}$]{};
	\draw (-2.5,0) node[circle,fill,inner sep=1pt,label=below:$-r$]{};
	\draw (8,0) node[circle,fill,inner sep=1pt,label=below:$R$]{}; 
	\draw (5,0) node[circle,fill,inner sep=1pt,label=below:$r+\frac{h}{c}$]{};
	\draw (2.5,0) node[circle,fill,inner sep=1pt,label=below:$+r$]{};
	\draw (0,4) node[circle,fill,inner sep=1pt,label=right:$h+rc$]{};
	\draw (0,2) node[circle,fill,inner sep=1pt,label=above:$\,\,\,\,\,\,\,\,\,\,h$]{};

	\draw [<->] (0,5.5) node [left] {$x_d$} -- (0,-0.5) -- (0,0) -- (-9,0) -- (9,0) node [below] {$x_o$};

	\draw [black, arrow data={0.5}{stealth}] plot [smooth, tension=1] coordinates {(-1.4,0) (-1.7,0.4) (-1.5,1.0) (-1.9,1.6)};
	\draw (-1.9,1.6) node[circle,fill,inner sep=1pt,label=right:$\gamma(t)$]{};
	\draw (-1.4,0) node[circle,fill,inner sep=1pt,label=above right:$\gamma(0)$]{};
	\end{tikzpicture}
	\caption{The region depicted in yellow is the set $T$ defined in \eqref{eq:def_T} (in the case $d=2$). In the proof of Thm. \ref{t-linear-rigidity}, we show that $\eta(\Gamma_T) = 0$, i.e. the set of curves $\gamma$ such that $\bm \mu_\gamma(T)>0$ is $\eta$-negligible. From this we deduce that $\bm \mu(T) = 0$.}
	\label{fig:solution}
\end{figure}

\appendix

\section{Decomposition Theorem for \texorpdfstring{$\FV$}{FV} functions}\label{app} 

We begin with the following definition. 

\begin{definition}
	A function $f \in \FV(\R^d)$ is said to be \emph{monotone} if the sets $\{f>t\}$ and $\{f\le t\}$ are indecomposable for a.e. $t \in \R$. 
\end{definition}

Notice that, by Remark \ref{rmk:still_holds}, a function $f$ such that the superlevel sets $\{f> t\}$ are \emph{simple} for a.e. $t \in \R$ is necessarily monotone.

The goal of this appendix is to give a self-contained proof of the following theorem (see also \cite{BT}).

\begin{theorem}\label{thm:BT}
	For any $f \in \FV(\R^d)$ there exists an at most countable family $\{f_i\}_{i \in I} \subset \FV(\R^d)$ of monotone functions such that
	\begin{equation}\label{eq:tesi_BT}
	f = \sum_{i\in I} f_i \qquad\text{and}\qquad |Df| = \sum_{i\in I} |Df_i|.
	\end{equation}
	In particular,
	\begin{equation*}
	\|f\|_{\FV} = \sum_{i\in I} \|f_i\|_{\FV}.
	\end{equation*}
\end{theorem}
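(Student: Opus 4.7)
The plan is to exploit the Coarea Formula together with the $M$-connected component decomposition of level sets (Theorem~\ref{thm:ACMM} and Proposition~\ref{p-decomposition-th+}) to reorganize, as $t$ varies, the indecomposable pieces of the superlevel sets $\{f>t\}$ into a tree whose ``branches'' each give one monotone summand $f_k$.

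First I would reduce to the case $f\ge 0$ by decomposing $f=f^+-f^-$ (both summands lie in $\FV(\R^d)$). For $\L^1$-a.e.\ $t>0$ the set $E_t:=\{f>t\}$ has finite perimeter by Coarea, and Theorem~\ref{thm:ACMM} decomposes it into at-most-countably many $M$-connected components $\{E_t^i\}_{i\in I_t}$ with $P(E_t)=\sum_i P(E_t^i)$. For $s<t$ (both admissible), every $E_t^i$ is indecomposable and contained in $E_s$, hence by the uniqueness part of Theorem~\ref{thm:ACMM} is contained (mod $\L^d$) in a unique $M$-connected component of $E_s$; this defines a ``parent map'' $\pi_{s,t}\colon I_t\to I_s$ satisfying $\pi_{r,s}\circ\pi_{s,t}=\pi_{r,t}$ for $r<s<t$.

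The heart of the proof is to organize the inverse system $(I_t,\pi_{s,t})$ into an at-most-countable family of branches. Informally, a branch $\beta$ is a maximal chain along which no splitting occurs; to it I would associate a maximal interval $J_\beta\subseteq\R$ together with a nested family $(E_t^\beta)_{t\in J_\beta}$ of indecomposable sets satisfying $E_{t_2}^\beta\subseteq E_{t_1}^\beta$ whenever $t_1<t_2$. Countability of the branch set $\{\beta_k\}_{k\in I}$ would follow from $V(f)=\int_{\R}\sum_i P(E_t^i)\,dt<\infty$: each splitting event strictly increases the number of components, and only countably many branches can carry positive total perimeter. For each branch I would set
\[
f_k(x):=\int_{\R}\1_{E_t^{\beta_k}}(x)\,dt \qquad (\text{with }E_t^{\beta_k}:=\emptyset\text{ for }t\notin J_{\beta_k}),
\]
so that $\{f_k>s\}$ coincides a.e.\ with $E_s^{\beta_k}$, which is indecomposable by construction; a companion argument (using Proposition~\ref{prop:char_simple} and, where necessary, replacing $E_t^{\beta_k}$ by its saturation in the spirit of Lemma~\ref{lemma:unique_infinity}) would give the matching indecomposability of $\{f_k\le s\}$, hence monotonicity of $f_k$.

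Finally, the identities \eqref{eq:tesi_BT} would follow by two applications of Fubini/Coarea. The partition $E_t=\bigsqcup_i E_t^i$ combined with Fubini yields $\sum_k f_k(x)=\int_0^\infty\1_{E_t}(x)\,dt=f(x)$. The Coarea formula \eqref{eq:coarea_borel} together with the refined additivity
\[
\H^{d-1}(\d^e E_t\cap B)=\sum_i \H^{d-1}(\d^e E_t^i\cap B),
\]
obtained by applying Proposition~\ref{p-decomposition-th+} to arbitrary subunions of components, then gives $|Df|(B)=\sum_k|Df_k|(B)$ for every Borel $B$, and evaluating at $B=\R^d$ produces the norm identity. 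The main obstacle is the rigorous branch construction: the parameter $t$ ranges continuously while $M$-connected components are defined only up to $\L^d$-null sets, so making the equivalence relation measurable, proving countability of the branch set, and arranging \emph{both} $\{f_k>s\}$ and $\{f_k\le s\}$ to be indecomposable for a.e.\ $s$ all require delicate measure-theoretic bookkeeping — this is the technical core of the argument of \cite{BT}.
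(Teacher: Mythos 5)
Your plan correctly identifies the Coarea/ACMM machinery and the intuition that each ``branch'' of indecomposable components of the superlevel sets should yield one monotone summand, but there is a genuine gap exactly at the point you flag as the ``companion argument'' for the sublevel sets. A branch function $f_k(x)=\int_0^\infty \1_{E_t^{\beta_k}}(x)\,dt$ built from indecomposable components $E_t^{\beta_k}$ has indecomposable superlevel sets, but there is no reason for $\{f_k\le s\}$ to be indecomposable: a superlevel set with holes has a decomposable complement, so $f_k$ need not be monotone. Replacing $E_t^{\beta_k}$ by its saturation makes the superlevel sets simple (hence $f_k$ monotone by Proposition~\ref{prop:char_simple}), but then the saturated sets overlap across branches and $\sum_k f_k\ne f$. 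These two requirements — summing to $f$ and having simple superlevel sets — are genuinely in tension and cannot both be achieved by a one-pass, all-nonnegative branch decomposition.

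A concrete obstruction: let $f=\1_{B_2}+\1_{B_2\setminus B_1}\ge 0$, where $B_r$ denotes the ball of radius $r$. The superlevel sets are $B_2$ (for $0<t<1$) and the annulus $B_2\setminus B_1$ (for $1<t<2$), each already indecomposable, so your construction produces a single branch $f_1=f$. But $\{f>3/2\}=B_2\setminus B_1$ has decomposable complement $B_1\cup(\R^d\setminus B_2)$, so $f_1=f$ is not monotone. The correct decomposition here is $f=2\,\1_{B_2}-\1_{B_1}$, which necessarily contains a \emph{negative} monotone summand even though $f\ge 0$; your initial reduction to $f\ge 0$ with all $f_k\ge 0$ cannot produce it. This is precisely why the paper does \emph{not} build the branches simultaneously: Lemma~\ref{lemma:extraction} extracts one branch $g$ with indecomposable superlevel sets, Lemma~\ref{lemma:extraction_plus} then \emph{replaces} it by its saturated version $h\ge g$ and shows $\|f\|_{\FV}=\|f-h\|_{\FV}+\|h\|_{\FV}$; the remainder $f-h$ may change sign, and Lemma~\ref{lemma:extraction_plus_plus} then peels off a signed monotone summand from a general signed function. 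The full decomposition is obtained by iterating this (the paper uses transfinite recursion and the bound $\sum\|g_\xi\|_{\FV}\le\|f\|_{\FV}$ from~\eqref{e-transfin-2} to conclude countability), not by a global tree/branch argument. Your Coarea/Fubini computation for the norm identity at the end is fine once a decomposition with the right properties exists, but the construction step — the ``measure-theoretic bookkeeping'' you defer — is not merely technical; as stated, the output functions would fail monotonicity.
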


\begin{remark}
	Observe that from the embeddings of $\FV$ (see Thm. \ref{thm:347AFP}) the first series in \eqref{eq:tesi_BT} converges also in $L^{1*}(\R^d)$ but, in general, we cannot improve this to convergence in $L^1(\R^d)$. Secondly, we remark that the decomposition provided in Theorem \ref{thm:BT} is not unique: we refer the reader to the counterexample presented in the paper \cite{BT}. 
\end{remark}

The proof of Theorem \ref{thm:BT} will be presented at the end of the appendix and it requires some preliminary lemmas. 

\begin{lemma}\label{lemma:coarea1}
	Let $\varphi,\psi \in \FV(\R^d)$ and assume $0\le \psi \le \varphi$. 
	\begin{enumerate}
		\item If for a.e. $t \in \R$ it holds 
	\begin{equation}\label{eq:assum}
	P(\{\varphi >t \}) = P( \{ \varphi>t \} \setminus \{\psi >t\}) + P ( \{ \psi>t \} )
	\end{equation}	
	then 
	\begin{equation*}
	\| \varphi \|_{\FV} = \| \varphi-\psi \|_{\FV} + \|\psi \|_{\FV}. 
	\end{equation*}
			\item If for a.e. $t \in \R$ it holds 
			
	\begin{equation}\label{eq:assum_prime}
	P(\{\psi >t \}) = P( \{ \varphi >t \} \setminus \{ \psi>t\}) + P ( \{ \varphi>t \} )
	\end{equation}	
	then 
	\begin{equation*}
	\| \psi \|_{\FV} = \| \varphi \|_{\FV} + \|\varphi-\psi \|_{\FV}. 
	\end{equation*}
\end{enumerate}
\end{lemma}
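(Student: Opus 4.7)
The plan is to deduce both parts from the coarea formula applied to $\varphi$ and $\psi$ separately, combined with a layer-cake upper bound on $V(\varphi - \psi)$ and the triangle inequality for the variation. Throughout, set $A_t := \{\varphi > t\} \setminus \{\psi > t\}$; since $0 \le \psi \le \varphi$ we have $\{\psi > t\} \subseteq \{\varphi > t\}$, so the pointwise identity $\1_{A_t} = \1_{\{\varphi > t\}} - \1_{\{\psi > t\}}$ together with the layer-cake representations of $\varphi$ and $\psi$ yields
\[
\varphi(x) - \psi(x) = \int_0^\infty \1_{A_t}(x)\, dt \qquad \text{for a.e. } x \in \R^d.
\]

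First I would establish the Minkowski-type bound $V(\varphi - \psi) \le \int_0^\infty P(A_t)\, dt$. For any $\xi \in C_c^\infty(\R^d;\R^d)$ with $\Vert \xi \Vert_\infty \le 1$, the displayed identity and Fubini's theorem give
\[
-\int_{\R^d} (\varphi - \psi)\, \dive \xi \, dx = -\int_0^\infty \int_{\R^d} \1_{A_t}\, \dive \xi \, dx\, dt \le \int_0^\infty P(A_t)\, dt,
\]
and taking the supremum over $\xi$ yields the claimed bound. In parallel I would extend the coarea formula from $\BV(\R^d)$ to $\FV(\R^d)$ by applying it on each ball $B_R$ and letting $R \to \infty$ via monotone convergence of Radon measures, obtaining
\[
V(\varphi) = \int_0^\infty P(\{\varphi > t\})\, dt, \qquad V(\psi) = \int_0^\infty P(\{\psi > t\})\, dt.
\]

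For part~(1), integrating the hypothesis \eqref{eq:assum} in $t$ and inserting the two coarea identities gives
\[
V(\varphi) = \int_0^\infty P(A_t)\, dt + V(\psi) \ge V(\varphi - \psi) + V(\psi),
\]
while the reverse inequality is the triangle inequality for $V$ applied to $\varphi = (\varphi - \psi) + \psi$. For part~(2) the same scheme with \eqref{eq:assum_prime} in place of \eqref{eq:assum} produces $V(\psi) \ge V(\varphi - \psi) + V(\varphi)$, and the triangle inequality applied to $\psi = \varphi - (\varphi - \psi)$ closes the argument.

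The main obstacle is the Minkowski-type bound of the first step, together with justifying coarea on $\FV$ rather than $\BV$. Both are handled using the fact that $\varphi, \psi \in L^{1^*}(\R^d)$, so Chebyshev plus the (local) coarea formula ensure that $A_t$ has finite Lebesgue measure and finite perimeter for a.e. $t > 0$ and that all integrals in sight are finite, whence the Fubini interchange and the limiting arguments are legitimate.
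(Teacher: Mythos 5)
Your proposal is correct and follows essentially the same route as the paper's proof: layer-cake representation of $\varphi-\psi$, Fubini to bound $V(\varphi-\psi)$ by $\int_0^\infty P(\{\varphi>t\}\setminus\{\psi>t\})\,dt$, the coarea formula, and the triangle inequality for the reverse bound. The only cosmetic difference is that you isolate the Minkowski-type bound as a standalone step and make explicit the $\BV\to\FV$ extension of coarea, whereas the paper runs the layer-cake/Fubini computation inline; the mathematical content is identical.
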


\begin{proof} We present the proof of the two claims. 
	\begin{enumerate}
		\item Concerning the first point, it suffices to show
	\begin{equation*}
	\| D\varphi \|_{\mathscr M} \ge \| D(\varphi-\psi) \|_{\mathscr M} + \|D\psi \|_{\mathscr M},
	\end{equation*} 
	because the other inequality is trivial by triangle inequality. Using the layer cake representation and Fubini's Theorem we get 
	\begin{equation*}
	\begin{split}
	\| D(\varphi-\psi) \|_{\mathscr M} & = \sup_{\Vert \omega \Vert_{\infty} \le 1} \int_{\R^d} (\varphi(x)-\psi(x)) \cdot \dive \omega(x) \, dx \\
	& = \sup_{\Vert \omega \Vert_{\infty} \le 1} \int_{\R^d} \int_0^\infty \left( \1_{ \{\varphi>t\} }(x) - \1_{ \{\psi >t\} }(x) \right)\cdot \dive \omega(x) \, dt \,  dx \\	
	& = \sup_{\Vert \omega \Vert_{\infty} \le 1} \int_{\R^d} \int_0^\infty \1_{ \{\varphi>t\} \setminus \{\psi >t\} }(x) \cdot \dive \omega(x) \, dt \,  dx \\	
	& = \sup_{\Vert \omega \Vert_{\infty} \le 1}  \int_0^\infty \int_{\R^d} \1_{ \{\varphi>t\} \setminus \{\psi >t\} }(x) \cdot \dive \omega(x) \, dx \,dt  \\
	& \le \sup_{\Vert \omega \Vert_{\infty} \le 1}  \int_0^\infty \langle D  \1_{ \{\varphi>t\} \setminus \{\psi >t\} }, \omega \rangle \, dt \\
	& \le \int_0^\infty P( \{\varphi>t\} \setminus \{\psi >t\} ) \, dt \\
	& \overset{\eqref{eq:assum}}{ = }   \int_0^\infty P( \{\varphi>t\}) - P( \{\psi >t\} ) \, dt. \\
	\end{split}
	\end{equation*}
	Applying again Coarea formula we obtain the conclusion. 
	\item The proof of the second claim is similar to the proof of the first one. Notice that $|Dw| =|D(-w)|$ as measures for any $w \in \FV(\R^d)$ hence 
	\begin{equation*}
	\|D\psi\|_{\mathscr M} \le  \|D\varphi\|_{\mathscr M}  + \|D(\varphi-\psi)\|_{\mathscr M}, 
	\end{equation*}
	which is equivalent to 
	\begin{equation*}
	\|D(\varphi-\psi)\|_{\mathscr M} \ge \|D\psi\|_{\mathscr M} - \|D\varphi\|_{\mathscr M}. 
	\end{equation*}
	It thus remains to show 
	\begin{equation*}
	\|D(\varphi-\psi)\|_{\mathscr M} \le \|D\psi \|_{\mathscr M}  - \|D\varphi\|_{\mathscr M}. 
	\end{equation*}
	By layer cake representation and Fubini, as in Point (1), we have 
	\begin{equation*}
	\begin{split}
	\| D(\varphi -\psi) \|_{\mathscr M} & = \sup_{\Vert \omega \Vert_{\infty} \le 1} \int_{\R^d} (\varphi(x)-\psi(x)) \cdot \dive \omega(x) \, dx \\
	& = \sup_{\Vert \omega \Vert_{\infty} \le 1} \int_{\R^d} \int_0^\infty \left( \1_{ \{\varphi>t\} }(x) - \1_{ \{\psi >t\} }(x) \right)\cdot \dive \omega(x) \, dt \,  dx \\	
	& = \sup_{\Vert \omega \Vert_{\infty} \le 1} \int_{\R^d} \int_0^\infty \1_{ \{\varphi>t\} \setminus \{\psi >t\} }(x) \cdot \dive \omega(x) \, dt \,  dx \\	
	& = \sup_{\Vert \omega \Vert_{\infty} \le 1}  \int_0^\infty \int_{\R^d} \1_{ \{\varphi>t\} \setminus \{\psi>t\} }(x) \cdot \dive \omega(x) \, dx \,dt  \\
	& \le \sup_{\Vert \omega \Vert_{\infty} \le 1}  \int_0^\infty \langle D  \1_{ \{\varphi>t\} \setminus \{\psi >t\} }, \omega \rangle \, dt \\
	& \le \int_0^\infty P( \{\varphi>t\} \setminus \{\psi>t\} ) \, dt \\
	& \overset{\eqref{eq:assum_prime}}{ = }   \int_0^\infty P( \{\psi>t\}) - P( \{\varphi >t\} ) \, dt. \\
	\end{split}
	\end{equation*}
	Again the application of Coarea Formula yields the desired conclusion.
\end{enumerate}
\end{proof}

\begin{lemma}[From superlevel sets to function]\label{lemma:construction_function_from_superlevels} 	
Let $I \subset [0,+\infty)$ be an interval and let $(A_t)_{t \in I}$ be a family of sets such that $t,s \in I$ with $s<t$ implies $A_t \subset A_s$. Then there exists a measurable function $w \colon \R^d \to [0,+\infty)$ such that $\{w>t\} = A_t$ (up to Lebesgue negligible subsets) for a.e. $t \in I$.
\end{lemma}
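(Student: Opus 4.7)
The plan is to set $w$ to the natural pointwise formula and then verify both measurability and the superlevel-set identity. Specifically, I will define
\begin{equation*}
w(x) := \sup\{t \in I : x \in A_t\},
\end{equation*}
with the convention $\sup \emptyset := \inf I$. On the (possibly empty) set $\bigcap_{t\in I} A_t$, where this supremum would be $+\infty$, I modify $w$ to take any finite value; as will become clear, such a modification does not affect the required identity $\{w>t\}=A_t$ up to null sets.

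The first step is measurability. Directly from the definition of $w$, combined with monotonicity of $(A_s)$, one has
\begin{equation*}
\{w > t\} \;=\; \bigcup_{s \in I,\, s > t} A_s \qquad \text{for every } t \in I.
\end{equation*}
A priori this is an uncountable union, but picking any countable $D \subset I \cap (t, +\infty)$ with $\inf D = t$ and using once more that $s \mapsto A_s$ is non-increasing shows $\bigcup_{s \in I,\, s > t} A_s = \bigcup_{s \in D} A_s$, which is measurable. Hence $w$ is Borel measurable.

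The core step is to establish $\{w > t\} = A_t$ modulo Lebesgue null sets for a.e. $t \in I$. The inclusion $\{w>t\}\subset A_t$ holds for every $t\in I$ by monotonicity, so the work is to control the deficit $A_t \setminus \bigcup_{s>t,\,s\in I} A_s$. This is the step where I expect the main (mild) difficulty, and my plan is to reduce it to the right-continuity of scalar monotone functions. Concretely, for each $n \in \N$ set $\phi_n(t) := \L^d(A_t \cap B_n)$, with $B_n$ the ball of radius $n$ centred at the origin. Each $\phi_n$ is non-increasing in $t$, hence right-continuous outside a countable set $N_n \subset I$. Letting $N := \bigcup_n N_n$ (still countable), continuity of $\L^d$ from below yields, for every $t \in I \setminus N$ and every $n$,
\begin{equation*}
\L^d(A_t \cap B_n) \;=\; \lim_{s \downarrow t} \L^d(A_s \cap B_n) \;=\; \L^d\Bigl( \bigcup_{s > t,\, s \in I} A_s \cap B_n \Bigr).
\end{equation*}
Since $\bigcup_n B_n = \R^d$, letting $n \to \infty$ gives $\L^d\bigl(A_t \setminus \bigcup_{s > t,\, s \in I} A_s\bigr) = 0$ for every $t \in I \setminus N$, i.e. for all but countably many $t \in I$, which is the desired conclusion.
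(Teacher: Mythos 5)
Your proof is correct and shares the paper's core mechanism: define $w$ so that its superlevel sets are unions $\bigcup_{s>t}A_s$, and use the fact that a monotone function has at most countably many discontinuities to show that the deficit $A_t\setminus\bigcup_{s>t}A_s$ is Lebesgue null for a.e.\ $t\in I$. The technical execution differs in two respects. First, the paper fixes a countable dense $Q\subset I$ consisting of continuity points of $h(t):=\L^d(A_t)$ and sets $w(x):=\sup_{t\in Q}\bigl(t\,\1_{A_t}(x)\bigr)$, so $w$ is automatically measurable as a countable supremum of measurable functions; you instead take the supremum over all of $I$ and verify measurability separately by reducing the a priori uncountable union $\bigcup_{s>t}A_s$ to a countable one. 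Second, and more substantively, you localize to the balls $B_n$ and work with the finite-valued functions $\phi_n(t)=\L^d(A_t\cap B_n)$ rather than directly with $h(t)=\L^d(A_t)$; this cleanly handles the possibility $\L^d(A_t)=+\infty$, which the paper's continuity-of-$h$ argument does not explicitly address (though in the paper's applications the $A_t$ always have finite measure, so both proofs apply). A small caveat: your assertion that modifying $w$ on $\bigcap_{t\in I}A_t$ ``does not affect the required identity'' is never actually verified; if $I$ is unbounded and $\bigcap_{t}A_t$ has positive measure, the conclusion of the lemma fails for \emph{any} finite-valued $w$ (and the paper's $w$ likewise takes the value $+\infty$ there), so this is really a defect of the statement as written rather than of your argument, but the promissory note should not be left hanging.
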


\begin{proof} Due to monotonicity of the family $(A_t)_{t \in I}$, the function $h(t) := \meas{A_t}$ is non-increasing on $I$. Therefore there exists a Lebesgue negligible set $N \subset I$ such that
$h$ is continuous at every $t\in I \setminus N$. Let $Q \subseteq I \setminus N$ be a countable set, which is dense in $I$. For any $x\in \R^d$ we define
\begin{equation*}
w(x) := \sup_{t \in Q} \bigl( t\cdot  \1_{A_t}(x)\bigr).
\end{equation*}
Clearly $w$ is Lebesgue measurable. By definition of $w$ for any $s\in I \setminus N$
\begin{equation*}
\{w > s\} = \bigcup_{t\in Q \cap (s,+\infty) \cap I } A_t
\end{equation*}
Since for any $s<t$ it holds $\meas{A_t \setminus A_s} = 0$ and $Q$ is countable, it follows that
\begin{equation*}
\meas{\Biggl(\bigcup_{t\in Q \cap (s,+\infty) \cap I } A_t\Biggr) \setminus A_s} = 0.
\end{equation*}
On the other hand, let $\eps:=\meas{A_s \setminus \bigcup_{t\in Q \cap (s,+\infty) \cap I} A_t}$.
For any $t\in Q \cap (s,+\infty) \cap I $ we have $A_t \subset A_s$, hence $\bigcup_{t\in Q \cap (s,+\infty) \cap I} A_t \subset A_s$. In particular, we can estimate 
\begin{equation*}
\meas{A_s} = \meas{ A_s \setminus \bigcup_{t\in Q \cap (s,+\infty) \cap I} A_t} + \meas{ \bigcup_{t\in Q \cap (s,+\infty) \cap I} A_t} \ge \e + \meas{A_t}. 
\end{equation*}
Since $Q$ is dense in $(s,+\infty) \cap I $ and $h$ is continuous at $s$ the only possible case is $\eps=0$. We have thus proved that $\meas{\{g>s\} \mathbin{\triangle} A_s}=0$ for a.e. $s \in I$ and this concludes the proof. 
\end{proof}

The following lemma is a building block of the proof of Theorem~\ref{thm:BT}.
It allows to ``extract'' from a non-negative $\FV$ function (whose superlevel sets in general are not indecomposable) a non-trivial function with indecomposable superlevel sets:

\begin{lemma}[Extraction lemma I] \label{lemma:extraction} Let $f \in \FV(\R^d)$ and assume $f$ is not identically zero and non-negative. Then there exists $g \in \FV(\R^d)$ with $0\le g \le f$ and $g \not\equiv 0$ such that: 
	\begin{enumerate}
		\item[(i)] for a.e. $t \ge 0$ the set $\{g > t\}$ is indecomposable; 
		\item[(ii)] it holds $\Vert f \Vert_{\FV} =\Vert f -g \Vert_{\FV} + \Vert g \Vert_{\FV}$. 
	\end{enumerate}
\end{lemma}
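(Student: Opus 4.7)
The plan is to build $g$ by selecting, for each level $t$, one M-connected component of $\{f>t\}$ in such a way that the selected components form a monotone decreasing family in $t$; this family will then serve as the superlevel sets of $g$ via Lemma~\ref{lemma:construction_function_from_superlevels}. First I pick a ``seed''. Since $f \geq 0$ is not identically zero, $\L^d(\{f > 0\}) > 0$, so by countable subadditivity there exists $t_0 > 0$ with $\L^d(E_{t_0}) > 0$, where $E_t := \{f > t\}$; by the Coarea Formula I may further arrange $P(E_{t_0}) < \infty$. Theorem~\ref{thm:ACMM} then supplies an M-connected component $A_0 \in \mathcal{CC}^M(E_{t_0})$ with $\L^d(A_0) > 0$.

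Next I build the chain. For each $t \in [0, t_0)$ with $P(E_t) < \infty$, the set $A_0$ is an indecomposable subset of $E_t$ of positive measure, so the uniqueness part of Theorem~\ref{thm:ACMM} furnishes a unique $C_t \in \mathcal{CC}^M(E_t)$ with $\L^d(A_0 \setminus C_t) = 0$. For $t \geq t_0$ I set $C_t := \emptyset$. The key monotonicity claim is that $s < t$ implies $C_t \subseteq C_s$ modulo $\L^d$: indeed $C_t$ is itself an indecomposable set of positive measure contained in $E_t \subseteq E_s$, so by uniqueness $C_t$ sits inside a single component of $E_s$, and since $A_0 \subseteq C_t \cap C_s$ that component must be $C_s$. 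On the $\L^1$-negligible set of $t$ where $P(E_t) = \infty$ I redefine $C_t$ by monotone extension so that the family $(C_t)_{t \geq 0}$ is genuinely decreasing; this does not affect anything a.e.

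Feeding this monotone family into Lemma~\ref{lemma:construction_function_from_superlevels} produces a measurable $g \colon \R^d \to [0, \infty)$ with $\{g > t\} = C_t$ up to $\L^d$-null sets for a.e. $t \geq 0$, which gives (i). The inclusion $\{g > t\} \subseteq \{f > t\}$ yields $g \leq f$, hence $g \in L^{1^*}(\R^d)$, and the Coarea Formula gives $V(g) = \int_0^{t_0} P(C_t)\,dt \leq \int_0^{t_0} P(E_t)\,dt \leq V(f) < \infty$, so $g \in \FV(\R^d)$. Nontriviality holds because $A_0 \subseteq C_t$ for every $t \in [0, t_0)$, whence $g = t_0$ on $A_0$ and $\L^d(A_0) > 0$. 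Finally, for a.e. $t \in [0, t_0)$ the set $C_t$ is one M-connected component of $E_t$, so Proposition~\ref{p-decomposition-th+} (applied with $I_1$ the singleton index of $C_t$ and $I_2$ its complement) gives $P(E_t) = P(C_t) + P(E_t \setminus C_t)$; for $t \geq t_0$ the identity is trivial. Lemma~\ref{lemma:coarea1}(1), applied with $\varphi = f$ and $\psi = g$, now yields (ii).

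The principal difficulty I anticipate is ensuring the monotonicity of the chosen family $(C_t)$ genuinely (not merely a.e.) so that Lemma~\ref{lemma:construction_function_from_superlevels} applies cleanly; the decisive ingredient is the uniqueness clause of Theorem~\ref{thm:ACMM}, together with the observation that the shared ``seed'' $A_0$ forces consecutive selections to nest. Once that is in place, the remaining verifications reduce to direct applications of the Coarea Formula, Proposition~\ref{p-decomposition-th+}, and Lemma~\ref{lemma:coarea1}.
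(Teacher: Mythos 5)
Your proposal is correct and follows essentially the same route as the paper's own proof: fix a seed level $t_0$ (the paper's $a$) and a component $A_0$ (the paper's $R$), propagate it downwards in $t$ via the uniqueness clause of Theorem~\ref{thm:ACMM} to obtain a nested family of components $C_t$ (the paper's $R_t = E_t^{j(t)}$), convert this family into a function $g$ via Lemma~\ref{lemma:construction_function_from_superlevels}, and then verify (ii) via Proposition~\ref{p-decomposition-th+} and Lemma~\ref{lemma:coarea1}. The one genuine (and pleasant) simplification is at the end: you set $C_t := \emptyset$ for $t \ge t_0$ and apply Lemma~\ref{lemma:coarea1}(1) directly with $\varphi = f$, $\psi = g$, noting that the hypothesis \eqref{eq:assum} holds trivially for $t \ge t_0$; the paper instead first truncates $f$ to $\bar f = \min(a, f)$, applies Lemma~\ref{lemma:coarea1} to the pair $(\bar f, g)$, and recovers (ii) for $f$ by an extra triangle-inequality step. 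Your shortcut is valid and avoids that detour. One small imprecision: the phrase ``redefine $C_t$ by monotone extension'' only patches the null set $N$ where $P(E_t) = \infty$; it does not by itself upgrade the inclusions $C_t \subseteq C_s$ from ``modulo $\L^d$'' to the genuine set inclusion that the statement of Lemma~\ref{lemma:construction_function_from_superlevels} formally requires. This is the same (easily repaired, e.g.\ by choosing Borel representatives or by observing that the proof of that lemma only really uses $\meas{A_t \setminus A_s} = 0$) gap that the paper's own argument leaves implicit, so it does not count against you.
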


\begin{proof}
For any $t\ge 0$ let $E_t := \{f > t\}$.
Since $f\in \FV(\R^d)$, there exists a Lebesgue negligible set $N \subseteq (0,+\infty)$ such that for any $t \in (0,+\infty) \setminus N$ the set $E_t$ has finite perimeter.
Let $E_t^{k}$ denote the $k$-th $M$-connected component of $E_t$, 
$t \in (0,+\infty) \setminus N$.

Fix some $a>0$ such that $\meas{E_a}>0$.
Let $R$ be some $M$-connected component of $E_a$.
For any $t\in(0,a) \setminus N$ we have $E_t \supseteq E_a \supseteq R$,
and $R$ is indecomposable, hence by Theorem~\ref{thm:ACMM} there exists a unique $j=j(t)$ such that $\meas{R \setminus E_t^{j(t)}} = 0$.

\begin{figure}[!ht]
	\centering
	\begin{tikzpicture}[scale=1, font=\footnotesize]

	{\draw [ultra thin, black, fill=yellow!15]
		plot coordinates {(1.1,1)
			(3,1)
			(3.83, 3.5)
			(4.25, 3.5)
			(5,2)
			(6,2)
			(7,0)
			(1,0)
		};
	}
	
	{\draw[ultra thin,yellow!90]
		plot coordinates {(1.1,1)
			(3,1)
			(3.83, 3.5)
			(4.25, 3.5)
			(5,2)
			(6,2)
			(7,0)
			(1,0)
		};
	}

	\draw [very thick, <->] (0,5.5) node [left] {$y$} -- (0,-0.5) -- (0,0) -- (-0.5,0) -- (10.5,0) node [below] {$x$};
	{\draw[thick, black]
		plot coordinates {(1,0)
			(1.5, 5)
			(3,1)
			(4,4)
			(5,2)
			(5.5,3)
			(7,0)
		};
	}
	
	{\draw[thick,black]
		plot coordinates {(8,0)
			(8.5,4)
			(9,0.5)
			(9.5,3)
			(10,0)
		};
	}

	\draw[loosely dashed, very thin, gray!50]  (-0.5,3.5) -- (10,3.5); 
	\draw[loosely dashed, very thin, gray!50]  (-0.5,2.5) -- (10,2.5);
	\draw[loosely dashed, very thin, gray!50]  (-0.5,1.5) -- (10,1.5);
	
	\draw[red, ultra thick] (3.83, 3.5) -- (4.25, 3.5);
	\draw[blue, ultra thick] (3.5, 2.5) -- (4.75, 2.5);
	\draw[green, ultra thick] (3.16667, 1.5) -- (6.25, 1.5);
	
	\draw[red, thick] (8.45, 3.5) -- (8.56, 3.5);
	\draw[red, thick] (1.35555556, 3.5) -- (2.05, 3.5);
	
	\draw (-0.5, 3.5) node [left] {$a$};
	\draw (-0.5, 2.5) node [left] {$t$};
	\draw (-0.5, 1.5) node [left] {$s$};
	
	\draw (4.05, 3.5) node [below] { \textcolor{red}{$R$} };
	
	\draw (4.15, 2.5) node [below] { \textcolor{blue}{$R_t$} };
	\draw (4.65, 1.5) node [below] { \textcolor{green}{$R_s$} };
	
	\end{tikzpicture}
	\caption{Situation described in the proof of Lemma \ref{lemma:extraction}. The black curve represents the graph of a generic function $f \in \FV(\R^d)$. The red segments make up the level set $E_a$. The red, thick segment is the component $R$ and the blue and green ones are respectively $R_t$ and $R_s$. The area depicted in yellow is the subgraph of the function $g$, whose superlevel sets are indecomposable.}
	\label{fig:proof}
\end{figure}
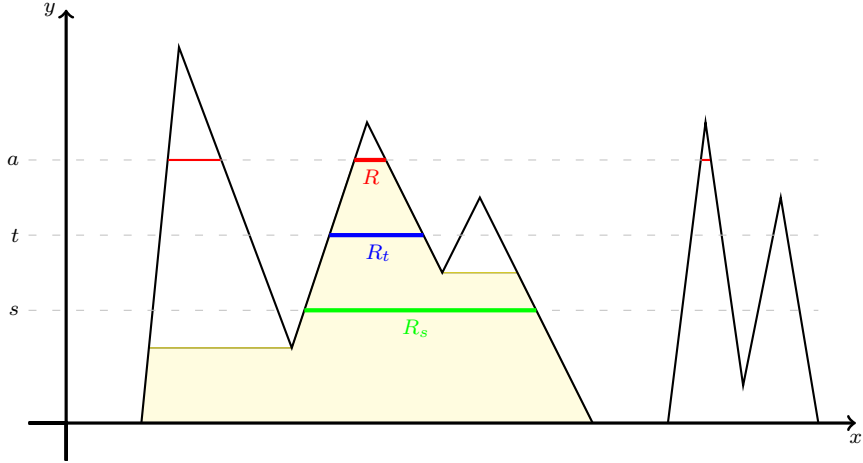

Let $R_t := E_t^{j(t)}$, $t\in(0,a) \setminus N$.
Note that for any $s,t \in (0,a) \setminus N$ with $s<t$ it holds that 
\begin{equation}\label{e-R_t-subset-of-R_s}
\meas{R_t \setminus R_s}=0.
\end{equation}
Indeed, $E_s \supseteq E_t \supseteq R_t$ and $R_t$ is indecomposable, hence again by Theorem~\ref{thm:ACMM} there exists a unique $k$ such that $\meas{R_t \setminus E_s^{k}} = 0$.
But $\meas{R \setminus R_t} = 0$, hence 
\begin{equation*}
\begin{aligned}
E_s^k \setminus R &= (E_s^k \cap R_t \cap R^c) \cup (E_s^k \cap R_t^c \cap R^c) \\
&\subseteq (R_t \setminus R) \cup (E_s^k \setminus R_t)
\end{aligned}
\end{equation*}
is Lebesgue negligible. Therefore $k=j(s)$ by uniqueness of $j(s)$. Applying now Lemma \ref{lemma:construction_function_from_superlevels}, we can construct a function $g\colon \R^d \to [0,a]$ such that $\{g>s\} = R_s$ (up to Lebesgue negligible subsets) for a.e. $s \in (0,a)$.

Observe that $\|f\|_{\FV} = \|\bar f\|_{\FV} + \|\hat f\|_{\FV}$, where $\bar f(x) = \min(a, f(x))$
and $\hat f := f - \bar f$.
For a.e. $t\in (0,a)$ we have
\begin{equation*}
\{\bar f>t\} = \{f>t\} = E_t^{j(t)}  \cup \bigcup_{k\ne j(t)} E_t^{k}
\end{equation*}
hence by construction of $g$ and Proposition~\ref{p-decomposition-th+}
\begin{equation*}
P(\{\bar f>t\}) = P ( \{ g>t \} ) + P( \{ f>t \} \setminus \{g >t\}). 
\end{equation*}
Hence by Lemma~\ref{lemma:coarea1} 
we have
\begin{equation}
\|\bar f\|_{\FV} = \|g\|_{\FV} + \|\bar f - g\|_{\FV}.
\end{equation}
Then by the triangle inequality 
\begin{equation*}
\|f\|_{\FV} = \|\bar f\|_{\FV}+ \|\hat f\|_{\FV} = \|g\|_{\FV} + \|\bar f - g\|_{\FV} + \|\hat f\|_{\FV} \ge \|g\|_{\FV}+ \|\bar f - g + \hat f\|_{\FV}
\end{equation*}
and $\|f\|_{\FV} = \|g + \bar f - g + \hat f\|_{\FV} \le \|g\|_{\FV} + \|\bar f - g + \hat f\|_{\FV}$,
hence the property \textit{(ii)} follows.
\end{proof}

\begin{lemma}[Extraction lemma II] \label{lemma:extraction_plus} Let $f \in \FV(\R^d)$ and assume $f$ is not identically zero and non-negative. Then there exists $h \in \FV(\R^d)$ with $h \not\equiv 0$ such that: 
	\begin{enumerate}
		\item[(i)] for a.e. $t \ge 0$ the set $\{h > t\}$ is simple;
		\item[(ii)] it holds $\Vert f \Vert_{\FV} =\Vert f -h \Vert_{\FV} + \Vert h \Vert_{\FV}$. 
	\end{enumerate}
\end{lemma}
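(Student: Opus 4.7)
The plan is to saturate the function produced by Lemma \ref{lemma:extraction}. Apply Lemma \ref{lemma:extraction} to $f$ to obtain $g \in \FV(\R^d)$ with $0 \le g \le f$, $g \le a$ for some $a>0$, indecomposable nested superlevel sets $R_t := \{g > t\}$, and $\|f\|_{\FV} = \|g\|_{\FV} + \|f - g\|_{\FV}$. The case $d=1$ is trivial, since indecomposable subsets of $\R$ are intervals (hence simple), so one can take $h := g$. Assume $d \geq 2$; for a.e.\ $t \in (0,a)$ set $S_t := \mathop{\mathrm{sat}}(R_t)$, which is simple by Definition \ref{def:simple}. Lemma \ref{lemma:unique_infinity} provides a unique $M$-connected component $U_0(t)$ of $R_t^c$ of infinite measure, and $S_t = \R^d \setminus U_0(t)$. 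For $s < t$, $R_s^c \subseteq R_t^c$ forces $U_0(s) \subseteq U_0(t)$ (the infinite component $U_0(s)$ must lie in the unique infinite component of $R_t^c$), and hence $S_t \subseteq S_s$. Lemma \ref{lemma:construction_function_from_superlevels} then produces a measurable $h\colon \R^d \to [0,a]$ with $\{h > t\} = S_t$ for a.e.\ $t$, so (i) holds immediately; the bound $\|h\|_{\FV} = \int_0^\infty P(S_t)\,dt \le \int_0^\infty P(R_t)\,dt = \|g\|_{\FV}$ (established below) together with the integrability of superlevels gives $h \in \FV(\R^d)$.

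The verification of (ii) rests on a boundary analysis. Applying Proposition \ref{p-decomposition-th+} to the $M$-decomposition of $R_t^c$ into $U_0(t)$ and the holes (whose union is $S_t \setminus R_t$), one obtains $P(R_t) = P(S_t) + P(S_t \setminus R_t)$; since reduced boundaries of disjoint $M$-components are essentially disjoint, this refines to the decomposition $\mathcal F R_t = \mathcal F S_t \sqcup \mathcal F(S_t \setminus R_t)$ modulo $\H^{d-1}$. Similarly, because $R_t$ is the chosen $M$-component of $E_t := \{f > t\}$ in Lemma \ref{lemma:extraction}, one has $\mathcal F R_t \subseteq \mathcal F E_t$ modulo $\H^{d-1}$. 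Consequently $\mathcal F S_t \subseteq \mathcal F E_t$, and for $\H^{d-1}$-a.e.\ $x \in \mathcal F S_t$ the densities satisfy $D(S_t,x) = D(R_t,x) = D(E_t,x) = \tfrac{1}{2}$, forcing $D(S_t \setminus R_t, x) = D(E_t \setminus R_t, x) = 0$ and hence the alignment of inner normals $\nu_{S_t}(x) = \nu_{R_t}(x) = \nu_{E_t}(x)$.

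To conclude, combine the vector-valued Coarea identities $Df = \int_0^\infty D\1_{E_t}\,dt$ and $Dh = \int_0^\infty D\1_{S_t}\,dt$, obtained by applying Fubini's Theorem to $-\int f\dive \Phi\,dx = -\int_0^\infty \int \1_{E_t} \dive \Phi \, dx \, dt$ (and similarly for $h$) for test vector fields $\Phi \in C_c^\infty(\R^d;\R^d)$. The normal alignment then gives, for a.e.\ $t$,
\[
D\1_{E_t} - D\1_{S_t} = \nu_{E_t} \, \H^{d-1} \rest (\mathcal F E_t \setminus \mathcal F S_t), \qquad |D\1_{E_t} - D\1_{S_t}|(\R^d) = P(E_t) - P(S_t).
\]
Integrating and using the triangle inequality for integrals of vector measures together with the scalar Coarea Formula,
\[
|D(f-h)|(\R^d) \leq \int_0^\infty \bigl(P(E_t) - P(S_t)\bigr)\,dt = \|f\|_{\FV} - \|h\|_{\FV},
\]
and combined with the subadditivity $\|f\|_{\FV} \leq \|f-h\|_{\FV} + \|h\|_{\FV}$ this yields (ii). The principal obstacle is the boundary analysis in the middle step: since $h$ can exceed $f$ inside a hole of $R_t$ on which $f$ vanishes, Lemma \ref{lemma:coarea1} is not applicable, and the absence of cancellation in $|D(f-h)|$ must be extracted directly from the pointwise equality of the inner normals on $\mathcal F S_t$, itself a consequence of the two inclusions $\mathcal F S_t \subseteq \mathcal F R_t \subseteq \mathcal F E_t$ modulo $\H^{d-1}$.
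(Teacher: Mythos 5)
Your construction of $h$ (saturate the superlevel sets of the function $g$ produced by Lemma \ref{lemma:extraction} and invoke Lemma \ref{lemma:construction_function_from_superlevels}) is exactly the paper's route, and your direct proof of the monotonicity $S_t\subseteq S_s$ via Lemma \ref{lemma:unique_infinity} is a perfectly good substitute for the paper's citation of \cite{ACMM}. The gap is in your verification of (ii): the inclusion $\mathcal F R_t\subseteq\mathcal F E_t$ (with matching normals), which you justify ``because $R_t$ is the chosen $M$-component of $E_t$ in Lemma \ref{lemma:extraction}'', is not part of the statement of that lemma and cannot be deduced from it (nor can the bound $g\le a$ you mention, though that one is harmless). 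A function $g$ satisfying \emph{all} the stated conclusions of Lemma \ref{lemma:extraction} need not have superlevels that are $M$-components of those of $f$: take $f=\1_{B_2}+\1_{B_1}$ for concentric balls $B_1\subset B_2$ and $g=\1_{B_1}$. Then $0\le g\le f$, each $\{g>t\}$ is indecomposable, and $\Vert f\Vert_{\FV}=P(B_2)+P(B_1)=\Vert f-g\Vert_{\FV}+\Vert g\Vert_{\FV}$, yet for $t\in(0,1)$ one has $E_t=B_2$, $S_t=R_t=B_1$, $\mathcal F S_t\cap\mathcal F E_t=\emptyset$, and your key level-wise identity $|D\1_{E_t}-D\1_{S_t}|(\R^d)=P(E_t)-P(S_t)$ is false ($P(B_2)+P(B_1)$ versus $P(B_2)-P(B_1)$). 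So the middle step of your argument uses information internal to the proof of Lemma \ref{lemma:extraction} (and, even there, only for levels below the truncation parameter used in that proof), not its statement; as written, the derivation is incomplete.

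The repair is easy, and it is what the paper does: never compare $h$ with $f$ level-wise. From the hole decomposition of $R_t^c$ (your Proposition \ref{p-decomposition-th+} step, equivalently \cite[Prop.~9]{ACMM}) one gets $P(\{g>t\})=P(\{h>t\})+P(\{h>t\}\setminus\{g>t\})$ for a.e. $t$, and Lemma \ref{lemma:coarea1}(2) applied with $\varphi=h\ge\psi=g$ yields $\Vert g\Vert_{\FV}=\Vert h\Vert_{\FV}+\Vert h-g\Vert_{\FV}$; chaining this with $\Vert f\Vert_{\FV}=\Vert f-g\Vert_{\FV}+\Vert g\Vert_{\FV}$ and the triangle inequality gives (ii). Alternatively, your normal-alignment computation is valid for the pair $(g,h)$, since $\mathcal F S_t\subseteq\mathcal F R_t$ with equal normals does follow from the decomposition of $R_t^c$; this reproves $\Vert g\Vert_{\FV}=\Vert h\Vert_{\FV}+\Vert h-g\Vert_{\FV}$ directly and one concludes as above. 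If you insist on the direct comparison with $f$, you must first strengthen Lemma \ref{lemma:extraction} so that its statement records that $\{g>t\}$ is, for a.e. $t$ with $\{g>t\}$ nonempty, an $M$-connected component of $\{f>t\}$.
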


\begin{proof}
	First of all, we apply Lemma \ref{lemma:extraction} and we obtain a function $g \in \FV(\R^d)$ such that $G_t:=\{g>t\}$ is indecomposable for a.e. $t \ge 0$ and 
	\begin{equation}\label{eq:indec_first_step}
	\Vert f \Vert_{\FV} =\Vert f -g \Vert_{\FV} + \Vert g \Vert_{\FV}. 
	\end{equation}
	Let us now work on the function $g$. By construction of $g$, for a.e. $t \ge 0$ the set $G_t$ is indecomposable. Fix some $a>0$ such that $\meas{G_a}>0$ and $G_a$ is not simple (otherwise there is nothing to prove): let us denote by $\{F_t^i\}_{i\in I_t}$ the non-empty family of holes of $G_t$ (i.e. $\mathcal{CC}^M(\R^d \setminus G_t) = \{F_t^i\}_{i\in I_t}$). 
	
	Observe that, if $H$ is an hole of $G_a$, for any $t\in(a,+\infty) \setminus N$, we have $G_t \subseteq G_a$, and hence $G_t^c \supseteq G_a^c \supseteq H$: this means that $H$ is an hole of $G_t$ for any $t\in(a,+\infty) \setminus N$: by the uniqueness claim in Theorem~\ref{thm:ACMM} there exists a unique $j=j(t)$ such that $\meas{H \setminus F_t^{j(t)}} = 0$.
	
	For any $t \in (0,a)$ define $S_t := \text{sat} (G_t)$. Observe that the sequence $(S_t)_{ t \in (0,a)}$ is monotone \cite[Prop. 6(iii)]{ACMM} and thus, applying Lemma \ref{lemma:construction_function_from_superlevels}, we obtain a function $h \colon \R^d \to \R$ such that $\{h>r\}= S_r$ (up to Lebesgue negligible subsets) for a.e. $r \in (0,a)$. By construction the function $h$ is non-negative and $\{h>r\}$ is simple for a.e. $r \in (0,a)$, because the saturation of an indecomposable set is simple. It thus remains to show property \textit{(ii)} of the statement. For, 
	notice preliminarly, that $h-g \ge 0$ by construction of $h$; by \cite[Prop. 9]{ACMM}, it holds for any $t\in(a,+\infty) \setminus N$ 
	\begin{equation*}
	P(G_t) = P(\text{sat} (G_t)) + P\left (\bigcup_{i \in I_t} F_t^i \right) 
	\end{equation*}
	which can be also written as 
	\begin{equation*}
	P(\{g>t\}) = P(\{h>t\}) + P\left ( \{h>t\} \setminus \{g>t\} \right).
	\end{equation*}
	We are now in position to apply Lemma \ref{lemma:coarea1}, Point (ii), choosing $\varphi := h$ and $\psi := g$ (which is possible since $h \ge g$): we obtain 
	\begin{equation}\label{eq:indec_second_step}
	\| g \|_{\FV} = \| h \|_{\FV} + \|g-h \|_{\FV}. 
	\end{equation}
	It is now easy to check that property \textit{(ii)} follows combining \eqref{eq:indec_first_step} with \eqref{eq:indec_second_step} - and the triangle inequality: 
	\begin{equation*}
	\begin{split}
	\Vert f \Vert_{\FV} & \le \Vert f -h \Vert_{\FV} + \Vert h \Vert_{\FV} \\
	& \le \Vert f -g \Vert_{\FV} + \Vert g-h \Vert_{\FV} + \Vert h \Vert_{\FV} \\
	& \overset{\eqref{eq:indec_first_step}}{ = }  \Vert f \Vert_{\FV} - \|g \|_{\FV}+ \Vert g-h \Vert_{\FV} + \Vert h \Vert_{\FV} \\
	& \overset{\eqref{eq:indec_second_step}}{ = }  \Vert f \Vert_{\FV} - \|g \|_{\FV}+ \Vert g \Vert_{\FV}- \Vert h \Vert_{\FV} + \Vert h \Vert_{\FV}  = 	\Vert f \Vert_{\FV}
	\end{split}
	\end{equation*}
	and this completes the proof. 	
\end{proof}

\begin{lemma}[Extraction lemma III] \label{lemma:extraction_plus_plus} Let $f \in \FV(\R^d)$ and assume $f$ is not identically zero. Then there exists $m \in \FV(\R^d)$ with $m \not\equiv 0$ such that: 
	\begin{enumerate}
		\item[(i)] $m$ is monotone and $\sign m = \text{constant}$ a.e.; 
		\item[(ii)] it holds $\Vert f \Vert_{\FV} =\Vert f -m \Vert_{\FV} + \Vert m \Vert_{\FV}$. 
	\end{enumerate}
\end{lemma}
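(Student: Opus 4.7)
The plan is to reduce to the non-negative case already handled by Lemma \ref{lemma:extraction_plus}. I first decompose $f=f^+-f^-$ and invoke Lemma \ref{lemma:chin_rule} with $\lambda=0$ on an exhaustion of $\R^d$ by balls $B_n(0)$; letting $n\to\infty$ by monotone convergence of measures upgrades the identity to $V(f)=V(f^+)+V(f^-)$, so in particular $f^\pm\in\FV(\R^d)$. Since $f\not\equiv 0$, at least one of $f^\pm$ is nontrivial, and after possibly replacing $f$ by $-f$ (which flips the eventual sign of $m$ but preserves both (i) and (ii)) I may assume $f^+\not\equiv 0$. I then apply Lemma \ref{lemma:extraction_plus} to $f^+$ to obtain $h\in\FV(\R^d)$ with $h\ge 0$, $h\not\equiv 0$, $\{h>t\}$ simple for a.e. $t\ge 0$, and $\|f^+\|_{\FV}=\|f^+-h\|_{\FV}+\|h\|_{\FV}$, and I set $m:=h$.

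Property (i) is then essentially built into the construction: $m\ge 0$ gives constant sign, and its superlevel sets $\{m>t\}$ are simple for a.e. $t\in\R$ (they equal $\R^d$ for $t<0$, which is simple by the comment following Definition \ref{def:simple}; they are empty above the construction threshold since $h$ is bounded by that threshold; and they are simple for a.e. intermediate $t$ by Lemma \ref{lemma:extraction_plus}), so by the remark preceding Theorem \ref{thm:BT} the function $m$ is monotone. For property (ii), I plan a two-sided sandwich. The triangle inequality applied to the decomposition $f-m=(f^+-h)-f^-$ yields $V(f-m)\le V(f^+-h)+V(f^-)$, while $f=(f-m)+m$ yields $V(f-m)\ge V(f)-V(m)$. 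Combining these two bounds with $V(f)=V(f^+)+V(f^-)$ and $V(f^+)=V(f^+-h)+V(h)$ gives
\[V(f)-V(m)\;\le\;V(f-m)\;\le\;V(f^+-h)+V(f^-)\;=\;V(f)-V(m),\]
so equality holds throughout and $\|f\|_{\FV}=\|f-m\|_{\FV}+\|m\|_{\FV}$.

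The main obstacle I anticipate is that the function $h$ produced by Lemma \ref{lemma:extraction_plus} is constructed via saturation of $M$-connected components of the superlevel sets of $f^+$, and this saturation may extend beyond the support of $f^+$ into regions where $f$ is negative; in particular the naive pointwise bound $h\le f^+$ can fail. This would derail any attempt to compute $V(f-h)$ directly by disjoint-support reasoning on $(f^+-h)$ and $f^-$. The sandwich above is designed precisely to sidestep this difficulty by using only the triangle inequality together with the norm identities already secured by Lemmas \ref{lemma:chin_rule} and \ref{lemma:extraction_plus}, without any pointwise comparison between $h$ and $f^+$.
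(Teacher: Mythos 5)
Your proof is correct and takes essentially the same route as the paper's: split $f=f^+-f^-$, use Lemma \ref{lemma:chin_rule} to get $\|f\|_{\FV}=\|f^+\|_{\FV}+\|f^-\|_{\FV}$, extract $m$ from the nontrivial part, and close the norm identity via a triangle-inequality sandwich. The only (cosmetic) difference is that you apply Lemma \ref{lemma:extraction_plus} directly to $f^+$, whereas the paper first invokes Lemma \ref{lemma:extraction} to produce an intermediate $u$ and then applies Lemma \ref{lemma:extraction_plus} to $u$; since Lemma \ref{lemma:extraction_plus} already begins by applying Lemma \ref{lemma:extraction} internally, your version removes a redundant step but otherwise coincides with the paper's argument, and your remark about why $h\le f^+$ may fail (saturation can enlarge the superlevel sets) correctly identifies why the sandwich, rather than a pointwise estimate, is needed.
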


\begin{proof}
	Let us decompose $f = f^+ - f^-$. Suppose $\|f^+\|_{\FV}>0$. Since $f^+ \ge 0$ we can apply Lemma \ref{lemma:extraction} to $f^+$, thus obtaining a function $u \ge 0$ such that $\{u > t\}$ is indecomposable for a.e. $t >0$ and it holds 
	\begin{equation}\label{eq:mono1}
	\Vert f^+ \Vert_{\FV} =\Vert f^+ -u \Vert_{\FV} + \Vert u \Vert_{\FV}.
	\end{equation} 
	Applying now Lemma \ref{lemma:extraction_plus} to $u \ge 0$ we obtain a function $m \in \FV(\R^d)$ such that for a.e. $t \ge 0$ the set $\{m > t\}$ is simple and it holds
	\begin{equation}\label{eq:mono2}
	\Vert u \Vert_{\FV} =\Vert u -m \Vert_{\FV} + \Vert m \Vert_{\FV}.
	\end{equation}
	By triangle inequality 
	\begin{equation*}
	\begin{split}
	\Vert f \Vert_{\FV} & \le \Vert f  - m  \|_{\FV} + \| m \|_{\FV} \\ 
	& \le \Vert f^+ - m  \|_{\FV} + \| f^- \|_{\FV} + \| m \|_{\FV} \\ 
	& \le \| f^+ - u  \|_{\FV} + \| u - m  \|_{\FV} + \| f^- \|_{\FV} + \| m \|_{\FV} \\ 
	&  \overset{\eqref{eq:mono2}}{ = } \| f^+ - u  \|_{\FV} + \| u  \|_{\FV} + \| f^- \|_{\FV} \\
	&  \overset{\eqref{eq:mono1}}{ = } \| f^+ \|_{\FV} + \| f^- \|_{\FV}  = \Vert f \Vert_{\FV}
	\end{split}
	\end{equation*}
	hence Property (ii) holds true. Since the function $m$ is monotone, this concludes the proof in the case $\| f^+\|_{\FV} > 0$.
	It remains to consider the case in which $f^+ \equiv 0$. If $f^- \equiv 0$ there is nothing to prove; if $\| f^- \|_{\FV} >0$ then we repeat the same argument above for the function $\widetilde{f}:=-f \in \FV(\R^d)$. We end up with a monotone function $\widetilde{m}$ of constant sign such that 
	\begin{equation*}
	\Vert \widetilde f \Vert_{\FV} =\Vert \widetilde f -\widetilde m \Vert_{\FV} + \Vert \widetilde m \Vert_{\FV}
	\end{equation*}
	which is clearly equivalent to Property (ii) (renaming $-\widetilde m$ as $m$). 
	
\end{proof}

Now we prove Theorem \ref{thm:BT} using Lemma~\ref{lemma:extraction_plus_plus} and transfinite induction:

\begin{proof}[Proof of Theorem \ref{thm:BT}]
Let $X:=\{g\in \FV(\R^d) : \text{$g$ is monotone and $\|g\|_{\FV} > 0$}\}$.
For any $h \in \FV(\R^d)$ let $Y(h) := \{g \in X : \|h\|_{\FV} = \|h-g\|_{\FV} + \|g\|_{\FV}\}$.
Note that by Lemma~\ref{lemma:extraction_plus_plus} $Y(h) = \emptyset$ if and only if $h\equiv 0$.
Ultimately let $\mathfrak{s}\colon \mathscr{P}(\FV(\R^d)) \to \FV(\R^d)$ denote a choice function (given by Axiom of Choice).

For any ordinal $\alpha < \omega_1$ (where $\omega_1$ is the first uncountable ordinal) and any transfinite sequence $\{g_\xi\}_{\xi < \alpha} \subset X \cup \{{{\infty}}\}$ let us define
\begin{equation*}
E(\{g_\xi\}_{\xi<\alpha}) := \begin{cases}
{{\infty}}, & \text{if ${{\infty}} \in \{g_\xi\}_{\xi < \alpha}$ or if $\sum_{\xi < \alpha} \|g_\xi\|_{\FV} = \infty$}; \\
\mathfrak{s}(Y(f - \sum_{\xi < \alpha} g_\xi)), & \text{if $\sum_{\xi < \alpha} \|g_\xi\|_{\FV} < \infty$ and $Y(f - \sum_{\xi < \alpha} g_\xi) \ne \emptyset$;} \\
0, & \text{if $\sum_{\xi < \alpha} \|g_\xi\|_{\FV} < \infty$ and $Y(f - \sum_{\xi < \alpha} g_\xi) = \emptyset$}.
\end{cases}
\end{equation*}
By transfinite recursion (see e.g. \cite[p. 21]{jech}) there exists a transfinite sequence $\{g_\alpha\}_{\alpha<\omega_1}$ such that $g_\alpha = E(\{g_\xi\}_{\xi < \alpha})$ for any $\alpha < \omega_1$.

Note that for any $\alpha < \omega_1$ the following properties hold:
\begin{subequations}
\begin{equation}\label{e-transfin-1}
{{\infty}} \notin \{g_\xi\}_{\xi < \alpha},
\end{equation}
\begin{equation}\label{e-transfin-2}
\sum_{\xi < \alpha} \|g_\xi\|_{\FV} \le \|f\|_{\FV},
\end{equation}
\begin{equation}\label{e-transfin-3}
\|f\|_{\FV} = \|f - \sum_{\xi < \alpha} g_\xi \|_{\FV} + \sum_{\xi < \alpha} \|g_\xi\|_{\FV}.
\end{equation}
\end{subequations}
Observe that \eqref{e-transfin-2} follows from \eqref{e-transfin-3}, but without \eqref{e-transfin-2} the term $\sum_{\xi < \alpha} g_\xi$ in \eqref{e-transfin-1} is not well-defined. Indeed, these properties trivially hold for $\alpha = 0$. Let $\beta <\omega_1$ and suppose that these properties hold for any $\alpha < \beta$. In order to show that \eqref{e-transfin-1}--\eqref{e-transfin-3} hold with $\alpha=\beta$ we consider two cases.

First, if $\beta$ is not a limit ordinal, then $\beta = \gamma + 1$ for some ordinal $\gamma$, so by definition of $\{g_\xi\}_{\xi < \omega_1}$ we have $g_{\gamma+1} = \mathfrak{s}(Y(f - \sum_{\xi < \gamma} g_\xi))$. Hence
\begin{equation*}
\|f\|_{\FV}=\|f-\sum_{\xi<\gamma} g_\xi\|_{\FV} + \sum_{\xi<\gamma}\|g_\xi\|_{\FV}
= \|f-\sum_{\xi<\gamma} g_\xi - g_{\gamma}\|_{\FV} + \|g_{\gamma}\|_{\FV} + \sum_{\xi<\gamma}\|g_\xi\|_{\FV}
\end{equation*}
and it follows that \eqref{e-transfin-1}--\eqref{e-transfin-3} hold with $\alpha=\gamma+1$.

Second, if $\beta$ is a limit ordinal then $\beta = \bigcup_{\alpha < \beta} \alpha$. Consequently
$\{g_\xi\}_{\xi < \beta} = \bigcup_{\alpha < \beta} \{g_\xi\}_{\xi < \alpha}$, hence \eqref{e-transfin-1} holds with $\alpha = \beta$. Furthermore, since $\beta$ is at most countable we can enumerate it as $\beta = \{\alpha_n\}_{n\in \N}$. Let $A_n := \alpha_1 \cup \ldots \cup \alpha_n$ (note that for any $n\in \N$ there exists $m\in \{1,\ldots, n\}$ such that $A_n = \alpha_m$). Since $\beta = \bigcup_{\alpha < \beta} \alpha = \bigcup_{n\in \N} A_n$ we have
\begin{equation*}
\sum_{\xi < \beta} \|g_{\xi}\|_{\FV} = \sum_{\xi < \beta} (\sup_{n\in\N} \1_{A_n}(\xi)) \|g_{\xi}\|_{\FV} = \sup_{n\in \N} \sum_{\xi \in A_n} \|g_{\xi}\|_{\FV} \le \sup_{\alpha < \beta} \|g_{\xi}\|_{\FV} \le \|f\|_{\FV},
\end{equation*}
hence \eqref{e-transfin-2} holds with $\alpha = \beta$. Consequently $\sum_{\xi < \beta} g_\xi = \lim_{n\to\infty} \sum_{\xi \in A_n} g_\xi$ and $\sum_{\xi < \beta} \|g_{\xi}\|_{\FV} = \lim_{n\to\infty} \sum_{\xi \in A_n} \|g_{\xi}\|_{\FV}$. Writing \eqref{e-transfin-3} with $\alpha = A_n$ and passing to the limit as $n\to \infty$ we conclude that \eqref{e-transfin-3} holds with $\alpha = \beta$.

We have thus shown that \eqref{e-transfin-1}--\eqref{e-transfin-3} hold with $\alpha=\beta$.
Hence by transfinite induction \eqref{e-transfin-1}--\eqref{e-transfin-3} hold for any $\alpha < \omega_1$.

By \eqref{e-transfin-2} for any $\e >0$ the set 
\begin{equation*}
\{\alpha< \omega_1 : \Vert g_\alpha \Vert_{\FV}>\e   \}
\end{equation*}
is finite and thus the set 
\begin{equation*}
A:=\{\alpha< \omega_1 : \Vert g_\alpha \Vert_{\FV}>0 \}
\end{equation*}
is at most countable. Setting $\gamma := \sup A$ we have $g_{\gamma +1} = 0$.
As already noted above, by Lemma~\ref{lemma:extraction_plus_plus} this means that $f = \sum_{\xi<\gamma} g_\xi$, and $\|f\|_{\FV} = \sum_{\xi < \gamma} \|g_\xi\|_{\FV}$ by \eqref{e-transfin-3}.

By triangle inequality $|Df| \le \sum_{\xi < \gamma} |D g_\xi|$.
If this inequality were strict, we would have $\|f\|_{\FV} = |Df|(\R^d) < \sum_{\xi < \gamma} |D g_\xi|(\R^d) = \sum_{\xi < \gamma} \|g_\xi\|_{\FV} = \|f\|_{\FV}$, which is a contradiction.
\end{proof}

\section*{Acknowledgements}
The first author was supported by ERC Starting Grant 676675 FLIRT. He thanks Graziano Crasta and Annalisa Malusa for interesting discussions during the preparation of the paper; he also kindly acknowledges Gian Paolo Leonardi for introducing him to the problem of rigidity of divergence-free vector fields. This project has received funding from the European Research Council (ERC) under the European Union's Horizon 2020 research and innovation programme under grant agreement No 757254 (SINGULARITY).
The work of the second author was supported by RFBR Grant 18-31-00279.
	
	\bibliographystyle{alpha}
	\bibliography{biblio}

\end{document}